\documentclass[a4paper,reqno,12pt,oneside]{amsart}
\usepackage{amsmath}
\usepackage{graphicx}
\usepackage{mathrsfs}
\usepackage{amssymb,amsmath, amsfonts, amsthm}
\usepackage{latexsym}
\usepackage{color} 
\usepackage[dvips]{epsfig}
\usepackage[colorlinks]{hyperref}
\usepackage{tikz}
\usepackage{pgfplots}
\usepackage[hmargin=2.8cm, vmargin=2.8cm]{geometry}
 \allowdisplaybreaks

\numberwithin{equation}{section}

\newtheorem{thm}{Theorem}[section]

\newtheorem{defi}[thm]{Definition}
\newtheorem{prop}[thm]{Proposition}
\newtheorem{lm}[thm]{Lemma}
\newtheorem{rem}[thm]{Remark}

\newcommand\lap{\Delta}

\newcommand\be{\beta}
\newcommand\ga{\gamma}
\newcommand\al{\alpha}

\title[Entire solutions in a strongly competitive regime]{Entire solutions to a strongly competitive nonlinear Schrödinger system}

\author{Pierpaolo Esposito}
\address{Pierpaolo Esposito, Dipartimento di Matematica e Fisica, Universit\`a degli Studi Roma Tre, Largo S.~Leonardo Murialdo 1, Roma 00146, Italy.}
\email{pierpaolo.esposito@uniroma3.it}

\author{Pablo Figueroa}
\address{Pablo Figueroa, Instituto de Ciencias Físicas y Matemáticas, Facultad de  Ciencias, Universidad Austral de Chile, Campus Isla Teja s/n, Valdivia, Chile}
\email{pablo.figueroa@uach.cl}

\author{Angela Pistoia}
\address{Angela Pistoia, Dipartimento SBAI, Universit\`a di Roma ``La Sapienza'',  via Antonio Scarpa 16, 00161 Roma, Italy}
\email{angela.pistoia@uniroma1.it}

\author{Giusi Vaira}
\address{Giusi Vaira,
Dipartimento di Matematica, Universit\`a degli studi di Bari ``Aldo Moro'', via Edoardo Orabona 4,70125 Bari, Italy}
\email{giusi.vaira@uniba.it}

\begin{document}

\begin{abstract} 
%
%
%
We build infinitely-many non-radial positive solutions to the Schrödinger system 
\begin{equation*} 
\left\{\begin{aligned}
&-\Delta u_1+u_1=u_1^{{\mathfrak p} }-\Lambda u_1^{a_1} u_2^{a_2}\ \hbox{in}\ \mathbb R^N\\
&-\Delta u_2+u_2=u_2^{{\mathfrak p} }-\Lambda u_1^{b_1}u_2^{b_2} \ \hbox{in}\ \mathbb R^N\\
\end{aligned}\right.
\end{equation*}
with sub-critical $\mathfrak p$-growth as $\Lambda \to +\infty$. The profile of each component is the sum of several copies of the positive solution to $-\Delta U+U=U^{{\mathfrak p} }$ in $\mathbb R^N$, 
centered at suitable {\em peaks} whose mutual distances diverge as $\Lambda$ increases.  More precisely,  given two concentric regular polygons with $k$ sides and very large radii,  the peaks of the first component are arranged  
along the edges of the {\em outer} polygon, alternated with those of the second component, and along the $k$ rays joining the vertices of the  two polygons. 
To the best of our knowledge, this provides the first example of non-radial positive solutions for strongly competitive Schr\"odinger systems in the whole space.\end{abstract}
\keywords{Elliptic systems,   non-radial solutions, segregation phenomena, balancing condition.}

\subjclass[2020]{35J47,   35B06.}
\maketitle

\section{Introduction}

In this paper we investigate existence issues for the non-linear Schrödinger system
\begin{equation} \label{ests}
\left\{\begin{aligned}
&-\Delta u_1+u_1=u_1^{{\mathfrak p} }-\Lambda u_1^{a_1} u_2^{a_2}\ \hbox{in}\ \mathbb R^N\\
&-\Delta u_2+u_2=u_2^{{\mathfrak p} }-\Lambda u_1^{b_1}u_2^{b_2} \ \hbox{in}\ \mathbb R^N\\
\end{aligned}\right.
\end{equation}
with $N\ge2$, ${\mathfrak p}>1$, $a_i,b_i>0$ and $\Lambda\in\mathbb R.$
\\

Except for some very special choices of the exponents $a_i$ and $b_i$, system \eqref{ests} does not possess a variational structure. While an extensive literature is available for variational Schr\"odinger systems, both in bounded domains and in the whole space, much less is known in the genuinely non-variational setting.

A particularly relevant case arises when  ${\mathfrak p}=3$ and $a_1=b_2=a_2-1=b_1-1=1$ in which case \eqref{ests} reduces to 
\begin{equation} \label{bose}
\left\{\begin{aligned}
&-\Delta u_1+u_1=u_1^{3}-\Lambda u _1u_2^{ 2}\ \hbox{in}\ \mathbb R^N\\
&-\Delta u_2+u_2=u_2^{3}-\Lambda u_1^{2}u_2  \ \hbox{in}\ \mathbb R^N,\\
\end{aligned}\right.
\end{equation}
which arises  in the study of  Bose-Einstein condensates 
 This system has been widely studied over the last decades, especially on bounded domains or in the presence of external potentials, where the linear terms are replaced by non-autonomous ones of the form $V_i(x)u_i$. We refer to the recent paper \cite{li-wei-wu} by Li, Wei and Wu for an exhaustive list of references.
\\
An interesting phenomenon emerges in the strongly competitive regime $\Lambda \to +\infty$: the two components tend to segregate, meaning that their supports become asymptotically disjoint. This phenomenon, known as \emph{phase separation}, was first investigated by Conti, Terracini and Verzini \cite{conti-terracini-verzini2,conti-terracini-verzini} in the context of least-energy solutions. In particular, Wei and Weth \cite{wei-weth} proved that system \eqref{bose} admits a radial solution $(u_1,u_2)$ such that, as $\Lambda \to +\infty$, the difference $u_1-u_2$ converges to a sign-changing radial solution $W$ of the limit equation
$$- \Delta W + W = W^3 \quad \text{in } \mathbb{R}^N.$$
More precisely, the two components converge to the positive and negative parts of $W$, respectively. This result was extended by Terracini and Verzini \cite{terracini-verzini} to systems with $k$ components, where each component segregates and converges to one nodal region of a radial solution with $k$ nodal domains, as constructed by Bartsch and Willem  \cite{bartsch-willem}.

Motivated by this close connection between positive solutions of competitive systems and sign-changing solutions of the corresponding scalar equation, a natural question arises.

\medskip
\noindent
\textbf{(Q1)} \emph{Given a sign-changing solution $W$ of}
\begin{equation}
\label{lim-nod-2}
- \Delta W + W = |W|^{p-1} W \quad \text{in } \mathbb{R}^N,
\end{equation}
\emph{does there exist a positive solution $(u_1,u_2)$ of \eqref{ests} such that $u_1$ and $u_2$ resemble $W^+$ and $W^-$ as $\Lambda \to +\infty$?}

\medskip
In the radial case, this question has been answered positively in  \cite{terracini-verzini,wei-weth}. In the non-radial setting, however, the problem remains widely open and appears to be extremely challenging. In the present work we address a more flexible version of this question.

\medskip
\noindent
\textbf{(Q2)} \emph{Given a sign-changing approximate solution $W^\ast$ of \eqref{lim-nod-2}, can one find a positive solution $(u_1,u_2)$ of \eqref{ests} such that $u_1$ and $u_2$ resemble $(W^\ast)^+$ and $(W^\ast)^-$ as $\Lambda \to +\infty$?}

\medskip
The aim of this paper is to provide a positive answer to \textbf{(Q2)} when $W^\ast$ is the approximate solution introduced by Pacard, Musso and Wei \cite{musso-pacard-wei} in their construction of non-radial sign-changing solutions to \eqref{lim-nod-2}. These solutions are invariant under rotations of angle $2\pi/k$ in the $(x_1,x_2)$--plane for some $k \ge 7$.\\
Let us  be more precise.
Given $U \in H^1(\mathbb{R}^N)$ the unique positive radial solution of
\begin{equation}\label{bubble}-\Delta U+U=U^{{\mathfrak p}} \qquad \hbox{in }\mathbb{R}^N,\end{equation}
via the correspondence $W^+ \leftrightarrow u_1$ and $W^-\leftrightarrow u_2$, the approximate solution $W^*$ to \eqref{lim-nod-2} in \cite{musso-pacard-wei} reads as an approximate solution $(U_1^*,U_2^*)$ to \eqref{ests}, where $U_1^*$ and $U_2^*$ (defined in \eqref{uistar}) are sum of several (positive) copies of $U$ centered at the positive and negative peaks of $W^*$ as in Figure \ref{figure}.

\tikzset{every picture/.style={line width=0.75pt}} 

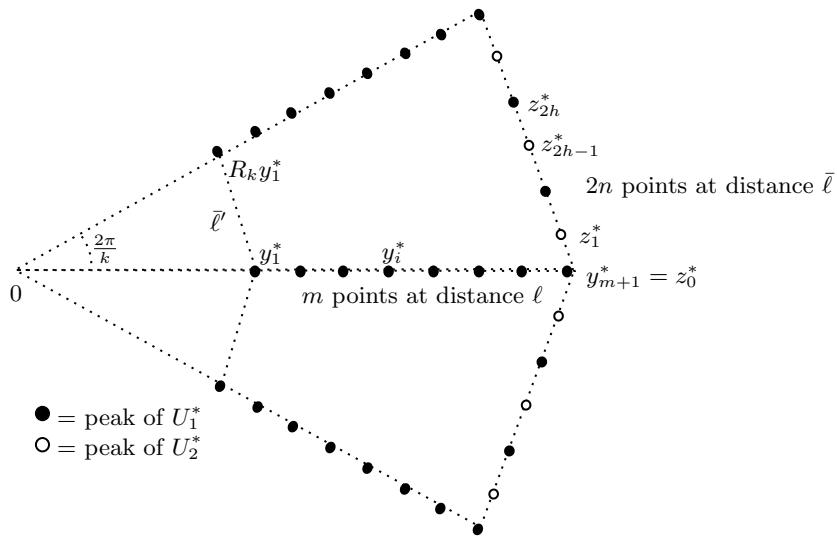
\begin{figure}[ht]
    \centering
 
\begin{tikzpicture}[x=0.75pt,y=0.75pt,yscale=-1,xscale=1]\label{figure}

\draw  [dash pattern={on 0.84pt off 2.51pt}] (351.09,14.67) -- (398.12,145.47) -- (118.81,144.98) -- cycle ;
\draw  [dash pattern={on 0.84pt off 2.51pt}] (351.13,274.25) -- (398.13,144.48) -- (118.81,144.97) -- cycle ;
\draw  [fill={rgb, 255:red, 0; green, 0; blue, 0 }  ,fill opacity=1 ] (236.42,145.69) .. controls (236.42,144.3) and (237.36,143.18) .. (238.53,143.18) .. controls (239.7,143.18) and (240.65,144.3) .. (240.65,145.69) .. controls (240.65,147.08) and (239.7,148.2) .. (238.53,148.2) .. controls (237.36,148.2) and (236.42,147.08) .. (236.42,145.69) -- cycle ;
\draw  [fill={rgb, 255:red, 0; green, 0; blue, 0 }  ,fill opacity=1 ] (259.11,145.69) .. controls (259.11,144.3) and (260.06,143.18) .. (261.22,143.18) .. controls (262.39,143.18) and (263.34,144.3) .. (263.34,145.69) .. controls (263.34,147.08) and (262.39,148.2) .. (261.22,148.2) .. controls (260.06,148.2) and (259.11,147.08) .. (259.11,145.69) -- cycle ;
\draw  [fill={rgb, 255:red, 0; green, 0; blue, 0 }  ,fill opacity=1 ] (280.43,145.69) .. controls (280.43,144.3) and (281.37,143.18) .. (282.54,143.18) .. controls (283.71,143.18) and (284.66,144.3) .. (284.66,145.69) .. controls (284.66,147.08) and (283.71,148.2) .. (282.54,148.2) .. controls (281.37,148.2) and (280.43,147.08) .. (280.43,145.69) -- cycle ;
\draw  [fill={rgb, 255:red, 0; green, 0; blue, 0 }  ,fill opacity=1 ] (303.12,145.69) .. controls (303.12,144.3) and (304.07,143.18) .. (305.23,143.18) .. controls (306.4,143.18) and (307.35,144.3) .. (307.35,145.69) .. controls (307.35,147.08) and (306.4,148.2) .. (305.23,148.2) .. controls (304.07,148.2) and (303.12,147.08) .. (303.12,145.69) -- cycle ;

\draw  [fill={rgb, 255:red, 0; green, 0; blue, 0 }  ,fill opacity=1 ] (325.54,145.69) .. controls (325.54,144.3) and (326.48,143.18) .. (327.65,143.18) .. controls (328.82,143.18) and (329.77,144.3) .. (329.77,145.69) .. controls (329.77,147.08) and (328.82,148.2) .. (327.65,148.2) .. controls (326.48,148.2) and (325.54,147.08) .. (325.54,145.69) -- cycle ;
\draw  [fill={rgb, 255:red, 0; green, 0; blue, 0 }  ,fill opacity=1 ] (348.23,145.69) .. controls (348.23,144.3) and (349.18,143.18) .. (350.34,143.18) .. controls (351.51,143.18) and (352.46,144.3) .. (352.46,145.69) .. controls (352.46,147.08) and (351.51,148.2) .. (350.34,148.2) .. controls (349.18,148.2) and (348.23,147.08) .. (348.23,145.69) -- cycle ;
\draw  [fill={rgb, 255:red, 0; green, 0; blue, 0 }  ,fill opacity=1 ] (369.55,145.69) .. controls (369.55,144.3) and (370.49,143.18) .. (371.66,143.18) .. controls (372.83,143.18) and (373.78,144.3) .. (373.78,145.69) .. controls (373.78,147.08) and (372.83,148.2) .. (371.66,148.2) .. controls (370.49,148.2) and (369.55,147.08) .. (369.55,145.69) -- cycle ;
\draw  [fill={rgb, 255:red, 0; green, 0; blue, 0 }  ,fill opacity=1 ] (392.24,145.69) .. controls (392.24,144.3) and (393.19,143.18) .. (394.36,143.18) .. controls (395.52,143.18) and (396.47,144.3) .. (396.47,145.69) .. controls (396.47,147.08) and (395.52,148.2) .. (394.36,148.2) .. controls (393.19,148.2) and (392.24,147.08) .. (392.24,145.69) -- cycle ;

\draw  [fill={rgb, 255:red, 255; green, 255; blue, 255 }  ,fill opacity=1 ] (358.59,35.26) .. controls (359.7,34.84) and (360.94,35.47) .. (361.36,36.66) .. controls (361.78,37.85) and (361.23,39.15) .. (360.12,39.57) .. controls (359.01,39.99) and (357.77,39.36) .. (357.34,38.17) .. controls (356.92,36.98) and (357.48,35.68) .. (358.59,35.26) -- cycle ;
\draw  [fill={rgb, 255:red, 0; green, 0; blue, 0 }  ,fill opacity=1 ] (366.82,58.39) .. controls (367.93,57.97) and (369.17,58.6) .. (369.59,59.79) .. controls (370.02,60.98) and (369.46,62.28) .. (368.35,62.7) .. controls (367.24,63.11) and (366,62.49) .. (365.57,61.3) .. controls (365.15,60.11) and (365.71,58.8) .. (366.82,58.39) -- cycle ;
\draw  [fill={rgb, 255:red, 255; green, 255; blue, 255 }  ,fill opacity=1 ] (374.55,80.11) .. controls (375.66,79.69) and (376.9,80.32) .. (377.33,81.51) .. controls (377.75,82.7) and (377.19,84) .. (376.08,84.42) .. controls (374.97,84.84) and (373.73,84.21) .. (373.31,83.02) .. controls (372.88,81.83) and (373.44,80.53) .. (374.55,80.11) -- cycle ;
\draw  [fill={rgb, 255:red, 0; green, 0; blue, 0 }  ,fill opacity=1 ] (382.78,103.24) .. controls (383.89,102.82) and (385.14,103.45) .. (385.56,104.64) .. controls (385.98,105.83) and (385.43,107.13) .. (384.32,107.55) .. controls (383.21,107.96) and (381.96,107.34) .. (381.54,106.15) .. controls (381.12,104.96) and (381.67,103.66) .. (382.78,103.24) -- cycle ;
\draw  [fill={rgb, 255:red, 255; green, 255; blue, 255 }  ,fill opacity=1 ] (390.52,124.96) .. controls (391.63,124.55) and (392.87,125.17) .. (393.29,126.36) .. controls (393.72,127.55) and (393.16,128.85) .. (392.05,129.27) .. controls (390.94,129.69) and (389.7,129.06) .. (389.27,127.87) .. controls (388.85,126.68) and (389.41,125.38) .. (390.52,124.96) -- cycle ;

\draw  [fill={rgb, 255:red, 0; green, 0; blue, 0 }  ,fill opacity=1 ] (219.36,202.42) .. controls (220.01,201.2) and (221.31,200.65) .. (222.28,201.19) .. controls (223.24,201.73) and (223.5,203.16) .. (222.85,204.38) .. controls (222.21,205.6) and (220.9,206.15) .. (219.94,205.61) .. controls (218.97,205.07) and (218.72,203.64) .. (219.36,202.42) -- cycle ;
\draw  [fill={rgb, 255:red, 0; green, 0; blue, 0 }  ,fill opacity=1 ] (238.11,212.91) .. controls (238.75,211.7) and (240.06,211.14) .. (241.02,211.68) .. controls (241.99,212.22) and (242.24,213.65) .. (241.6,214.87) .. controls (240.95,216.09) and (239.65,216.64) .. (238.68,216.1) .. controls (237.72,215.56) and (237.46,214.13) .. (238.11,212.91) -- cycle ;
\draw  [fill={rgb, 255:red, 0; green, 0; blue, 0 }  ,fill opacity=1 ] (255.71,222.77) .. controls (256.36,221.55) and (257.66,221) .. (258.63,221.54) .. controls (259.59,222.08) and (259.85,223.51) .. (259.2,224.72) .. controls (258.56,225.94) and (257.25,226.49) .. (256.29,225.95) .. controls (255.32,225.41) and (255.07,223.99) .. (255.71,222.77) -- cycle ;
\draw  [fill={rgb, 255:red, 0; green, 0; blue, 0 }  ,fill opacity=1 ] (274.45,233.26) .. controls (275.1,232.04) and (276.41,231.49) .. (277.37,232.03) .. controls (278.33,232.57) and (278.59,234) .. (277.95,235.22) .. controls (277.3,236.43) and (275.99,236.98) .. (275.03,236.44) .. controls (274.07,235.9) and (273.81,234.48) .. (274.45,233.26) -- cycle ;

\draw  [fill={rgb, 255:red, 0; green, 0; blue, 0 }  ,fill opacity=1 ] (292.97,243.62) .. controls (293.62,242.41) and (294.92,241.86) .. (295.89,242.4) .. controls (296.85,242.93) and (297.11,244.36) .. (296.46,245.58) .. controls (295.82,246.8) and (294.51,247.35) .. (293.55,246.81) .. controls (292.58,246.27) and (292.32,244.84) .. (292.97,243.62) -- cycle ;
\draw  [fill={rgb, 255:red, 0; green, 0; blue, 0 }  ,fill opacity=1 ] (311.71,254.12) .. controls (312.36,252.9) and (313.66,252.35) .. (314.63,252.89) .. controls (315.59,253.43) and (315.85,254.85) .. (315.2,256.07) .. controls (314.56,257.29) and (313.25,257.84) .. (312.29,257.3) .. controls (311.32,256.76) and (311.07,255.33) .. (311.71,254.12) -- cycle ;
\draw  [fill={rgb, 255:red, 0; green, 0; blue, 0 }  ,fill opacity=1 ] (329.32,263.97) .. controls (329.97,262.75) and (331.27,262.2) .. (332.23,262.74) .. controls (333.2,263.28) and (333.46,264.71) .. (332.81,265.93) .. controls (332.16,267.14) and (330.86,267.69) .. (329.9,267.16) .. controls (328.93,266.62) and (328.67,265.19) .. (329.32,263.97) -- cycle ;
\draw  [fill={rgb, 255:red, 0; green, 0; blue, 0 }  ,fill opacity=1 ] (348.06,274.46) .. controls (348.71,273.24) and (350.01,272.69) .. (350.98,273.23) .. controls (351.94,273.77) and (352.2,275.2) .. (351.55,276.42) .. controls (350.91,277.64) and (349.6,278.19) .. (348.64,277.65) .. controls (347.67,277.11) and (347.42,275.68) .. (348.06,274.46) -- cycle ;

\draw  [fill={rgb, 255:red, 0; green, 0; blue, 0 }  ,fill opacity=1 ] (218.01,86.37) .. controls (217.4,85.14) and (217.69,83.72) .. (218.67,83.2) .. controls (219.65,82.68) and (220.94,83.27) .. (221.56,84.5) .. controls (222.17,85.74) and (221.88,87.16) .. (220.9,87.67) .. controls (219.92,88.19) and (218.63,87.61) .. (218.01,86.37) -- cycle ;
\draw  [fill={rgb, 255:red, 0; green, 0; blue, 0 }  ,fill opacity=1 ] (237.04,76.33) .. controls (236.43,75.1) and (236.72,73.68) .. (237.7,73.16) .. controls (238.68,72.65) and (239.97,73.23) .. (240.59,74.46) .. controls (241.21,75.7) and (240.91,77.12) .. (239.93,77.63) .. controls (238.96,78.15) and (237.66,77.57) .. (237.04,76.33) -- cycle ;
\draw  [fill={rgb, 255:red, 0; green, 0; blue, 0 }  ,fill opacity=1 ] (254.92,66.9) .. controls (254.31,65.67) and (254.6,64.25) .. (255.58,63.73) .. controls (256.56,63.22) and (257.85,63.8) .. (258.47,65.03) .. controls (259.09,66.27) and (258.79,67.69) .. (257.81,68.21) .. controls (256.83,68.72) and (255.54,68.14) .. (254.92,66.9) -- cycle ;
\draw  [fill={rgb, 255:red, 0; green, 0; blue, 0 }  ,fill opacity=1 ] (273.96,56.87) .. controls (273.34,55.63) and (273.63,54.21) .. (274.61,53.7) .. controls (275.59,53.18) and (276.89,53.76) .. (277.5,55) .. controls (278.12,56.23) and (277.83,57.65) .. (276.85,58.17) .. controls (275.87,58.68) and (274.57,58.1) .. (273.96,56.87) -- cycle ;

\draw  [fill={rgb, 255:red, 0; green, 0; blue, 0 }  ,fill opacity=1 ] (292.76,46.95) .. controls (292.14,45.72) and (292.44,44.3) .. (293.42,43.78) .. controls (294.4,43.26) and (295.69,43.85) .. (296.31,45.08) .. controls (296.92,46.31) and (296.63,47.73) .. (295.65,48.25) .. controls (294.67,48.77) and (293.38,48.19) .. (292.76,46.95) -- cycle ;
\draw  [fill={rgb, 255:red, 0; green, 0; blue, 0 }  ,fill opacity=1 ] (311.79,36.91) .. controls (311.18,35.68) and (311.47,34.26) .. (312.45,33.74) .. controls (313.43,33.23) and (314.72,33.81) .. (315.34,35.04) .. controls (315.96,36.28) and (315.66,37.7) .. (314.68,38.21) .. controls (313.7,38.73) and (312.41,38.15) .. (311.79,36.91) -- cycle ;
\draw  [fill={rgb, 255:red, 0; green, 0; blue, 0 }  ,fill opacity=1 ] (329.67,27.48) .. controls (329.06,26.25) and (329.35,24.83) .. (330.33,24.31) .. controls (331.31,23.8) and (332.6,24.38) .. (333.22,25.61) .. controls (333.83,26.85) and (333.54,28.27) .. (332.56,28.78) .. controls (331.58,29.3) and (330.29,28.72) .. (329.67,27.48) -- cycle ;
\draw  [fill={rgb, 255:red, 0; green, 0; blue, 0 }  ,fill opacity=1 ] (348.7,17.45) .. controls (348.09,16.21) and (348.38,14.79) .. (349.36,14.27) .. controls (350.34,13.76) and (351.63,14.34) .. (352.25,15.58) .. controls (352.87,16.81) and (352.57,18.23) .. (351.59,18.75) .. controls (350.62,19.26) and (349.32,18.68) .. (348.7,17.45) -- cycle ;

\draw  [fill={rgb, 255:red, 255; green, 255; blue, 255 }  ,fill opacity=1 ] (390.86,165.99) .. controls (391.97,166.41) and (392.52,167.71) .. (392.09,168.9) .. controls (391.66,170.09) and (390.42,170.71) .. (389.31,170.28) .. controls (388.2,169.86) and (387.65,168.56) .. (388.08,167.37) .. controls (388.51,166.18) and (389.76,165.56) .. (390.86,165.99) -- cycle ;
\draw  [fill={rgb, 255:red, 0; green, 0; blue, 0 }  ,fill opacity=1 ] (382.51,189.07) .. controls (383.62,189.49) and (384.17,190.79) .. (383.74,191.98) .. controls (383.31,193.17) and (382.06,193.79) .. (380.95,193.37) .. controls (379.85,192.94) and (379.3,191.64) .. (379.73,190.45) .. controls (380.16,189.26) and (381.4,188.64) .. (382.51,189.07) -- cycle ;
\draw  [fill={rgb, 255:red, 255; green, 255; blue, 255 }  ,fill opacity=1 ] (374.66,210.75) .. controls (375.77,211.17) and (376.32,212.48) .. (375.89,213.66) .. controls (375.46,214.85) and (374.21,215.47) .. (373.11,215.05) .. controls (372,214.62) and (371.45,213.32) .. (371.88,212.13) .. controls (372.31,210.94) and (373.56,210.32) .. (374.66,210.75) -- cycle ;
\draw  [fill={rgb, 255:red, 0; green, 0; blue, 0 }  ,fill opacity=1 ] (366.31,233.83) .. controls (367.42,234.25) and (367.97,235.56) .. (367.54,236.74) .. controls (367.11,237.93) and (365.86,238.55) .. (364.75,238.13) .. controls (363.65,237.7) and (363.1,236.4) .. (363.53,235.21) .. controls (363.96,234.02) and (365.2,233.4) .. (366.31,233.83) -- cycle ;
\draw  [fill={rgb, 255:red, 255; green, 255; blue, 255 }  ,fill opacity=1 ] (358.46,255.51) .. controls (359.57,255.93) and (360.12,257.24) .. (359.69,258.43) .. controls (359.26,259.61) and (358.01,260.23) .. (356.91,259.81) .. controls (355.8,259.39) and (355.25,258.08) .. (355.68,256.89) .. controls (356.11,255.71) and (357.35,255.09) .. (358.46,255.51) -- cycle ;

\draw  [dash pattern={on 0.84pt off 2.51pt}]  (220.9,87.67) -- (238.53,143.18) ;
\draw  [dash pattern={on 0.84pt off 2.51pt}]  (238.53,145.69) -- (222.28,201.19) ;
\draw  [draw opacity=0][dash pattern={on 0.84pt off 2.51pt}] (152.08,128.05) .. controls (155.11,132.22) and (156.86,137.17) .. (156.86,142.47) .. controls (156.86,143.05) and (156.84,143.62) .. (156.8,144.18) -- (125.9,142.47) -- cycle ; \draw  [dash pattern={on 0.84pt off 2.51pt}] (152.08,128.05) .. controls (155.11,132.22) and (156.86,137.17) .. (156.86,142.47) .. controls (156.86,143.05) and (156.84,143.62) .. (156.8,144.18) ;  
\draw  [fill={rgb, 255:red, 0; green, 0; blue, 0 }  ,fill opacity=1 ] (129.2,217.14) .. controls (129.2,219) and (130.66,220.5) .. (132.47,220.5) .. controls (134.28,220.5) and (135.74,219) .. (135.74,217.14) .. controls (135.74,215.28) and (134.28,213.78) .. (132.47,213.78) .. controls (130.66,213.78) and (129.2,215.28) .. (129.2,217.14) -- cycle ;
\draw  [fill={rgb, 255:red, 255; green, 255; blue, 255 }  ,fill opacity=1 ] (129.2,232.84) .. controls (129.2,234.69) and (130.66,236.2) .. (132.47,236.2) .. controls (134.28,236.2) and (135.74,234.69) .. (135.74,232.84) .. controls (135.74,230.98) and (134.28,229.47) .. (132.47,229.47) .. controls (130.66,229.47) and (129.2,230.98) .. (129.2,232.84) -- cycle ;

\draw (155.13,125.66) node [anchor=north west][inner sep=0.75pt]  [font=\scriptsize]  {${\textstyle \frac{2\pi }{k}}$};
\draw (300.45,129) node [anchor=north west][inner sep=0.75pt]  [font=\scriptsize]  {$y^*_{i}$};
\draw (260.5,151.65) node [anchor=north west][inner sep=0.75pt]  [font=\scriptsize] [align=left] {$\displaystyle m$ points at distance $\displaystyle \ell $};
\draw (373.09,54.89) node [anchor=north west][inner sep=0.75pt]  [font=\scriptsize]  {$z^*_{2h}$};
\draw (379.76,74.84) node [anchor=north west][inner sep=0.75pt]  [font=\scriptsize]  {$z^*_{2h-1}$};
\draw (402.25,94.87) node [anchor=north west][inner sep=0.75pt]  [font=\scriptsize] [align=left] {$\displaystyle 2n$ points at distance $\displaystyle \bar \ell $};
\draw (214.62,114.06) node [anchor=north west][inner sep=0.75pt]  [font=\scriptsize]  {$\bar \ell'$};
\draw (114.61,151.16) node [anchor=north west][inner sep=0.75pt]  [font=\scriptsize]  {$0$};
\draw (138.33,212.05) node [anchor=north west][inner sep=0.75pt]  [font=\scriptsize] [align=left] {= peak of $\displaystyle U^*_{1}$};
\draw (138.33,226.8) node [anchor=north west][inner sep=0.75pt]  [font=\scriptsize] [align=left] {= peak of $\displaystyle U^*_{2}$};
\draw (398.76,120.84) node [anchor=north west][inner sep=0.75pt]  [font=\scriptsize]  {$z^*_{1}$};
\draw (402.45,141) node [anchor=north west][inner sep=0.75pt]  [font=\scriptsize]  {$y^*_{m+1}=z^*_0$};
\draw (239.12,129.67) node [anchor=north west][inner sep=0.75pt]  [font=\scriptsize]  {$y^*_{1}$};
\draw (223.56,87.9) node [anchor=north west][inner sep=0.75pt]  [font=\scriptsize]  {$R_{k} y^*_{1}$};

\end{tikzpicture}  \caption{The configuration set of peaks} 
\end{figure}

\medskip \noindent  The configuration is meaningful provided the {\em geometric  condition} $2 m\ell \sin \frac{\pi}{k}+\bar \ell'=2n \bar \ell$ does hold. The parameters $\bar \ell'$and $\bar \ell$ are related to $\ell$ via a balancing condition at $y_1^*$ and $y_{m+1}^*$ (see \eqref{yj*}-\eqref{zh*}), which is the same at both points in \cite{musso-pacard-wei} and allows to determine  $\bar \ell'=\bar \ell$ in terms of $\ell$. In this way, infinitely many solutions $W_j$ of \eqref{lim-nod-2} are found in \cite{musso-pacard-wei} with $\ell_j \to +\infty$ and diverging sequences $m_j,\ n_j \in \mathbb{N}$.\\

In order to state our main result, let us introduce the assumptions required on the various exponents in \eqref{ests}.  First of all,  we  assume 
\begin{equation} \label{po} \hbox{$1<{\mathfrak p}<+\infty$ if  $N=2$\quad and\quad $1< {\mathfrak p}<\frac{N+2}{N-2}$ if $N\geq 3$.}
\end{equation}
Introducing the following relevant quantities
\begin{equation} \label{557}
a:=\min\{a_1+1,a_2\}, \quad b:=\min\{b_2+1,b_1\},  \quad c=\min\{a_1,a_2,b_1,b_2\},
\end{equation}
let us   assume 
\begin{equation} \label{ab}
c>1, \  \max\{a, b\}<c+1.
\end{equation}
Necessary condition to have \eqref{ab} is
\begin{equation}\label{0849}
a_2<a_1+1,\ b_1<b_2+1
\end{equation}
in view of $a<c+1\leq a_1+1$ and $b<c+1\leq b_2+1$.  Since the peaks configuration in Figure \ref{figure} is not symmetric in the two components,  we can use both $(U_1^*,U_2^*)$ and $(U_2^*,U_1^*)$ as an approximate solution to \eqref{ests}. Up to exchange $u_1$ and $u_2$ in \eqref{ests}, hereafter we assume $a\leq b$ and we consider the approximate solution $U^*$ as $U^*=(U_1^*,U_2^*)$. Our main result reads as follows.
\begin{thm}\label{main}
Assume \eqref{po} with ${\mathfrak p}\geq 2$  and  \eqref{ab}. Given $(m,n,k)$ be an admissible triplet (see Definition \ref{admi}), there exists $\Lambda_0>0$ such for that for any $\Lambda>\Lambda_0$ the system \eqref{ests} has a non-radial positive solution $u$ branching off from $U^*$.
\end{thm}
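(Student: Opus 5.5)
The plan is a Lyapunov--Schmidt reduction in the symmetric class. Let $X$ be the space of pairs $(\phi_1,\phi_2)\in H^1(\mathbb R^N)^2$ that are invariant under the rotation $R_k$ of angle $2\pi/k$ in the $(x_1,x_2)$-plane, even in $x_2$ (reflection across the line through $y_1^*$ and $z_0^*$), and even in $x_3,\dots,x_N$. All of these symmetries preserve the configuration in Figure \ref{figure}. They remove the rotational and most translational degeneracies, and they prevent the solution from being radial.

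The free parameters are the spacings. We allow $\ell=\ell_\Lambda$ together with small perturbations of the peak positions: radial displacements along the rays and tangential/normal displacements along the edges, compatible with the symmetry. This gives the ansatz
$$U=(U_1,U_2), \qquad U_i=\sum_{\xi\in\Xi_i} U(\cdot-\xi).$$
The distances are tuned so that the interaction $\Lambda U_1^{a_1}U_2^{a_2}$ is of the same order as the self-interaction $U(\ell)\sim e^{-\ell}\ell^{-(N-1)/2}$. Where a $U_1$-peak sits at distance $\bar\ell$ from a $U_2$-peak, the coupling term near it has size roughly $\Lambda e^{-a\bar\ell}$. This is where $a=\min\{a_1+1,a_2\}$ and $b$ enter: they are the effective decay rates of the cross term in the region between adjacent peaks. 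The balancing at $y_1^*$ and $y_{m+1}^*$ then forces $\Lambda e^{-c\bar\ell}\approx e^{-\ell}$ up to polynomial factors, so $\bar\ell\sim\ell/c$ tends to infinity as $\Lambda\to\infty$. Condition \eqref{ab}, $\max\{a,b\}<c+1$ with $c>1$, is what should make the coupling error (of order $\Lambda e^{-\min(a,b)\cdot\text{dist}}$ away from peaks) smaller in a weighted norm than the main interaction terms. I would verify this with weighted $L^\infty$ norms of the form $\sum_\xi e^{-\tau|x-\xi|}$.

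The steps are as follows.

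\textbf{Step 1: error estimate.} Estimate $E=(E_1,E_2)$ with $E_i=-\Delta U_i+U_i-U_i^{\mathfrak p}+\Lambda(\cdots)$ in the weighted norm. The self-interaction gives $O(e^{-\eta\ell})$. The coupling is controlled via \eqref{ab}.

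\textbf{Step 2: linear theory.} Establish uniform invertibility of the linearized operator $L=(-\Delta+1-\mathfrak pU_i^{\mathfrak p-1})+\Lambda\,(\text{derivative of coupling})$ on $X$, orthogonal to the symmetric combinations of $\partial_j U(\cdot-\xi)$. The standard approach is a contradiction/blow-up argument: near each peak the limit is the nondegenerate operator $-\Delta+1-\mathfrak pU^{\mathfrak p-1}$. The coupling derivative terms such as $\Lambda a_1U_1^{a_1-1}U_2^{a_2}$ have a sign and are small relative to the weights, and $\mathfrak p\ge2$ and $c>1$ ensure that these derivatives are Lipschitz and bounded.

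\textbf{Step 3: nonlinear problem.} Solve for $\phi$ by contraction, using $\mathfrak p\ge2$ for the quadratic estimate. This gives $\|\phi\|\lesssim\|E\|$.

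\textbf{Step 4: reduced problem.} Solve the finite-dimensional system $c_{\xi,j}=0$. Projecting onto the kernels yields, to leading order, the balancing equations: nearest-neighbour attraction $\propto U(d)$ along the rays, and repulsion from the coupling $\propto \Lambda e^{-a d}$ or $\Lambda e^{-bd}$ between alternating peaks on the edges, together with the geometric closing condition. Admissibility of $(m,n,k)$ (Definition \ref{admi}) should be exactly what makes this system solvable, with an invertible Jacobian. Writing the equations in Toda-type variables and applying a degree or fixed-point argument in a small ball then gives a solution.

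I expect Step 4, together with the part of Step 1 that feeds it, to be the main obstacle. The coupling is non-variational and non-symmetric between the components, so the projections of the coupling onto $\partial_jU(\cdot-\xi)$ must be computed asymptotically and shown to dominate the errors. The balancing at $y_1^*$ and $y_{m+1}^*$ also no longer coincides, because $a\le b$ breaks the symmetry, so $\bar\ell'\neq\bar\ell$ must be solved for separately. Positivity of the solution follows from smallness of $\phi$ relative to $U_i$ in the weighted norm near the peaks, combined with the maximum principle away from them.
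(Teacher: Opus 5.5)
Your skeleton (symmetric Lyapunov--Schmidt reduction, weighted error bound, linear theory by blow-up, contraction, reduced problem) is the paper's. However, the scaling it rests on is wrong, and with it the construction does not close.

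\textbf{The balancing conditions and the scales.} The force exerted on the vertex $y^*_{m+1}$ by its two $U_2$-neighbours is $\Lambda\Psi^{(1)}(\bar\ell)\sim\Lambda\,\bar\ell^{-\frac{N-1}{2}a_2}e^{-a_2\bar\ell}$. The exponent is $a_2=a$, not $c$: since $a_2<a_1+1$, the integrand $U^{a_2}(x-p)\,U^{a_1}|\nabla U|$ concentrates at the peak. You even write $\Lambda e^{-ad}$ yourself in Step 4. The two balancing conditions are therefore
\begin{itemize}
\item at $y_1^*$, the purely inner condition $\Psi^{(0)}(\ell)=2\sin\frac{\pi}{k}\,\Psi^{(0)}(\bar\ell')$, with no coupling;
\item at $y^*_{m+1}$, the condition $\Psi^{(0)}(\ell)=\Lambda\Psi^{(1)}(\bar\ell)$.
\end{itemize}
Moreover, $\bar\ell/\ell$ is not yours to choose. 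For a fixed triplet, the first condition gives $\bar\ell'=\ell+\mathcal O(1)$. The closing condition $2m\ell\sin\frac\pi k+\bar\ell'=2n\bar\ell$ then forces $\bar\ell=\mu\ell+\mathcal O(1)$ with $\mu=\mu(m,n,k)$. Only after that does $\Lambda$ fix $\ell$, via $\log\Lambda=(a\mu-1)\ell+\mathcal O(\log\ell)$; this is where $\mu>\frac1a$ comes from.

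Your relation $\Lambda e^{-c\bar\ell}\approx e^{-\ell}$ has the wrong exponent. The conclusion $\bar\ell\sim\ell/c$ does not follow from it either, since it silently drops $\log\Lambda$, which is comparable to $\ell$. Nor is $\bar\ell\sim\ell/c$ compatible with the closing condition for a given triplet. When $c<a_2$, your tuning makes $\Lambda\Psi^{(1)}(\bar\ell)\ll\Psi^{(0)}(\ell)$. The reduced equation at $y_{m+1}$ then keeps a nonzero leading term of size $\Psi^{(0)}(\ell)$, which the small displacements $\alpha_j,\beta_h,\gamma_h$ cannot cancel. Likewise, $\bar\ell'\neq\bar\ell$ has nothing to do with $a\le b$. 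It reflects that the balance at $y^*_{m+1}$ involves the coupling while the one at $y_1^*$ does not.

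\textbf{The role of \eqref{ab}.} The coupling error is \emph{not} small compared with the interaction forces. In the weighted norm, $\|\mathscr E\|_*$ is of order $\Lambda e^{-(c-\eta)\mu\ell}+e^{-(1-\eta)\ell}$. This exceeds the forces $\Psi^{(0)}(\ell)\approx\Lambda e^{-a\bar\ell}$ by roughly $e^{(a-c)\bar\ell}$. What must be checked is that the feedback of $\phi$ into the reduced equations is $o(\Lambda e^{-a\bar\ell})$. For the linear part this feedback is $\int\mathscr L_1(\phi)\,\partial_jU(x-p)\,dx=\mathcal O(\Lambda e^{-(a-1)\bar\ell}\|\phi\|_*)\approx\Lambda^2e^{-(a-1+c)\bar\ell}$, and the $\mathscr Q$ terms behave similarly. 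The requirement amounts to $\Lambda e^{-(c-1)\bar\ell}\to0$, which with the correct scaling is exactly $(a+1-c)\mu<1$. The hypotheses $c>1$, $\max\{a,b\}<c+1$ and \eqref{fund} serve to make the window \eqref{d5} nonempty.

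\textbf{Nondegeneracy in Step 4.} Step 4 asserts the nondegeneracy that is the actual content of the last step. Because $a_2,b_1>1$, the cross term is not a sum of pairwise forces. One must expand $\int[U(x-p_1)+U(x-p_2)]^{a_2}U^{a_1}\nabla U$ jointly in the displacements. The leading forces at the edge peaks other than the vertex vanish by symmetry, so only their linearizations (constants $\mathfrak c_3,\dots,\mathfrak c_6$) enter. At the vertex, differentiation produces the coefficient $a_2$ and the constant $\mathfrak c_2$. After eliminating $\boldsymbol\gamma$ and $\boldsymbol\beta$ (for instance $\beta_1=-\frac{n-1}{n}\sin\frac\pi k\,\alpha_{m+1}$), the system reduces to a $2\times 2$ system in $(\alpha_1,\alpha_{m+1})$. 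Its determinant is $2(m+2)\sin\frac\pi k\,(1-a\mu)$, nonzero because $\mu>\frac1a$.

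\textbf{Two smaller points.} First, the symmetry class contains radial functions, so non-radiality comes from closeness to $U^*$, not from the symmetries. Second, smallness of $\phi$ in a norm weighted by $e^{-\eta|x-p|}$ with $\eta<1$ does not dominate $U_i\sim e^{-|x-p|}$ away from the peaks. Positivity is best obtained as in the paper: solve the extended system with $u_+^{\mathfrak p}$, $|s|^{a_1-1}s|t|^{a_2}$, $|s|^{b_1}|t|^{b_2-1}t$, and test each equation with $(u_i)_-$.
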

Since there exist infinitely many admissible triplets $(m,n,k)$ (as stated in Lemma \ref{admi2}), Theorem~\ref{main} yields infinitely many non-radial positive solutions of \eqref{ests}. To the best of our knowledge, this provides the first example of non-synchronized solutions for strongly competitive Schr\"odinger systems in the whole space. This result should be contrasted with the cooperative case $\Lambda<0$, where Busca and Sirakov \cite{bus-sir} proved that all positive solutions are radially symmetric.\\

Compared with the construction in  \cite{musso-pacard-wei} , where the number of peaks diverges and plays the role of a parameter in the Lyapunov--Schmidt procedure, the presence of the coupling parameter $\Lambda$ allows us to fix any admissible triplet $(m,n,k)$. On the other hand, the strong competition regime introduces significant technical difficulties. In particular, while the interaction among peaks of the same component is governed by a superposition principle, this property fails for the interaction between different components unless $a_2=b_1=1$. As a consequence, delicate expansions are required to control the corresponding interaction forces. Remarkably, despite this lack of linear superposition, both the size and the direction of the leading-order interaction terms turn out to be consistent with those obtained by formal superposition.


Several remarks concerning the assumptions of Theorem~\ref{main} are in order.\\

\begin{rem}  
\rm 
Our result does not apply   to the the Bose-Einstein system \eqref{bose}.
Indeed, the choice $b_1=a_1+1$ and $b_2=a_2-1$, which corresponds to a variational structure for system \eqref{ests}, does not satisfy the assumption \eqref{ab}.  In particular, we could say that such a choice is {\em critical} for the inequality \eqref{ab} since $a=b=c+1$. This means that the approximate solution $U^*$ is not good enough to perform a perturbation argument and one should identify the small correction term to add into $U^*$ in order to improve the smallness of the corresponding error. 
  \\
In this regard, it is useful to note that the {\em sub-critical} choice 
$b_1=a_1+1-\epsilon$ and $b_2=a_2-1+\epsilon$ with $a_1>1$ meets the requirement \eqref{ab} provided 
$a_2 < a_1+1<a_2+2\epsilon$ and $\epsilon>0$ sufficiently small (in the form $0<\epsilon<\min\{1,\frac{a_1}{2},a_1-1 \}$).\\
We wish   to point out that the existence of non-radial solutions for the Bose-Einstein system \eqref{bose} is still open in a strongly competitive regime. At this aim it is worth to point out that the existence of non-radial solutions to \eqref{bose} (when the linear terms in   both equations are replaced by $\lambda_i u_i$ and $\lambda_1\not=\lambda_2$) has been obtained by Chen, Lin and Wei in \cite{chen-lin-wei}  in a weakly competitive regime (i.e. $\Lambda>0$ and small). \end{rem}
\begin{rem} \rm  
   We also observe that our result does not apply to systems with Lotka--Volterra type interactions, namely when $a_1 = b_1 = q = 1$ and $a_2 = b_2 = r = 1$. Indeed, the parameters $q$ and $r$ satisfy~\eqref{ab} only if $r - 1 < q < r + 1$, and therefore the case $q = r = 1$ is not covered. 
It would be interesting to investigate whether the more general configurations introduced by Ao, Musso, Pacard, and Wei in~\cite{ao-mu-pa-we} could be used to address this case and, more generally, systems that do not satisfy~\eqref{ab}.\\
In this regard, we would like to mention that very recently Clapp and Szulkin~\cite{clapp-szulkin} found solutions to a class of non-variational systems—including the Lotka--Volterra and Bose--Einstein interaction types—in the weakly competitive regime with $\Lambda > 0$ fixed.

\end{rem}
\begin{rem} 
\rm  
Assumption $\mathfrak p \geq 2$ is not really needed, but it simplifies the statement of Theorem~\ref{main}. If we allow $\frac{3}{2} < \mathfrak p < 2$, then in~\eqref{d5} the additional conditions~\eqref{0945} and~\eqref{1642}--\eqref{1653} have to be imposed, where $\mu$ is defined in~\eqref{drate}. Accordingly, assumption~\eqref{ab} has to be modified in order to ensure that the range in~\eqref{d5} is non-empty. The case $1 < \mathfrak p \leq \frac{3}{2}$ cannot be treated.\\
We also point out that the lower bound $\mathfrak p \geq 2$, together with the sub-critical decay, holds only in low dimensions, i.e.\ for $2\le N \le 5$.

\end{rem}

\begin{rem} \rm
Finally, it would be interesting to exploit the possibility to build this kind of solutions for system with $d$ components
as
$$-\Delta u_i+u_i=u_i^\mathfrak p-\sum\limits_{j=1\atop i\not=j}^d\Lambda_{ij}u_i^{a_{ij}}u_j^{b_{ij}}\ \hbox{in}\ \mathbb R^N,\ i=1,\dots,d,$$
in a mixed cooperative and competitive regime (i.e.  some $\Lambda_{ij}$'s are positive and  some are negative).
\end{rem}

The paper is organized as follows. In Section \ref{theansatz} we introduce the ansatz and the balancing conditions to explain the role of \eqref{ab} in the correct choice of $\bar \lambda'$, $\lambda$ and $\bar \lambda$. In Section \ref{liapu-pro} we perform the Ljapunov-Schmidt reduction and the expansions for the reduced problem are carried out in Section \ref{sec1122}.  Finally,  the proof of Theorem \ref{main} is given in Section \ref{secmain}.\\


 \section{The ansatz} \label{theansatz}
Denote by  $\mathtt e_i$, $i=1,\dots,N$, the canonical basis of $\mathbb{R}^N$. Let $R_k$ be the rotation of angle $\frac{2\pi}k$ in the $(x_1,x_2)-$plane and $R_{-k}:=R_k^{-1}$.  In Figure \ref{figure} the {\em inner} regular polygon in $\mathbb R^2\times\{0\} \subset \mathbb{R}^N$ has vertices given by the orbit of 
\begin{equation}\label{y1star}
y^*_1:=\frac{\bar\ell'}{2\sin{\pi\over k} }\mathtt{e}_1 
\end{equation}
under the group action generated by $R_k$.  By construction, the edges of this polygon have length $\bar\ell'$. Next, the {\em outer} regular polygon in Figure \ref{figure} has vertices generated by the orbit of 
$$z^*_0=y^*_{m+1} := y^*_1 + m \ell \mathtt e_1$$
under the group action generated by $R_k$.  Since the distance of $y^*_{m+1} $ from the origin is $m\ell+ \frac{\bar\ell'}{2\sin(\pi/k)}$,  to have the edges of the outer polygon of length $2n\bar\ell$ the involved parameters must satisfy,  as already said, the following {\em geometric  condition}
\begin{equation} \label{0946}
\fbox{$2 m\ell \sin \frac{\pi}{k}+\bar\ell'=2n\bar\ell$}
\end{equation} 
Let us consider the evenly distributed points
\begin{equation}\label{yj*}
y^*_j :=y^*_1+(j-1) \ell\mathtt e_1, \:  j=1,\dots, m+1,
\end{equation}
along the ray joining $y^*_1$ and $y^*_{m+1}$ and
\begin{equation}\label{zh*}
z^*_h :=y^*_{m+1} +h\bar\ell \mathtt t, \: h=0,\dots ,2n,
\end{equation}
along the edge of the outer polygon of length $2n\bar\ell$,  where
\begin{equation}\label{vect}
\mathtt t := - \sin \frac \pi k \mathtt e_1 + \cos \frac\pi k \mathtt e_2 ,  \quad \mathtt n :=  \cos \frac \pi k \mathtt e_1 + \sin \frac\pi k \mathtt e_2 .
\end{equation}
By construction notice that $z_{2n}^*=R_k y^*_{ m + 1}.$ Finally, we define $(U_1^*,U_2^*)$  as
\begin{equation}\label{uistar}
 \begin{aligned}U^*_1(x):=&\sum_{i=0}^{k-1} \left[ \sum_{j=1}^m U(x - R_k^{i}y_j^*) + \sum_{h=0}^{n-1}U(x - R_k^{i} z_{2h}^*)\right]\\
U^*_2(x):=&\sum_{i=0}^{k-1}   \sum_{h=1}^{n}U(x - R_k^{i} z_{2h-1}^* ),
 \end{aligned} 
\end{equation}
where $R_k^i$ denotes the $i$th power of $R_k$.  The function $(U_1^*,U_2^*)$ is an approximate solution of \eqref{ests} since the distance among all peaks will tend to infinity as  $\Lambda \to +\infty$.\\

Due to the symmetries of $(U_1^*,U_2^*)$ it is natural to look for solutions $(u_1,u_2)$ of \eqref{ests} which are invariant 
as follows:
\begin{equation}\label{roto}
u(x)=u(R_kx),\qquad u(\dots,x_i,\dots)=u(\dots,-x_i,\dots), \: i=2,\dots,N.
\end{equation}
Since the linearized operator of \eqref{ests} at $(U_1^*,U_2^*)$ has a kernel due to translation almost-invariances,  we need to compensate it by introducing some translation in the peaks.  The approximate solution $U=(U_1,U_2)$
we are interested in is defined as
\begin{equation} 
\label{u1}
U_1(x):=\sum_{i=0}^{k-1} \left[ \sum_{j=1}^m U(x- R_k^{i}y_j) + \sum_{h=0}^{n-1}U(x - R_k^{i} z_{2h})\right]=\sum\limits_{p\in\Pi_1}
U(x-p)
\end{equation}
and
 \begin{equation} 
\label{u2}U_2(x):=  \sum_{i=0}^{k-1} \sum_{h=1}^{n}U(x - R_k^{i} z_{2h-1}) = \sum\limits_{p\in\Pi_2}
U(x-p),
 \end{equation}
where
\begin{equation}\label{yj}
y_j=y_j^*+\al_j \mathtt e_1, \:  j=1,\dots,m ,
\end{equation}
and
\begin{equation}\label{zh}
z_0=z_0^*+\alpha_{m+1} \mathtt e_1,\quad z_h=z_h^*+\be_h \mathtt t + \bar\ell \ga_h \mathtt n,\:  h=1,\dots,2n-1.
\end{equation}
The notation $\Pi=\Pi_1\cup \Pi_2$ denotes the set of all  peaks which collects those of the first component $U_1$  
$$\Pi_1:=\bigcup_{i=0}^{k-1}\Big(\{R_k^{i} y_j : j=1,\dots,m\}\cup \{R_k^{i}z_{2h} : h=0,\dots,n-1\}\Big)$$
and the ones of the second component $U_2$  
$$\Pi_2:=\bigcup_{i=0}^{k-1} \{R_k^{i}z_{2h-1} : h=1,\dots,n\}.$$
When $\alpha_j=\beta_h=\gamma_h=0$ for all $j=1,\dots,m+1$ and $h=1,\dots,2n-1$, the sets $\Pi,\Pi_1,\Pi_2$ will be simply denoted by $\Pi^*,\Pi_1^*,\Pi_2^*$.  On the translation parameters let us assume 
 \begin{equation}\label{cpabg}
|\al_j|\le  1 \quad \text{ for } j=1,\dots,m+1, \quad |\be_h|+\bar\ell |\ga_h| \le   1 \quad \text{ for } h=1,\dots,2n-1,
\end{equation}
and
\begin{equation} \label{symme}
 \beta_h=-\beta_{2n-h}, \quad \gamma_h=\gamma_{2n-h} \qquad \hbox{for} \ h=1,\dots,n,
\end{equation}
where \eqref{symme} simply guarantee the invariance \eqref{roto} for $U$.  The free parameters $\alpha_j$'s,  $\beta_h$'s and $\gamma_h$'s form a $(2n+m)-$dimensional family in view of \eqref{symme} and a-posteriori will be very small, yielding that $U$ is a small perturbation of the initial ansatz $(U_1^*,U_2^*)$.

\subsection{The  balancing conditions}
The function $(U^*_1,U^*_2)$ depends on three integer parameters $n,$ $m$ and $ k$ and three continuous ones $\ell,$ $\bar \ell$ and $\bar\ell'$.  We will fix a triple $(n,m,k)$,  admissible in a suitable sense, and will take the continuous  parameters large as $\Lambda\to+\infty$ to have $(U^*_1,U^*_2)$ as an approximate solution of \eqref{ests}.  In addition to the {\em geometric condition} \eqref{0946}
they must also satisfy two {\em  balancing conditions} which can be deduced as follows.\\

Let us introduce the interaction functions which measure how each peak  interacts with the ones in the same or different component.  Given $p \in \Pi_i$,  denote by $\Pi_j^p$ the set of the closest peaks in $\Pi_j$ to $p$. 
The  {\em inner} interaction is concerned with peaks of the same component:
\begin{equation}\label{in-1}
{\boldsymbol\Psi}^{(0)}(p):=\frac1{\mathfrak p}\int_{\mathbb{R}^N} \left(\sum\limits_{q\in \Pi_i^p}U(x+p-q)\right)\nabla U^{{\mathfrak p}}(x)dx,\: p\in \Pi_i.
\end{equation}
According to \cite{musso-pacard-wei} the contribution coming from each $q \in \Pi_i$ has the form 
\begin{equation} \label{1531}
\frac1{\mathfrak p} \int_{\mathbb{R}^N} U(x+p-q) \nabla U^{{\mathfrak p}}(x)dx=\Psi^{(0)} (|p-q|) \frac{p-q}{|p-q|},
\end{equation}
where
\begin{equation}\label{psizero}
\Psi^{(0)} (t)=t^{-\frac{N-1}{2} } e^{-  t }({\mathfrak c}_0+o(1))\ \hbox{as}\ t\to\infty
\end{equation}
for some ${\mathfrak c}_0>0$, see \eqref{1328}.  Expansion \eqref{psizero} follows by the asymptotics of the radially symmetric solution $U$ of \eqref{bubble}:
\begin{equation} \label{1819}
\lim_{r \to +\infty} e^r r^{\frac{N-1}{2}}  U(r)>0, \quad \lim_{r \to +\infty} \frac{U'(r)}{U(r)}=-1.
\end{equation}
The asymptotic size $e^{-|p-q|}$ of the contribution coming from $q \in \Pi_i$ explains why at main order the sum
in \eqref{in-1} is just over $q \in \Pi_i^p$. \\

Up to $\Lambda$, the {\em outer} interactions,  concerning peaks of different components, take the form:
\begin{eqnarray}\label{in-2_1}
&&{\boldsymbol\Psi}^{(1)}(p):=\frac1{a_1+1}\int_{\mathbb{R}^N} \Big[\sum_{q \in \Pi_2^p} U(x+p-q)\Big]^{a_2} \nabla U^{a_1+1}(x)dx,\:
p\in \Pi_1, \\
\label{in-2_2}
&& {\boldsymbol\Psi}^{(2)}(p):=\frac1{b_2+1}\int_{\mathbb{R}^N} \Big[\sum_{q \in \Pi_1^p} U(x+p-q) \Big]^{b_1} \nabla U^{b_2+1}(x)dx,\: p\in \Pi_2.
\end{eqnarray}
As already noticed in the Introduction,  the {\em outer} interactions don't decompose as superposition of the contributions generated by each other peak since $a_2,b_1>1$ and delicate expansions will be carried out in Section \ref{new-1}.  For the discussion here,  let us just recall \eqref{1604}-\eqref{psi1barell-2}: ${\boldsymbol\Psi}^{(1)}(y_{m+1}^*)=-\Psi^{(1)}(\bar\ell) \mathtt e_1$ with
\begin{equation}\label{psiuno}\Psi^{(1)}(\bar\ell)=\bar \ell^{-\frac{N-1}{2}a_2} e^{-a_2 \bar \ell }(\mathfrak c_1+o(1))\ \hbox{as}\ \bar \ell \to\infty \end{equation}
for some $\mathfrak c_1>0$.  The presence of $\Pi_j^p$ in \eqref{in-2_1}-\eqref{in-2_2} can be justified by the estimates
\begin{eqnarray}
\int_{\mathbb{R}^N}U^{a_2}(x+p-q) \nabla U^{a_1+1}(x)dx=\mathcal O\Big(|p-q|^{-a_2 \frac{N-1}{2}} e^{-a_2 |p-q|}\Big)\label{1532} 
\end{eqnarray}
for $p \in \Pi_1$,  $q \in \Pi_2$,  and
\begin{eqnarray}
\int_{\mathbb{R}^N} U^{b_1}(x+p-q) \nabla U^{b_2+1}(x)dx=\mathcal O\Big(|p-q|^{-b_1 \frac{N-1}{2}} e^{-b_1 |p-q|}\Big),\label{1533}
\end{eqnarray}
for $p \in \Pi_2$,  $q \in \Pi_1$, which are valid in view of \eqref{ab}-\eqref{0849}.\\

The    {\em inner balancing condition} coincides with the one found in \cite{mu-pa-wei-2012} and states the equilibrium among the forces acting on $y_1^*$ (see Lemma \ref{new-2}):
\begin{equation}\label{ba1}
\fbox{$\Psi^{(0)}(\ell)-2\Psi^{(0)}(\bar\ell')\sin\frac\pi k=0$}
\end{equation}
in view of $\Pi_1^{y_1^*}=\{y_2^*,R_k y_1^*,R_{-k} y_1^*\}$ and 
$${\boldsymbol\Psi}^{(0)}(y_1^*)=\Big(\Psi^{(0)}(\ell)-2\Psi^{(0)}(\bar\ell')\sin\frac\pi k\Big)\mathtt e_1.$$
The {\em outer balancing condition} is new and arises from the equilibrium among the forces acting on $y_{m+1}^*$ (see Lemmas \ref{new-1}-\ref{new-2}):
\begin{equation}\label{ba2}
\fbox{$\Psi^{(0)}(\ell)-\Lambda  \Psi^{(1)}(\bar\ell)=0$}
\end{equation}
in view of $\Pi_1^{y_{m+1}^*}=\{y_m^*\}$, $\Pi_2^{y_{m+1}^*}=\{z_1^*,R_{-k} z_{2n-1}^*\}$ and
$${\boldsymbol\Psi}^{(0)}(y_{m+1}^*)-\Lambda {\boldsymbol\Psi}^{(1)}(y_{m+1}^*)=\Big( \Psi^{(0)}(\ell)-   \Lambda \Psi^{(1)}(\bar\ell)\Big)\mathtt e_1.$$
Notice that the forces acting on all the other peaks are automatically in equilibrium. In particular, since ${\boldsymbol\Psi}^{(2)}(z_{2h-1}^*)=0$ for $h=1,\dots, n$, an expression for the modulus of ${\boldsymbol\Psi}^{(2)}$ as in \eqref{psiuno} is not available and we define $\Psi^{(2)}(\bar\ell)$ in a artificial way as follows
\begin{equation}\label{psidue}\Psi^{(2)}(\bar\ell)=\bar \ell^{-\frac{N-1}{2}b_1} e^{-b_1 \bar \ell }.
\end{equation}
Such a definition is still meaningful since, as we will see in Lemma \ref{new-1}, when we will perturb the point $z_{2h-1}^*$ into $z_{2h-1}$, the size of ${\boldsymbol\Psi}^{(2)}(z_{2h-1})$ will be precisely given by \eqref{psidue}.\\

The balancing conditions \eqref{ba1}-\eqref{ba2}, together with the geometric condition \eqref{0946}, allow to choose the parameters $\ell,\bar\ell$ and $\bar\ell'$ in terms of the free parameter $\Lambda$ as follows
\begin{equation}\label{expansion}
\log \Lambda=[a\mu-1] \ell +\mathcal  O(\log \ell), \quad \bar\ell =\mu \ell +\mathcal  O(1) ,\quad \bar\ell' =\ell +\mathcal O(1) ,
\end{equation}
where  
\begin{equation}\label{drate} \mu=\mu(m,n,k):= \frac{2m\sin \frac{\pi}{k}+1}{2n}\end{equation} 
has to satisfy $\mu >\frac{1}{a}$ to have \eqref{expansion} meaningful. Indeed, by \eqref{psizero} and \eqref{ba1} we deduce that  $ \bar\ell'=\ell+\mathcal O(1)$, which inserted in \eqref{0946}, gives that
\begin{equation}0=2 m\ell \sin \frac{\pi}{k}-2n\bar\ell +\bar\ell'=(2 m\sin \frac{\pi}{k}+1)\ell -2n\bar\ell+\mathcal O(1).
\label{1229}\end{equation}
Moreover, by  \eqref{psizero}, \eqref{psiuno} and \eqref{ba2}  
\begin{equation}-\ell =\log \Lambda-a\bar\ell-\frac{N-1}{2}(a-1) \log \bar \ell +\mathcal  O(1),\label{1230}\end{equation}
and then \eqref{expansion} follows by \eqref{1229}-\eqref{1230}. \\

To identify the strongest interaction forces acting on $p$,  we need to compare the size of ${\boldsymbol\Psi}^{(0)}(p)$ with $\Lambda {\boldsymbol\Psi}^{(1)}(p)$,  when $p \in \Pi_1$,  or $\Lambda {\boldsymbol\Psi}^{(2)}(p)$, when $p \in \Pi_2$, and we will denote $\Pi^p \subset \Pi_1^p \cup \Pi_2^p$ as the set of peaks whose interaction force on $p$ is predominant. Thanks to  \eqref{1531}-\eqref{psizero} and \eqref{1532}-\eqref{1533} we can characterize $\Pi^p$ as the set of ``closest" peaks with respect to $d(p,\cdot)$, where
$$
d(p,q)=\left\{ \begin{array}{ll}
|p-q|  &\hbox{if }p,q \in \Pi_1 \hbox{ or }p,q \in \Pi_2 \\
a|p-q|-\log \Lambda & \hbox{if }p\in \Pi_1, \ q \in \Pi_2\\
b|p-q|-\log \Lambda  & \hbox{if }p\in \Pi_2,\ q \in \Pi_1.
\end{array} \right.
$$
For example, the two balancing condition \eqref{ba1}-\eqref{ba2} hold in view of 
\begin{equation} \label{1623}
\Pi^{y_1^*}=\{y_2^*,R_k y_1^*,R_{-k} y_1^*\}, \quad \Pi^{y_{m+1}^*}=\{y_m^*,z_1^*,R_{-k} z_{2n-1}^*\}.
\end{equation} 
 Besides them,  in the discussion below we also establish that 
\begin{equation} \label{1624}
\Pi^{y_j^*}=\{y_{j-1}^*,y_{j+1}^*\}, \ j=2,\dots,m, \quad \Pi^{z_h^*}=\{z_{h-1}^*,z_{h+1}^*\}, \ h=1,\dots,2n-1. 
\end{equation} 
Since
\begin{equation} \label{1838}
\frac{ |z_2^* -y_m^*|}{\ell} =\frac{|   \ell \mathtt e_1+ 2 \bar \ell \mathtt t|}{\ell} \to \sqrt{1+4\mu^2-4\mu \sin\frac\pi k}>1
\end{equation}
when $\mu >\sin\frac\pi k$, observe that
\begin{eqnarray}
\rho_1^*:&=&\min\limits_{p,q\in \Pi_1^* \atop p\ne q}|p-q| =\min \{ |R_k y_1^*-y_1^*|, |y^*_{i+1}-y_i^*|,|z^*_{2h+2}-z^*_{2h}|, |z^*_2-y^*_m|\} \nonumber \\
&&=\min \{ \bar \ell', \ell,2\bar \ell, |z_2^*-y_m^*|\}= \ell \label{1117}
\end{eqnarray}
in view of \eqref{expansion} if $\mu>\frac{1}{2}$ and $k\geq 6$.  Since $|y^*_m-y^*_{m+1}| = \ell$, in particular $y^*_m$ is the closest point of $\Pi_1$ to $y^*_{m+1}$ with an interaction of size $\Psi^{(0)}(\ell)$.  On the other side, we have that
\begin{eqnarray} 
\rho^*: = \min\limits_{p\in \Pi_1^*, q\in \Pi_2^*}|p-q|
=\min\{|z_1^*-y^*_{m+1}| , |z^*_1-y^*_m| \} = \min\{\bar \ell , |z_1^*-y_m^*| \}=\bar \ell  \label{1118}
\end{eqnarray}
if $\mu < 1$ and $k \geq 6$ in view of  \eqref{expansion} and 
\begin{equation}
\frac{ |z_1^*-y_m^*|}{\bar \ell}=\frac{|   \ell \mathtt e_1+  \bar \ell \mathtt t|}{ \mu \ell+\mathcal  O(1)} \to \sqrt{1+\mu^{-2}-2 \mu^{-1} \sin\frac\pi k} >1 \label{2001}
\end{equation}
when $\mu^{-1} >2 \sin\frac\pi k$. Since $|z^*_1-y^*_{m+1}|, |R_{-k}z^*_{2n-1}-y^*_{m+1}|=\bar \ell$,  in particular $z^*_1,R_{-k}z^*_{2n-1}$  are the closest points of $\Pi_2$ to $y^*_{m+1}$ with an interaction of size $\Lambda \Psi^{(1)}(\bar\ell)$ and then $\Pi^{y_{m+1}^*}=\{y_m^*,z_1^*,R_{-k} z_{2n-1}^*\}$ in view of \eqref{ba2}.  
It is easily seen that $\Pi^{y_1^*}=\{y_2^*,R_k y_1^*,R_{-k} y_1^*\}$ in view of \eqref{expansion}. Similarly, 
$\Pi^{y_j^*}=\{y_{j-1}^*,y_{j+1}^*\}$ for $j=2,\dots,m$ and the predominant forces are automatically in equilibrium for symmetry reasons, as we will see. For $y^*_m$ this follows by
$$\frac{a |z^*_1-y^*_m| -\log \Lambda}{|y^*_{m+1}-y^*_m|} \to a \sqrt{1+\mu^2-2\mu \sin\frac\pi k} - (a \mu-1)>1$$
in view of \eqref{expansion}  and \eqref{2001}. Finally,  since
\begin{eqnarray}\label{1119}
\rho^*_2:&&=\min\limits_{p,q\in \Pi_2^* \atop p\ne q}|p-q|=\min\{|z^*_{2h+1}-z^*_{2h-1}|,  |z^*_1-R_{-k}z^*_{2n-1}|\} \nonumber \\ 
&& =\min\{2\bar \ell,  |z_1^*-R_{-k}z_{2n-1}^*|\}
=2\cos  \frac{\pi}{k}\bar \ell,
\end{eqnarray}
we have that $\Pi^{z_h^*}=\{z_{h-1}^*,z_{h+1}^*\}$ for $h=1,\dots,2n-1$ with prevalent forces still in equilibrium for symmetry reasons, as it follows by
$$\frac{|y^*_m-z^*_2|}{a|z^*_1-z^*_2|-\log \Lambda}=\frac{|y_m^*-z_2^*|}{\ell+\mathcal  O(1)}\to \sqrt{1+4\mu^2-4\mu \sin\frac\pi k}>1$$
and
$$\frac{|z^*_1-R_{-k} z^*_{2n-1}|}{b |y^*_{m+1}-z^*_1|-\log \Lambda} \to  \frac{2 \mu \cos  \frac{\pi}{k}}{(b-a)\mu+1} >1$$
in view of  \eqref{expansion},  \eqref{1838} and \eqref{fund} provided
$$\mu >\frac 1{a-b+2\cos\frac\pi k}.$$
Notice that $a-b+2\cos\frac\pi k>a-c+2\cos\frac\pi k-1\geq 2\cos\frac\pi k-1\geq 0$ if $k\geq 6$ in view of  \eqref{ab}.  Moreover,  thanks to \eqref{cpabg} estimates \eqref{1117}-\eqref{1118} and \eqref{1119} remain valid in the form
\begin{equation} \label{1217}
\begin{aligned}
\rho_1:=& \min\limits_{p,q\in \Pi_1 \atop p\ne q}|p-q| = \ell+\mathcal O(1), \quad \rho: = \min\limits_{p\in \Pi_1, q\in \Pi_2}|p-q|=\mu \ell+\mathcal O(1)\\ 
\rho_2:=& \min\limits_{p,q\in \Pi_2 \atop p\ne q}|p-q|=2 \mu \cos  \frac{\pi}{k} \ell+\mathcal O(1)
\end{aligned}
\end{equation}
in view of \eqref{expansion}.\\

As a result of such a discussion, hereafter assume $k\geq k_0$, where $k_0\geq 6$ is sufficiently large to have
\begin{equation} \label{fund}
b<c-1+2\cos \frac{\pi}{k_0}
\end{equation}
in view of \eqref{ab},  and $\mu$ to satisfy
\begin{equation}\label{d5}
\max\left\{\frac1a,\frac{1}{2},\frac 1{a-b+2\cos\frac\pi k}\right\}<\mu<\frac{1}{a+1-c}.
\end{equation}
The role of $a+1-c$ will become clear in the sequel and condition $(a+1-c)\mu<1$ is essential to ensure that the projection of \eqref{pro-pro} onto the linearized kernel is at main order due to the error term $\mathscr E$ (see  \eqref{0845} and Lemma \ref{linear-rate}).  Condition $\mu>\frac{1}{a}$ is necessary in \eqref{expansion} and is consistent with $\mu<\frac1{a+1-c}$ since $1<c \leq a$ in view of \eqref{ab}. Moreover, observe that $a+1-c<2$ and $a+1-c<a-b+2\cos \frac{\pi}{k}$ follow by \eqref{ab} and \eqref{fund},  to guarantee that assumption \eqref{d5} makes perfectly sense.\\

Finally,  we are ready to introduce the following definition.
\begin{defi}\label{admi}We say that $(m,n,k)$ is admissible if
\eqref{d5} holds true.
\end{defi}
In order to explore the range of applicability of Theorem \ref{main}, we prove the existence of  infinitely many   admissible triplets.
\begin{lm}\label{admi2}
For any $k\ge k_0$ there exists a sequence $(m_i,n_i)_{i\in\mathbb N}$ sucht that $(m_i,n_i,k)$ is admissible.
\end{lm}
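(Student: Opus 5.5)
Fix $k\ge k_0$ and set
\[
A:=\max\left\{\frac1a,\frac12,\frac1{a-b+2\cos\frac\pi k}\right\},\qquad B:=\frac1{a+1-c}.
\]
The plan is to show that $A<B$ and then produce infinitely many pairs $(m,n)\in\mathbb N^2$ with $\mu(m,n,k)=\frac{2m\sin\frac\pi k+1}{2n}\in(A,B)$. Admissibility in Definition \ref{admi} only means \eqref{d5}, i.e.\ $\mu\in(A,B)$.

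\textbf{Step 1: the interval $(A,B)$ is nonempty.} We check each of the three comparisons, as already indicated after \eqref{d5}.
\begin{itemize}
\item $\frac1a<B$: this is $a+1-c<a$, i.e.\ $c>1$, which holds by \eqref{ab}.
\item $\frac12<B$: this is $a+1-c<2$, i.e.\ $a<c+1$, which holds by \eqref{ab}.
\item $\frac1{a-b+2\cos\frac\pi k}<B$: this is $a+1-c<a-b+2\cos\frac\pi k$, i.e.\ $b<c-1+2\cos\frac\pi k$. This follows from \eqref{fund}, since $\cos\frac\pi k\ge\cos\frac\pi{k_0}$ for $k\ge k_0$.
\end{itemize}
We also note $a-b+2\cos\frac\pi k>0$, as observed before Definition \ref{admi}, so $A$ is a well-defined positive number.

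\textbf{Step 2: density of the values of $\mu$.} Put $s:=2\sin\frac\pi k>0$ and choose any $\mu_0\in(A,B)$. For each $n\in\mathbb N$, let $m_n$ be the nonnegative integer minimizing $|ms+1-2n\mu_0|$. Such an integer exists with error at most $s/2$ once $2n\mu_0\ge1$. Then
\[
|\mu(m_n,n,k)-\mu_0|\le \frac{s}{4n}\longrightarrow 0 .
\]
So for all $n$ large, $\mu(m_n,n,k)\in(A,B)$. Since $\mu_0>0$, we also get $m_n\sim 2n\mu_0/s\to+\infty$, which gives a genuine sequence of distinct pairs $(m_i,n_i)$ with $m_i,n_i\to\infty$. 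If one wants $m_i\ge1$ (or larger) for the configuration to be meaningful, this is automatic for $n$ large.

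There is essentially no obstacle. The only point needing care is the third comparison in Step 1, which is exactly where the choice of $k_0$ through \eqref{fund} enters. If the geometric facts used earlier also require conditions such as $\mu>\sin\frac\pi k$ or $\mu^{-1}>2\sin\frac\pi k$, these are implied by \eqref{d5} for $k$ large ($\mu>\frac12$ and $\mu<1$). They would in any case only shrink $(A,B)$ to a smaller nonempty open interval, to which the same density argument applies.
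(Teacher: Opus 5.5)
Your proof is correct and follows essentially the paper's argument: both first use that the interval in \eqref{d5} is nonempty (you verify the three comparisons explicitly, while the paper relies on the remarks made after \eqref{d5}), and then use that the values $\mu(m,n,k)=\frac mn\sin\frac\pi k+\frac1{2n}$ become dense as $n\to\infty$. The only cosmetic difference is in the second step. The paper fixes a rational ratio $m_i/n_i=q$ with $q\sin\frac\pi k\in[\mathtt x,\mathtt y)$, so that $\mu=q\sin\frac\pi k+\frac1{2n_i}$ lies in the interval for large $i$. You instead round $m$ to the nearest integer for each $n$, which approximates a chosen target $\mu_0$ to within $\frac{\sin(\pi/k)}{2n}$.
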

\begin{proof}
Fix $k\ge k_0$ so that
$$\mathtt x:=\max\left\{\frac1a,\frac{1}{2},\frac 1{a-b+2\cos\frac\pi k}\right\}<\frac{1}{a+1-c}=:\mathtt y.$$
Choose  $q\in\mathbb Q$ such that $\mathtt x\le q\sin\frac\pi k<\mathtt y$ and let $m_i,n_i\in \mathbb N$ with $n_i\to\infty$ such that $q=\frac{m_i}{n_i}$. Since $ \mu(m,n,k)=\frac mn\sin\frac\pi k+\frac1{2n}$, then 
$$\mathtt x<\mathtt x+\frac1{2n_i} \le \mu(m_i,n_i,k)=q\sin\frac\pi k+\frac1{2n_i}<\mathtt y$$
for $i$ large enough and the claim is proved.
\end{proof}

 \section{The Ljapunov-Schmidt procedure} \label{liapu-pro}
To get positive solutions to \eqref{ests},   observe that we can find solutions in $H^1(\mathbb{R}^N)$ to 
\begin{equation}
\label{ests+}
\left\{ \begin{array}{ll} 
- \lap u_1 + u_1 =f(u_1) -\Lambda A (u_1,u_2) &\text{in $\mathbb{R}^N$}\\
- \lap u_2 + u_2 =f(u_2)  -\Lambda B (u_1,u_2) &\text{in $\mathbb{R}^N$}\\
\end{array} \right.
\end{equation}
where $f(u)=u_+^{{\mathfrak p}}$, $u=u_+-u_-,$   $u_+=\max\{u,0\}$ and 
$$A(s,t):=|s|^{a_1-1} s |t|^{a_2},\quad B(s,t): =|s|^{b_1}|t|^{b_2-1}t.$$
Indeed,  multiplying the first/second  equation in \eqref{ests+} by $(u_1)_-/(u_2)_-\in H^1(\mathbb{R}^N)$, respectively,  we get immediately $u_1,\, u_2\ge0$ in $\mathbb{R}^N$ since $\Lambda>0.$\\

So we look for solutions to \eqref{ests+} as
$$u_1= U_1 +\phi_1\ \hbox{and}\ u_2= U_2+\phi_2,$$
where $U_1$  and $U_2$ are defined in \eqref{u1}-\eqref{u2} and the remainder term
$\phi=(\phi_1,\phi_2)$ is symmetric according to \eqref{roto} satisfying the $(2n+m)$ orthogonality conditions \eqref{orto1}-\eqref{orto2}.  Here the unknowns are $\phi_1,$ $\phi_2$ and the $(2n+m)-$parameters  $\alpha_j$,  $\beta_h$  and $\gamma_h$ as in \eqref{symme}.

\medskip \noindent As it is usual, the problem reduces to solve
\begin{equation}
\label{sys}
\mathscr L (\phi)+\mathscr E+\mathscr Q(\phi)=0,
\end{equation}
where  the linear operator $\mathscr L= (\mathscr L_1,\mathscr L_2)$ is defined as
\begin{equation}\label{L}
\begin{aligned}
\mathscr L_1(\phi):=& -\lap \phi_1+\phi_1- f'(U_1)\phi_1 +\Lambda \partial_s A(U_1,U_2)\phi_1+\Lambda \partial_t A(U_1,U_2)\phi_2\\
\mathscr L_2(\phi):=& -\lap \phi_2+\phi_2- f'(U_2)\phi_2 +\Lambda \partial_s B(U_1,U_2)\phi_1+\Lambda \partial_t B(U_1,U_2)\phi_2,
\end{aligned}
\end{equation} 
 the error term $\mathscr E=(\mathscr E_1,\mathscr E_2)$ is defined as
\begin{equation}\label{E}
\begin{aligned}
\mathscr E_1:=& -\lap   U_1+  U_1- f(U_1)+\Lambda A (U_1,U_2) \\
\mathscr E_2:=& -\lap   U_2+  U_2- f(U_2)+\Lambda B (U_1,U_2) 
\end{aligned}
\end{equation} 
 and  the higher order term $\mathscr Q=(\mathscr Q_1,\mathscr Q_2)$ is defined as
\begin{equation}\label{Q}
\begin{aligned}
& \mathscr Q_1(\phi):= -f(  U_1+\phi_1)+f(U_1)+f'(U_1)\phi_1\\
& +\Lambda\left(A(  U_1+\phi_1,  U_2+\phi_2)-A(U_1,U_2)-\partial_s A(U_1,U_2)\phi_1 -\partial_t A(U_1,U_2)\phi_2 \right)\\
& \mathscr Q_2(\phi):=-f(  U_2+\phi_2)+f(U_2)+f'(U_2)\phi_2  \\
& +\Lambda\left(B (  U_1+\phi_1,  U_2+\phi_2)-B(U_1,U_2)-\partial_s B(U_1,U_2)\phi_1-\partial_t B(U_1,U_2)\phi_2 \right).
\end{aligned}
\end{equation}
We will often make use of the following simple estimates
\begin{equation} \label{key}
e^{-\lambda_1 |x-p|-\lambda_2|x-q|} \leq e^{-\min\{\lambda_1,\lambda_2\}|p-q|},\quad U +|U'|=\mathcal O\Big(\frac{e^{-|x|}}{(1+|x|)^{\frac{N-1}{2}}} \Big)\hbox{ in }\mathbb{R}^N
\end{equation}
in view of \eqref{1819}.

\subsection{A linear problem}
Hereafter,  the point $p_j$, $j=1,\dots, m+2n$,  will stand for the $j-$th element of $y_1,\dots,y_{m},z_0,z_2,\dots, z_{2n-2},z_1,z_3,\dots,z_{2n-1}$. Denoting by $\cdot$ the inner product,  introduce the pair $\mathscr M(  {\bf h}):=(\mathscr M_1(  {\bf h}),\mathscr M_2 ({\bf h}))$ as follows 
\begin{equation}\label{MMM}
 \begin{aligned} &\mathscr M_1 (  {\bf h}):=\sum\limits_{i=0}^{k-1} \sum\limits_{j=1}^{m+n} (R^i_k h_j)\cdot \nabla U(x-R^i_k p_j)\\
&\mathscr M_ 2({\bf h}) :=\sum\limits_{i=0}^{k-1} 
\sum\limits_{j=m+n+1}^{m+2n} (R^i_k h_j)\cdot \nabla U(x-R^i_k p_j), \end{aligned}
\end{equation}
where ${\bf h}:=(h_1,\dots,h_{m+2n}) \in(\mathbb R\mathtt e_1)^{m+1}\times (\mathbb R\mathtt e_1+\mathbb R\mathtt e_2)^{2n-1}$.  Let $\mathscr L= (\mathscr L_1,\mathscr L_2)$ be the linear operator in \eqref{L}, where $\phi_1, \ \phi_2$  are symmetric as in \eqref{roto} and satisfy the orthogonality conditions:
\begin{equation}\label{orto1}
\int_{\mathbb{R}^N}\phi_1 \partial_1 U(x-p_j)dx=0\quad \forall \, j=1,\dots,m+1
\end{equation}
and 
\begin{equation}\label{orto2}
\int_{\mathbb{R}^N}\phi_2  \nabla U(x-p_j)dx=0 \quad \forall \,  j=m+2,\dots,m+2n. 
\end{equation}
Notice that \eqref{orto1}-\eqref{orto2} are equivalent to
\begin{equation}\label{orto}
\int_{\mathbb{R}^N}\phi_1 \  \nabla U(x-p)dx=\int_{\mathbb{R}^N}\phi_2 \ \nabla U(x-q)dx=0\quad  \forall \ p \in \Pi_1, \ q \in \Pi_2,
\end{equation}
thanks to the symmetries \eqref{roto} of $\phi_1$ and $\phi_2$.  Set
$$\mathcal P =\Big \{ \Pi \hbox{ of the form } \eqref{yj}-\eqref{symme} \hbox{ satisfying }\eqref{expansion} \Big\}, \quad \mathcal P_i=\{ \Pi_i:\, \Pi \in \mathcal P\}$$
and
$$X_{symm}=\Big\{g \in X \hbox{ satisfies }\eqref{roto}\Big\}.$$
Let $\|\cdot\|$ be a norm in $(\mathbb{R}^N)^{m+2n}$ and introduce the weighted norm
$$\|g\|_*=\|g_1\|_*+\|g_2\|_*,\quad  \|g_i\|_*=\sup_{x\in\mathbb{R}^N} \Big| (\sum_{p \in\Pi} e^{-\eta |x-p|} )^{-1} g_i(x)\Big|, \:  i=1,2,
$$
for $\eta \in (0,1)$.  Given the spaces $L^{\infty,*}_{symm}=\{(g_1,g_2) \in [L^\infty_{symm}(\mathbb{R}^N)]^2:\ \|g\|_*<\infty\}$ and
$$\mathcal H_1 =\{ \phi_1 \in H^1_{symm}(\mathbb{R}^N) \hbox{ satisfying } \eqref{orto1}\},\quad 
\mathcal H_2 =\{ \phi_2 \in H^1_{symm}(\mathbb{R}^N) \hbox{ satisfying } \eqref{orto2}\},
$$
the solvability theory for $\mathscr L$ is described as follows.
\begin{prop}\label{linear} Given $\eta \in (0,1)$, there exist $\Lambda_0,\ C>0$ so that, for any $\Lambda\geq \Lambda_0$, $\Pi \in \mathcal P$ and $g\in L^{\infty,*}_{symm}$,   there are unique solution $\phi \in \mathcal H:=\mathcal H_1 \times \mathcal H_2$ and ${\bf h}$ to
\begin{equation} \label{1520}
\mathscr L(\phi)=g+\mathscr M({\bf h})\ \hbox{in}\ \mathbb{R}^N
\end{equation}
and they satisfy
\begin{equation} \label{1835}
\|{\bf h} \|+\|\mathscr \phi \|_{*}  \leq C \|g\|_{*}.
\end{equation}
\end{prop}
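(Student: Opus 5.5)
The plan is to establish the a priori estimate \eqref{1835} by contradiction and then deduce existence and uniqueness by a Fredholm argument. For the estimate I would assume there are sequences $\Lambda_n\to+\infty$, $\Pi^n\in\mathcal P$, $g^n$ with $\|g^n\|_*\to0$, and solutions $(\phi^n,{\bf h}^n)$ of \eqref{1520} normalized by $\|\phi^n\|_*+\|{\bf h}^n\|=1$.

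The first step is to control ${\bf h}$. Test \eqref{1520} against $\partial_l U(x-p)$ for $p\in\Pi_1$ in the first equation, and for $p\in\Pi_2$ in the second. Use the orthogonality \eqref{orto} and the symmetries \eqref{roto}, and integrate by parts on $-\Delta+1-f'(U(\cdot-p))$, which annihilates $\partial_l U(\cdot-p)$. This leaves a nearly diagonal linear system for the $h_j$, with matrix $\int|\partial_1U|^2\,{\rm Id}+o(1)$ because the distances among peaks diverge by \eqref{1217}. The remaining terms are of three kinds. The terms $[f'(U_1)-f'(U(\cdot-p))]\phi_1\partial_lU(\cdot-p)$ are $o(1)\|\phi\|_*$ by \eqref{key}, since ${\mathfrak p}\ge2$ makes $f'$ locally Lipschitz. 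The $g$-terms are $O(\|g\|_*)$. The coupling terms $\Lambda\partial_sA(U_1,U_2)\phi_1$ and $\Lambda\partial_tA(U_1,U_2)\phi_2$ tested against $\partial_lU(\cdot-p)$ need real care. Here I expect the main obstacle: $\Lambda$ diverges, and one must check that $\Lambda U_1^{a_1-1}U_2^{a_2}$ and $\Lambda U_1^{a_1}U_2^{a_2-1}$, weighted by $e^{-\eta|x-q|}$ and a decaying bubble, are $o(1)$. Using \eqref{key} and $U_1\sim e^{-|x-p|}$, $U_2\sim e^{-|x-q|}$ with $|p-q|\ge\bar\ell=\mu\ell$, the size is at most $\Lambda e^{-c\,\mu\ell}$ up to polynomial factors, because all exponents are at least $c$. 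By \eqref{expansion}, $\Lambda\approx e^{(a\mu-1)\ell}$, so the product is $e^{((a-c)\mu-1)\ell}\to0$ exactly when $(a+1-c)\mu<1$. This is the upper bound in \eqref{d5}. I would also use $c>1$ so that the exponents $a_1-1$, $a_2-1$ are nonnegative, which gives integrability and lets one absorb the weight $\eta<1$. From this step, ${\bf h}^n\to0$, and then $\|\mathscr M({\bf h}^n)\|_*\to0$.

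The second step is a maximum-principle argument away from the peaks. Both coupling potentials have a good sign: $\Lambda\partial_sA\ge0$, $\Lambda\partial_tB\ge0$. The cross terms $\Lambda\partial_tA\,\phi_2$ and $\Lambda\partial_sB\,\phi_1$ are small in $\|\cdot\|_*$ by the same exponent count as above. Outside $\bigcup_{p}B_R(p)$ one has $f'(U_i)<\tfrac12(1-\eta^2)$. Hence $\sum_p e^{-\eta|x-p|}$ is a supersolution of $-\Delta+1-f'(U_i)$ there, and comparison gives $\|\phi^n\|_*\le C\sup_{\cup B_R(p)}|\phi^n|+o(1)$. So some peak $p_n$ and points $x_n\in B_R(p_n)$ carry $|\phi^n(x_n)|\ge\delta>0$.

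The third step is the blow-up. Translate $\phi^n_i(\cdot+p_n)$; by elliptic estimates it converges locally to a bounded $\phi_\infty$ solving $-\Delta\phi+\phi-{\mathfrak p}U^{{\mathfrak p}-1}\phi=0$, the coupling terms vanishing locally by the exponent count. Here one should verify that $\Lambda U_1^{a_1-1}U_2^{a_2}\to0$ on compacts near a $\Pi_1$ peak, and likewise near $\Pi_2$ peaks with $b$, using $\Lambda e^{-b_1\bar\ell}\to0$, which needs $b\mu>a\mu-1$; this holds since $b\ge a$. By nondegeneracy of $U$, $\phi_\infty\in{\rm span}\{\partial_lU\}$. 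The limit of \eqref{orto} kills it, contradicting $\delta>0$. Existence then follows as in the standard theory: in $\mathcal H$ with the orthogonal projection onto the complement of ${\rm span}\{\mathscr M\}$, \eqref{1520} becomes identity plus compact on the symmetric space, and the a priori bound gives injectivity, hence solvability. The resulting $\phi$ belongs to $L^{\infty,*}$ by the same barrier argument.
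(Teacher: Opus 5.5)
Your architecture matches the paper's: test against $\partial_l U(\cdot-p)$ to control $\mathbf h$, use the barrier $\omega=\sum_{p\in\Pi}e^{-\eta|x-p|}$ away from the peaks, blow up near a peak, then apply Fredholm theory. However, the blow-up step misses a case.

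\textbf{The missing case.} Your comparison is done outside $\bigcup_{p\in\Pi}B_R(p)$ with $\Pi=\Pi_1\cup\Pi_2$. So it only tells you that some component $\phi^n_i$ is large near some peak $p_n\in\Pi$, and you then argue as if $p_n\in\Pi_i$. If instead $p_n\in\Pi\setminus\Pi_i$ (e.g.\ $\phi_1^n$ large near a peak $z_{2h-1}$ of $U_2$), none of the ingredients you invoke is available:
\begin{itemize}
\item $U_1$ is exponentially small there, so $f'(U_1)\to0$ and the limit does not solve $-\Delta\Phi+\Phi-{\mathfrak p}U^{{\mathfrak p}-1}\Phi=0$.
\item The orthogonality \eqref{orto} for $\phi_1$ is imposed only at peaks of $\Pi_1$, so it says nothing at $p_n$.
\item The potential $\Lambda\partial_sA(U_1,U_2)=a_1\Lambda U_1^{a_1-1}U_2^{a_2}$ has size $\Lambda e^{-(a_1-1)\bar\ell}$ there, a quantity you never estimate.
\end{itemize}

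The paper treats this case separately. It sets $M=\Lambda U_1^{a_1-1}(p_0)$ (resp.\ $\Lambda U_2^{b_2-1}(p_0)$ when $i=2$) and rescales by $\sqrt M$ if $M\to\infty$. The limit is then a nontrivial bounded solution of $-\Delta\Phi+H_\infty\Phi=0$ with $H_\infty>0$, which is impossible. The contradiction comes from the sign of the coupling potential, not from orthogonality. In fact, under \eqref{d5} one has $M\to0$, because $(a-a_1+1)\mu$ and $(a-b_2+1)\mu$ are at most $(a+1-c)\mu<1$; so the limit equation is simply $-\Delta\Phi+\Phi=0$.

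Alternatively, you could avoid the case by running the comparison for $\phi_i$ only outside $\bigcup_{p\in\Pi_i}B_R(p)$. There $f'(U_i)$ is still small, and $\omega$ remains a weak supersolution across its cone points at the peaks of the other component. Either fix is short, but as written the argument does not close.

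\textbf{A secondary correction.} Your exponent count for the coupling terms is wrong, although the conclusion survives. Consider $a_2\Lambda U_1^{a_1}U_2^{a_2-1}\phi_2$ tested against $\partial_lU(\cdot-p)$ with $p\in\Pi_1$. Near $p$ it has size $\Lambda e^{-(a_2-1)\bar\ell}\|\phi\|_*$, since the weight allows $\phi_2$ to be of size $\|\phi\|_*$ there. Moreover $a_2-1<c$ always, by \eqref{ab}, so "all exponents are at least $c$" is false. The correct bound is $\Lambda e^{-(a-1)\mu\ell}\approx e^{-(1-\mu)\ell}$, as in the paper. The same applies to the $*$-norms of the cross terms in your barrier step, with $b-1\ge a-1$ for $\Lambda\partial_sB\,\phi_1$. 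What is needed is $\mu<1$, which \eqref{d5} gives since $a+1-c\ge1$. Also, $e^{((a-c)\mu-1)\ell}\to0$ is equivalent to $(a-c)\mu<1$, not to $(a+1-c)\mu<1$.
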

\begin{proof} The proof follows the arguments in \cite[Section 3]{mu-pa-wei-2012}. Let $\phi  \in \mathcal H$ so that $\mathscr L(\phi) \in L^{\infty,*}_{symm}$. Multiplying the first/second component of $\mathscr L(\phi)-\mathscr M({\bf h})$ by $ \partial_k U(x-p_s)$ if $p_s \in \Pi_1/p_s \in \Pi_2$ and integrating by parts we deduce that
$$h_s+o(\|{\bf h}\|)=\mathcal O(\| \mathscr L(\phi)-\mathscr M({\bf h})  \|_*)+o(\|\phi\|_{*})$$
since 
$$\int_{\mathbb{R}^N} \mathscr L_i(\phi)\partial_k U(x-p_s)=\mathcal O(\Lambda e^{-\mu(a-1)\ell} \|\phi\|_*)=\mathcal O(e^{-(1-\mu+o(1))\ell} \|\phi\|_*)=o(\|\phi\|_{*})$$
in view of Lemma \ref{linear-rate}, \eqref{expansion}, $a\leq b$ and $\mu<1$ (recall \eqref{d5} and $a+1-c\geq 1$). Therefore we have that
\begin{equation} \label{2043}
\|{\bf h}\|=\mathcal O(\|\mathscr L(\phi)-\mathscr M({\bf h}) \|_*)+o(\|\phi\|_{*}).
\end{equation}
Setting $ L_1:=-\lap +1-f'(U_1)+\Lambda \partial_s A(U_1,U_2)$ and $L_2:= -\lap+1-f'(U_2)+\Lambda \partial_t B(U_1,U_2)$, the   function $\omega(x):=\sum\limits_{p\in \Pi} e^{-\eta|x-p|} $ satisfies:
$$L_i \omega \geq \Big[1- \eta^2-f'(U_i)\Big] \omega \quad \hbox{in }\mathbb{R}^N\setminus \Pi$$
in view of $\Lambda \partial_s A(U_1,U_2), \Lambda \partial_t B(U_1,U_2)\geq 0$.  Since $1- \eta^2-f'(U_i) \geq \frac{1- \eta^2}{2}$ in $A_R:=\mathbb{R}^N\setminus \displaystyle \bigcup_{p \in\Pi} B(p,R)$ for $R$ large, the operator $L_i$, $i=1,2$,  satisfies the weak maximum principle in $H^1(A_R)$. Since 
$$\|L_1 (\phi_1)\|_*=\|\mathscr L_1(\phi)-\Lambda \partial_t A(U_1,U_2) \phi_2\|_{*} \leq \epsilon_1=\|\mathscr L_1(\phi)-\mathscr M_1({\bf h})\|_{*}+C\|{\bf h}\| 
+\Lambda e^{-\mu(a-1)\ell}  \|\phi_2\|_{*}$$
and
$$\|L_2 (\phi_2)\|_*=\|\mathscr L_2(\phi)-\Lambda \partial_s B(U_1,U_2) \phi_1\|_{*} \leq \epsilon_2=\|\mathscr L_2(\phi)-\mathscr M_2({\bf h})\|_{*}+C\|{\bf h}\|  +\Lambda e^{-\mu(b-1)  \ell}  \|\phi_1\|_{*}$$
in view of \eqref{1217} and \eqref{key}, one can use $\omega \in H^1(A_R)$ as a barrier to establish the pointwise estimate
\begin{equation} \label{1250}
|\phi_i|\leq \Big(\frac{2\epsilon_i}{1-\eta^2}+e^{\eta R} \sup\limits_{p \in\Pi}\|\phi_i \|_{L^\infty(\partial B(p,R))}\Big) \omega \quad \hbox{ in }\mathbb{R}^N\setminus \bigcup_{p \in\Pi} B(p,R),
 \end{equation}
where $\phi \in C_{loc}(\mathbb{R}^N)$ follows by local elliptic regularity theory. Since $\|\phi\|_*<+\infty$ in view of \eqref{1250}, the following crucial estimate
\begin{equation} \label{1550}
\|{\bf h}\|+\|\phi \|_{*} =\mathcal O\Big(|\| \mathscr L(\phi)-\mathscr M({\bf h})\|_{*}\Big) \quad \forall \, \phi \in \mathcal H \hbox{ with }\mathscr L(\phi) \in L^{\infty,*}_{symm}
\end{equation}
makes perfectly sense and we aim to prove it. We can argue by contradiction and assume the existence of sequences $\Lambda \to +\infty$,  $\Pi \in \mathcal P$, ${\bf h}$ and $\phi \in \mathcal H\cap [W^{2,N}(\mathbb{R}^N)]^2$ so that
\begin{equation} \label{1522}
\|{\bf h}\|+\|\phi \|_{*}=1, \quad   \| \mathscr L(\phi)-\mathscr M({\bf h})\|_{*}  \to 0.
\end{equation}
Up to a subsequence, there exist $i_0=1,2$ and $p_0 \in \Pi$ so that
$$\|\phi_{i_0} \|_{L^\infty( B(p_0,R))}\ge \frac{e^{-\eta R}}{4}$$
in view of $\epsilon_1+\epsilon_2 \to 0$, \eqref{2043} and \eqref{1522}  thanks to $\Lambda e^{-(a-1) \mu \ell} \to 0$.  If $p_0 \in \Pi_{i_0}$, we apply Ascoli-Arzelà's theorem to the function $ \phi_{i_0} \left(\cdot+ p_0  \right)$ to deduce, up to a subsequence, the $L^\infty_{loc}-$convergence to a non-trivial bounded solution of
$$-\Delta \Phi+\Phi-{\mathfrak p} U^{{\mathfrak p}-1} \Phi=0\ \hbox{in}\ \mathbb{R}^N, \ \int_{\mathbb{R}^N} \Phi \nabla U dx=0,$$
in view of \eqref{expansion},  \eqref{1217}, \eqref{orto} and the $L^\infty_{loc}-$convergence
\begin{eqnarray*}
\Lambda[ |\partial_s A(U_1,U_2)| +|\partial_t A(U_1,U_2)|] (\cdot+p_0) =\mathcal O( \Lambda e^{- \mu (a_2-1)\ell})=\mathcal O(e^{-(1-\mu+o(1))\ell})  \to 0 \hbox{ if }i_0=1\\
\Lambda[ |\partial_s B(U_1,U_2)| +|\partial_t B(U_1,U_2)|] (\cdot+p_0) =\mathcal O( \Lambda e^{- \mu (b_1-1)\ell}) \leq  \Lambda e^{- \mu (a_2-1)\ell} \to 0 \hbox{ if }i_0=2
\end{eqnarray*}
in view of $a=a_2 \leq b = b_1$ according to \eqref{ab} and $\mu <1$.  As discussed in \cite{mu-pa-wei-2012}, by the orthogonality conditions the function $\Phi$ must vanish and a contradiction arises in this case.  If $p_0 \in \Pi \setminus \Pi_{i_0}$,  setting
$$M:=\Lambda U_1^{a_1-1} (p_0) \hbox{ if }i_0=1,\: M:= \Lambda U_2^{b_2-1} (p_0) \hbox{ if }i_0=2,$$
by $a_2<a_1+1,b_2+1$ according to \eqref{ab},  \eqref{1819}, \eqref{expansion} and \eqref{1217}  observe that 
\begin{eqnarray*}
\Lambda |\partial_t A(U_1,U_2)| (\cdot+p_0) =\mathcal O(\Lambda e^{- \mu a_1 \ell}) =\mathcal O(\Lambda e^{- \mu (a_2-1)\ell}) \to 0 && \hbox{if }i_0=1\\ 
\Lambda|\partial_s B(U_1,U_2)| (\cdot+p_0) =\mathcal O( \Lambda e^{- \mu b_2 \ell}) =\mathcal O( \Lambda e^{- \mu (a_2-1)\ell}) \to 0 && \hbox{if }i_0=2
\end{eqnarray*}
and 
$$\left\{ \begin{array}{ll}
\displaystyle\frac{\Lambda U_1^{a_1-1} (\cdot+p_0)}{M}&\hbox{if }i_0=1\\\\
\displaystyle\frac{\Lambda U_2^{b_2-1} (\cdot+p_0)}{M}&\hbox{if }i_0=2 \end{array}\right. \to G_\infty>0$$ 
do hold  uniformly on compact sets and $M\to M_\infty\in [0,+\infty]$,  up to a subsequence. We apply the Ascoli-Arzelà's theorem to $ \phi_{i_0} (\cdot+p_0)$ if $M_\infty<\infty$ or $ \phi_{i_0} (\frac{\cdot}{\sqrt M}+p_0)$ if $M_\infty=+\infty$ to deduce, up to a subsequence,  the $L^\infty_{loc}-$convergence to a non-trivial bounded solution of
$$-\Delta \Phi+H_\infty \Phi=0 \hbox{ in } \mathbb{R}^N,$$ 
where
$$H_\infty=\left\{\begin{array}{ll} 1+a_1 M_\infty G_\infty U^{a_2}(0) \hbox{ if }M_\infty<+\infty \hbox{ or }a_1 G_\infty U^{a_2}(0)
\hbox{ if }M_\infty=\infty & \hbox{ when }i_0=1\\ 
1+b_2 M_\infty G_\infty  U^{b_1}(0) \hbox{ if }M_\infty<+\infty \hbox{ or }b_2 G_\infty U^{b_1}(0)
\hbox{ if }M_\infty=\infty & \hbox{ when }i_0=2 . \end{array} \right. $$
Since $H_\infty>0$,  bounded solutions are necessarily trivial and a contradiction arises in case $p_0 \in \Pi \setminus \Pi_{i_0}$ too.  Estimate \eqref{1550} has been established.\\

Denoted as $(-\Delta+1)_{\mathcal H_i}^{-1}(g)$,  the minimizer of
$$\frac{1}{2}\int_{\mathbb{R}^N}|\nabla \phi|^2+\frac{1}{2}\int_{\mathbb{R}^N} \phi^2- \int_{\mathbb{R}^N}g \phi, \quad \phi \in \mathcal H_i,$$
is the unique symmetric solution of $-\Delta \phi+\phi=g+\mathscr M_ i({\bf h})$ for $g \in L^\infty_{symm}(\mathbb{R}^N)$ with $\|g\|_*<+\infty$.  Equation \eqref{1520} is equivalent to find $\phi \in \mathcal H$ solving
\begin{equation} \label{1643}
(\hbox{Id}+\mathcal K)(\phi)=\tilde g
\end{equation}
where $\tilde g_i=(-\Delta+1)_{\mathcal H_i}^{-1}(g_i)$ and 
\begin{eqnarray*}
\mathcal K_1(\phi)&=&(-\Delta+1)_{\mathcal H_1}^{-1}[-f'(U_1) \phi_1+\Lambda \partial_s A(U_1,U_2)\phi_1+ \Lambda \partial_t A(U_1,U_2)\phi_2]\\
\mathcal K_2(\phi)&=&(-\Delta+1)_{\mathcal H_2}^{-1}[-f'(U_2) \phi_2+\Lambda \partial_s B(U_1,U_2)\phi_1+\Lambda \partial_t B(U_1,U_2)\phi_2].
\end{eqnarray*}
Since $\mathcal K:\mathcal H \to \mathcal H$ is a compact operator by the exponential decay of $U_1$ and $U_2$,  the Fredholm's theory provides unique solvability for \eqref{1643} because $(\hbox{Id}+\mathcal K)(\phi)=0$ has only the trivial solution in view of \eqref{1550}. The unique solution $\phi \in \mathcal H$ of \eqref{1520} satisfies the estimate \eqref{1835} for the corresponding ${\bf h}$ in view of \eqref{1550}.

\end{proof}

\subsection{The error size}

\begin{lm}\label{error}
Assume \eqref{d5} and $0<\eta<\min\{c, {\mathfrak p}-1,1\}$ with $c$ given in \eqref{557}. There exists $\Lambda_0>0$  such that for any $\Lambda\ge \Lambda_0$ there holds
\begin{equation} \label{1201}
\|\mathscr E\|_* =\mathcal O \Big(  \Lambda e^{-(c-\eta)\mu \ell}+e^{- (\min\{{\mathfrak p}-1,1\}-\eta) \ell}\Big).
\end{equation}
\end{lm}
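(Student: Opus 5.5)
The error has two independent parts, which I would estimate separately. Since each $U(\cdot-p)$ solves \eqref{bubble}, we have
$$\mathscr E_1=\Big[\sum_{p\in\Pi_1}U^{\mathfrak p}(x-p)-U_1^{\mathfrak p}\Big]+\Lambda U_1^{a_1}U_2^{a_2},\qquad \mathscr E_2=\Big[\sum_{p\in\Pi_2}U^{\mathfrak p}(x-p)-U_2^{\mathfrak p}\Big]+\Lambda U_1^{b_1}U_2^{b_2}.$$
So I have to bound in $\|\cdot\|_*$ a \emph{self-interaction} term and a \emph{coupling} term. Throughout I will use \eqref{key}, \eqref{1217}, and the fact that each point $x$ has a nearest peak $p_x$. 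The number of peaks is fixed once $(m,n,k)$ is fixed. So sums over peaks cost only constants, and the weight satisfies $\omega(x)\ge e^{-\eta|x-p_x|}$.

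\textbf{Self-interaction term.} I split $\mathbb{R}^N$ into the Voronoi-type regions $\Omega_p=\{x:|x-p|\le|x-q|\ \forall q\in\Pi_i\}$. In $\Omega_p$ write $U_1=U(x-p)+V_p$, where $V_p=\sum_{q\ne p}U(x-q)\le Ce^{-|x-p|}$ there. Then
$$\Big|U_1^{\mathfrak p}-U^{\mathfrak p}(x-p)\Big|\le C\big(U^{\mathfrak p-1}(x-p)V_p+V_p^{\mathfrak p}\big),$$
and $\sum_{q\neq p}U^{\mathfrak p}(x-q)\le CV_p^{\mathfrak p}$. Since $|x-q|\ge\rho_1/2$ for $q\neq p$, the bound $V_p\le Ce^{-\rho_1/2}e^{-(|x-q|-\rho_1/2)}$ holds. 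Combining with \eqref{key}, I get
$$U^{\mathfrak p-1}(x-p)V_p\le Ce^{-\eta|x-p|}\,e^{-(\min\{\mathfrak p-1,1\}-\eta)\rho_1}.$$
For this I distribute the exponent: use $e^{-(\mathfrak p-1-\eta)|x-p|-|x-q|}\le e^{-\min\{\mathfrak p-1-\eta,1\}|p-q|}$. The term $V_p^{\mathfrak p}$ is handled the same way. Since $\rho_1=\ell+\mathcal O(1)$, this gives the second term of \eqref{1201}. The same argument works for $U_2$, with the better separation $\rho_2$.

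\textbf{Coupling term.} Fix $x$ with nearest peak $p_x$. If $p_x\in\Pi_1$, then $U_1^{a_1}\le Ce^{-a_1|x-p_x|}$ and $U_2^{a_2}\le Ce^{-a_2|x-q|}$ for the nearest $q\in\Pi_2$, with $|p_x-q|\ge\rho$. If $p_x\in\Pi_2$, the roles are swapped. In either case
$$\Lambda U_1^{a_1}U_2^{a_2}\le C\Lambda e^{-\eta|x-p_x|}e^{-(a_1-\eta)|x-p|-a_2|x-q|}$$
for some pair $p\in\Pi_1$, $q\in\Pi_2$, where I peel off $e^{-\eta|x-p_x|}$ from whichever factor sits at $p_x$. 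By \eqref{key} the remaining factor is at most $e^{-(\min\{a_1,a_2\}-\eta)\rho}\le e^{-(c-\eta)\mu\ell+\mathcal O(1)}$, using \eqref{1217}. The same holds for $\Lambda U_1^{b_1}U_2^{b_2}$ with $\min\{b_1,b_2\}\ge c$. This yields the first term. The hypothesis $\eta<c$ guarantees that the exponents stay positive after peeling off $\eta$.

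\textbf{Main obstacle.} The delicate point is the self-interaction estimate when $\mathfrak p<2$. There $U^{\mathfrak p-1}$ decays more slowly than $U$, so the cross term must be split carefully near $p$ and near $q$. This is where the rate $\min\{\mathfrak p-1,1\}-\eta$ comes from, and where $\eta<\mathfrak p-1$ is needed. The far region $|x-p_x|\to\infty$ causes no trouble, because every estimate carries an explicit factor $e^{-\eta|x-p_x|}$, which is controlled by $\omega$. Polynomial prefactors from \eqref{1819} are simply bounded by constants, so the estimate is uniform for $\Pi\in\mathcal P$ and $\Lambda\ge\Lambda_0$.
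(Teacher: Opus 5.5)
Your proposal is correct and follows the paper's proof. Like the paper, you split $\mathscr E$ into the self-interaction part and the coupling part, and bound each pointwise using \eqref{key} and the separations \eqref{1217}, peeling off a factor $e^{-\eta|x-p|}$ to match the weight; where the paper invokes the algebraic bound \eqref{710}, you use a Voronoi decomposition, which amounts to the same thing.

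The one step you leave unjustified is that $\rho_2$ is the ``better separation''. This requires $2\mu\cos\frac{\pi}{k}> 1$, which the paper derives from the lower bound $\mu>\frac{1}{a-b+2\cos\frac{\pi}{k}}$ in \eqref{d5} together with $a\le b$. You should cite this explicitly.
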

\begin{proof}
Let us estimate  the $*-$norm of  $\mathscr E $, where
$$\mathscr E_1=-\Delta U_1+U_1-U_1^{{\mathfrak p}}+\Lambda U_1^{a_1}U_2^{a_2}, \mathscr E_ 2=-\Delta U_2+U_2-U_2^{{\mathfrak p}}+\Lambda U_1^{b_1}U_2^{b_2}.$$
Since
$$\|\Lambda U_1^{a_1}U_2^{a_2}\|_*+\| \Lambda U_1^{b_1}U_2^{b_2}\|_* =\mathcal O( \Lambda e^{-(c-\eta) \mu \ell})$$
 in view of \eqref{1217} and \eqref{key}, it is enough to estimate
\begin{eqnarray*}
 \|-\Delta U_i+U_i-U_i^{{\mathfrak p}}\|_*&=&\Big \| \sum_{p \in\Pi_i} U^{{\mathfrak p}}(x-p)-\Big(\sum_{p\in\Pi_i} U (x-p)\Big)^{{\mathfrak p}}\Big\|_* \\
&=& \mathcal O \Big( \sum_{p,q\in\Pi_i\atop p\not=q} \Big\|  U^{{\mathfrak p}-1}(x-p)U(x-q)\Big\|_*\Big)  =\mathcal O (e^{- (\min\{{\mathfrak p}-1,1\} -\eta) \rho_i})
\end{eqnarray*}
in view of \eqref{710},  where $\rho_1$ and $\rho_2$ are given in \eqref{1217}.  Since $2\mu \cos \frac{\pi}{k}>
\frac{2 \cos \frac{\pi}{k}}{a-b+2\cos \frac{\pi}{k}} \geq 1$ in view of \eqref{d5} and $a\leq b$,  we have that $\rho_2\geq \rho_1$ and then the validity of \eqref{1201} follows.
\end{proof}
\subsection{The nonlinear projected problem}
In this section we will solve the nonlinear problem (see \eqref{L}-\eqref{Q} and \eqref{MMM})
\begin{equation}\label{pro-pro}
\mathscr L(\phi)=-\mathscr E-\mathscr Q(\phi)+\mathscr M({\bf h})\ \hbox{in}\ \mathbb{R}^N\end{equation}
provided  $\Lambda$ is large enough. \\

\begin{prop}\label{nonlinear}  Assume \eqref{d5} and let $\eta>0$ be suitably small.  There exists $\Lambda_0>0$  such that for any $\Lambda\ge \Lambda_0$ 
there exist $\phi$ satisfying \eqref{orto1}-\eqref{orto2} and ${\bf h}$ solving \eqref{pro-pro}. Moreover,  $\phi$ and ${\bf h}$ depend continuously on the parameters $\alpha_j,$ $\beta_h$ and $\gamma_h$ and satisfy
\begin{equation} \label{1551}
\|{\bf h}\|+\|\phi\|_* =\mathcal O\Big(    \Lambda e^{-(c-\eta)\mu \ell}+e^{- (\min\{{\mathfrak p}-1,1\}-\eta) \ell}\Big).
\end{equation}
\end{prop}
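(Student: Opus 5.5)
We solve \eqref{pro-pro} by a fixed point argument built on Proposition~\ref{linear}. Let $T$ be the linear solution operator of Proposition~\ref{linear}, which sends $g\in L^{\infty,*}_{symm}$ to the unique pair $(\phi,{\bf h})\in\mathcal H\times\mathbb R^{m+2n}$ with $\mathscr L(\phi)=g+\mathscr M({\bf h})$; it satisfies $\|{\bf h}\|+\|\phi\|_*\le C\|g\|_*$. Then \eqref{pro-pro} is equivalent to the fixed point problem
$$\phi=\mathcal A(\phi):=T_\phi\big(-\mathscr E-\mathscr Q(\phi)\big),$$
where $T_\phi$ denotes the $\phi$-component of $T$. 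Set $\varepsilon_\Lambda:=\Lambda e^{-(c-\eta)\mu\ell}+e^{-(\min\{\mathfrak p-1,1\}-\eta)\ell}$. Lemma~\ref{error} gives $\|\mathscr E\|_*=\mathcal O(\varepsilon_\Lambda)$, and $\varepsilon_\Lambda\to0$ by \eqref{expansion} and \eqref{d5}: indeed $\log\Lambda=(a\mu-1)\ell+o(\ell)$, so the first term is $e^{-(1-(a-c)\mu-\eta\mu+o(1))\ell}$, which decays since $(a+1-c)\mu<1$ and $\eta$ is small. We work on the ball $\mathcal B=\{\phi\in\mathcal H\cap L^{\infty,*}_{symm}:\ \|\phi\|_*\le M\varepsilon_\Lambda\}$ with $M$ large, and show that $\mathcal A$ maps $\mathcal B$ into itself and is a contraction there.

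\textbf{Estimates on $\mathscr Q$.} The main work is to prove
$$\|\mathscr Q(\phi)\|_*\le C\big(\|\phi\|_*^{\min\{\mathfrak p,2\}}+\text{(coupling terms)}\big),\qquad \|\mathscr Q(\phi)-\mathscr Q(\psi)\|_*\le o(1)\|\phi-\psi\|_*$$
uniformly on $\mathcal B$. For the $f$-part, $\mathfrak p\ge2$ gives $|f(U+\phi)-f(U)-f'(U)\phi|\le C(U^{\mathfrak p-2}\phi^2+|\phi|^{\mathfrak p})$. Since $|\phi|\le\|\phi\|_*\omega$ with $\omega=\sum_{p\in\Pi}e^{-\eta|x-p|}$ and $\omega$ is bounded, we get $\omega^2\le C\omega$, so this part is $\mathcal O(\|\phi\|_*^2)$ in the $*$-norm.

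\textbf{Main obstacle: the coupling parts.} The difficult terms are $\Lambda\big(A(U_1+\phi_1,U_2+\phi_2)-A-\partial_sA\,\phi_1-\partial_tA\,\phi_2\big)$ and the analogous expression for $B$. These are large because $\Lambda\to\infty$, and $A,B$ are only $C^{1,\gamma}$ near zero since $c>1$. We split $\mathbb R^N$ into three regions.
\begin{itemize}
\item Near a peak $p\in\Pi_1$, $U_1\sim1$ and $U_2=\mathcal O(e^{-\mu\ell})$. The second-order Taylor remainder is bounded by $\Lambda(|\phi|^2+|\phi|^{a_2})\cdot(\text{powers of }U)$, and the gain $\varepsilon_\Lambda^{\min\{2,a_2\}-1}$ compensates the factor $\Lambda$. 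The case $p\in\Pi_2$ is symmetric.
\item Far from all peaks, we use the trivial bound $\Lambda(|U+\phi|^{a_1+a_2}+\dots)$ with $a_1+a_2>2c>2$. The exponential decay then beats $\Lambda$, because \eqref{d5} yields $\Lambda e^{-c\mu\ell}\to0$.
\end{itemize}
The Lipschitz estimate is obtained in the same way, using the mean value theorem on $\partial_sA$ and $\partial_tA$, which are Hölder of order $\min\{a_1,a_2,1\}-1\ge0$ plus a power, and again exploiting the smallness $\Lambda e^{-(c-1)\mu\ell}\cdot\varepsilon_\Lambda^{\,\cdot}\to0$. Making this bookkeeping consistent with \eqref{d5} is the step we expect to be hardest. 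If the crude bounds fail, the first fallback is to note that $\Lambda\varepsilon_\Lambda^{\theta}$ is small for suitable $\theta>0$ precisely because $(a+1-c)\mu<1$.

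\textbf{Conclusion.} With these estimates, $\|\mathcal A(\phi)\|_*\le C\varepsilon_\Lambda+o(\varepsilon_\Lambda)\le M\varepsilon_\Lambda$ and $\mathcal A$ is a contraction on $\mathcal B$, so Banach's fixed point theorem yields $\phi$, and then ${\bf h}$, satisfying \eqref{1551}. Continuity in $(\alpha_j,\beta_h,\gamma_h)$ follows because $U$, $\mathscr E$, $\mathscr Q$ and $T$ depend continuously on these parameters; $T$ is continuous since the estimate of Proposition~\ref{linear} is uniform in $\Pi\in\mathcal P$. A contraction depending continuously on a parameter has a fixed point that depends continuously on it.
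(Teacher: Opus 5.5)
Your overall scheme is the paper's: a contraction for $\phi\mapsto-\mathscr L^{-1}[\mathscr E+\mathscr Q(\phi)]$ on the ball of radius $M\varepsilon_\Lambda$, built on Lemma~\ref{error} and Proposition~\ref{linear}. Your treatment of the $f$-part is fine.

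The gap is the one step specific to this system, the $\Lambda$-weighted coupling remainder. You assert it rather than prove it, and part of the assertion is false.

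\textbf{The near-peak claim fails for $c>2$.} For $a_2\ge2$, your claim that ``the gain $\varepsilon_\Lambda^{\min\{2,a_2\}-1}$ compensates $\Lambda$'' amounts to $\Lambda\varepsilon_\Lambda\to0$. Take $a_1=a_2=b_1=b_2=3$, so $a=b=c=3$ and \eqref{ab} holds. Then \eqref{d5} allows any $\mu\in(\frac{1}{2\cos(\pi/k)},1)$. By \eqref{expansion}, and since $\mathfrak p\ge2$,
$\Lambda\varepsilon_\Lambda\ge\Lambda e^{-(1-\eta)\ell}=e^{(3\mu-2+\eta)\ell+\mathcal O(\log\ell)}\to\infty$ for every $\mu>\frac23$.
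The quadratic terms $\Lambda U_1^{a_1}U_2^{a_2-2}\phi_2^2$, $\Lambda U_1^{a_1-1}U_2^{a_2-1}\phi_1\phi_2,\dots$ are small only because their $U$-prefactors are $\mathcal O(e^{-(c-2)\mu\ell})$, by \eqref{key} and \eqref{1217}. That decay is the second condition in \eqref{0845}, and your argument never uses it.

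\textbf{The rest of the region analysis is incomplete.} You announce three regions but describe two. The quantity you invoke for the far region, $\Lambda e^{-c\mu\ell}\to0$, only requires $(a-c)\mu<1$. So it never sees the hypothesis $\mu<\frac1{a+1-c}$ that the proposition actually needs. You also leave the bookkeeping to a vague fallback.

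\textbf{What closes the gap.} You need the two relations $\Lambda e^{-(c-1)\mu\ell}\to0$ and $\Lambda\varepsilon_\Lambda^{c-1}\to0$, which are essentially \eqref{0845}. Both follow from $\mu<\frac1{a+1-c}$ via \eqref{expansion}:
\begin{itemize}
\item $\Lambda e^{-(c-1)\mu\ell}=e^{((a+1-c)\mu-1)\ell+\mathcal O(\log\ell)}$;
\item $\Lambda(\Lambda e^{-(c-\eta)\mu\ell})^{c-1}=e^{c((a+1-c)\mu-1)\ell+\mathcal O(\eta\ell+\log\ell)}$;
\item $\Lambda e^{-(c-1)(1-\eta)\ell}=e^{(a\mu-c)\ell+\mathcal O(\eta\ell+\log\ell)}$, where $a\mu<\frac{a}{a+1-c}\le c$ because $(c-1)(a-c)\ge0$.
\end{itemize}

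With these in hand, no region splitting is needed. Since $A\in C^1$, the coupling part of $\mathscr Q_1(\phi)-\mathscr Q_1(\psi)$ equals $\Lambda\int_0^1[\nabla A(U+\psi+\tau(\phi-\psi))-\nabla A(U)]\,d\tau\cdot(\phi-\psi)$. Bound $|\nabla A(U+\xi)|$, with $|\xi_i|\le CM\varepsilon_\Lambda$, by monomials. Each monomial is of one of two kinds:
\begin{itemize}
\item a pure-$U$ monomial, $U_1^{a_1-1}U_2^{a_2}$ or $U_1^{a_1}U_2^{a_2-1}$, which is $\mathcal O(e^{-(c-1)\rho})$ by \eqref{key};
\item a monomial containing a power $\ge c-1$ of some $|\xi_i|$, which is $\mathcal O(\varepsilon_\Lambda^{c-1})$.
\end{itemize}
This works because all the exponents $a_1-1,a_1,a_2-1,a_2$ are $\ge c-1$. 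Hence $\Lambda\sup|\nabla A(U+\xi)|\to0$, and the same holds for $B$.

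This gives a Lipschitz constant $o(1)$ for $\mathscr Q$ on the ball. Since $\mathscr Q(0)=0$, it also gives $\|\mathscr Q(\phi)\|_*=o(\varepsilon_\Lambda)$, which is exactly what your contraction step requires.
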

\begin{proof}
Denoting $\phi=\mathscr L^{-1}(g)$ in Proposition \ref{linear},  we can rewrite \eqref{pro-pro} as the fixed point problem $\phi=\mathcal T(\phi)$, where $\mathcal T(\phi):=-\mathscr L^{-1} [\mathscr E+\mathscr Q(\phi)]$. We apply  a standard contraction mapping argument to $\mathcal T$ on $B_M:=\left\{ \phi \in L^{\infty,*}_{symm} : \|\phi \|_*\leq M \epsilon \right\} $,  where $M$ is large and $\epsilon=\Lambda e^{-(c-\eta)\mu \ell}+e^{- (\min\{{\mathfrak p}-1,1\}-\eta) \ell}.$ Choosing $\eta>0$ sufficiently small, by \eqref{expansion} notice that
\begin{equation} \label{0845}
\Lambda (\epsilon^{c-1} +e^{-(c-2)\mu \ell} \epsilon)\rightarrow 0
\end{equation}
provided $\mu<\frac{1}{a+1-c}$ and 
\begin{equation} \label{0945}
 \mu<\frac{1}{a}+\frac{c-1}{a} \min\{{\mathfrak p}-1,1\}, \ \frac{1}{a+2-c}+\frac{1}{a+2-c} \min\{{\mathfrak p}-1,1\}.
\end{equation}
Observe that \eqref{0945} is automatically true when ${\mathfrak p}\geq 2 $ under assumption \eqref{d5}. \\
 
First of all,  by \eqref{Q} we have that
\begin{equation}\label{Q1}
\begin{aligned}
\mathscr Q_1(\phi)=&- \underbrace{\Big[ (  U_1+\phi_1)_+^{{\mathfrak p}}- U_1^{{\mathfrak p}}-{\mathfrak p}
U_1^{{\mathfrak p}-1}\phi_1 \Big]}_{I}\\
&+\underbrace{\Lambda \Big[|U_1+\phi_1|^{a_1-1}(U_1+\phi_1)-U_1 ^{a_1}-a_1 U_1^{a_1-1}\phi_1 \Big] |U_2+\phi_2|^{a_2}}_{II}\\
&+\underbrace{\Lambda  \Big[ |U_2+\phi_2|^{a_2}-U_2 ^{a_2}-a_2 U_2^{a_2-1}\phi_2 \Big] U_1^{a_1}}_{II}\\
&+\underbrace{
a_1\Lambda U_1^{a_1-1}\Big[|U_2+\phi_2|^{a_2}-U_2^{a_2}\Big] \phi_1}_{II}.
 \end{aligned}\end{equation}
Since $(t_1+t_2)_+^{\mathfrak p}=(t_1)_+^{\mathfrak p}+\gamma (t_1)_+^{{\mathfrak p}-1}t_2+\mathcal O(t_2^{\mathfrak p})+\underbrace{\mathcal O(t_1^{{\mathfrak p}-2}t_2^2)}_{\hbox{if }{\mathfrak p}\geq 2}$, we have that
\begin{equation}\label{Q1p}
I=\mathcal O(\phi_1^{\mathfrak p})+\underbrace{\mathcal O(U_1^{{\mathfrak p}-2}\phi_1^2)}_{\hbox{if }{\mathfrak p}\geq 2}
\end{equation} 
and then $\| I \|_*=o(\epsilon)$ uniformly for $\phi \in B_M$ since $\epsilon \to 0$ and ${\mathfrak p}>1$. Since 
\begin{eqnarray*}
&& |t_1+t_2|^{a_1-1}(t_1+t_2)-|t_1|^{a_1-1}t_1-a_1 |t_1|^{a_1-1}t_2=\mathcal O(|t_2|^{a_1})+\underbrace{\mathcal O(|t_1|^{a_1-2}t_2^2)}_{\hbox{if }a_1 \geq 2}\\
&& |t_1+t_2|^{a_2}-|t_1|^{a_2}=\mathcal  O(|t_1|^{a_2-1}|t_2|+|t_2|^{a_2})=a_2 |t_1|^{a_2-2}t_1 t_2 +\mathcal O(|t_2|^{a_2})+\underbrace{\mathcal O(|t_1|^{a_2-2}t_2^2)}_{\hbox{if }a_2\geq 2},
\end{eqnarray*}
we deduce that
\begin{eqnarray} \label{Q1pp}
II &=&\Lambda \mathcal O\Big( U_2^{a_2} |\phi_1|^{a_1}+ U_1^{a_1} |\phi_2|^{a_2}+ |\phi_2|^{a_2} |\phi_1|^{a_1}+U_1^{a_1-1}U_2^{a_2-1} |\phi_1| |\phi_2|+U_1^{a_1-1} |\phi_1| |\phi_2|^{a_2}\Big)\nonumber \\
&&+\Lambda \mathcal O \Big( \underbrace{ U_1^{a_1-2}U_2^{a_2}  |\phi_1|^2+U_1^{a_1-2} |\phi_2|^{a_2}  |\phi_1|^2}_{\hbox{if }a_1 \geq 2}
+ \underbrace{ U_1^{a_1} U_2^{a_2-2} |\phi_2|^2}_{\hbox{if }a_2 \geq 2}\Big)
\end{eqnarray}
and then
$$\|II\|_*=\Lambda \mathcal O( \|\phi_1\|_*^{a_1}+\|\phi_2 \|_*^{a_2}+e^{-(c-2) \mu \ell} \|\phi_1\|_*^2
+e^{-(c-2)  \mu \ell} \|\phi_2\|_*^2)=o(\epsilon)$$ 
uniformly for $\phi \in B_M$ in view of \eqref{0845}.  Collecting the estimates on $I$ and $II$ we have that $\|\mathscr Q_1(\phi)\|_*=o(\epsilon)$ uniformly for $\phi \in B_M$ and the same is true for $\mathscr Q_2(\phi)$ by a similar argument.  Since $\|\mathscr L^{-1} (\mathscr E)\|_* =O( \|\mathscr E)\|_*)\leq C \epsilon$ in view of Proposition \ref{linear}, we can take $M=2C$ and then $\mathcal T:B_M\to B_M$ in view of $\|\mathcal T(\phi)\| \leq C\epsilon+o(\epsilon) \leq M\epsilon$ for all $\phi \in B_M$. \\

Next,  arguing as above,  for the quantities
$$
\begin{aligned}
&\mathscr Q_1(\phi)-\mathscr Q_1(\bar\phi)=(U_1+\bar\phi_1)_+^{\mathfrak p}  - ( U_1+\phi_1)_+^{\mathfrak p}+{\mathfrak p} U_1^{{\mathfrak p}-1}(\phi_1-\bar\phi_1)\\
& +\Lambda\Big[|U_1+\phi_1|^{a_1-1}(U_1+\phi_1) |U_2+\phi_2|^{a_2}-|U_1+\bar \phi_1|^{a_1-1}(U_1+\bar \phi_1) |U_2+\bar \phi_2|^{a_2}\\
&-a_1 U_1^{a_1-1} U_2^{a_2}(\phi_1-\bar\phi_1) -a_2 U_1^{a_1} U_2^{a_2-1}(\phi_2-\bar\phi_2)\Big]  \\
 \end{aligned}$$
 and 
$$\begin{aligned}
&\mathscr Q_2(\phi)-\mathscr Q_2(\bar\phi)=(U_2+\bar\phi_2)_+^{\mathfrak p}-(U_2+\phi_2)_+^{\mathfrak p}+{\mathfrak p}U_2^{{\mathfrak p}-1}(\phi_2-\bar\phi_2)\\
& +\Lambda\Big[|U_1+\phi_1^{b_1}|U_2+\phi_2|^{b_2-1}(U_2+\phi_2)-|U_1+\bar \phi_1^{b_1}|U_2+\bar \phi_2|^{b_2-1}(U_2+\bar \phi_2)\\
&-b_1  U_1^{b_1-1} U_2^{b_2}(\phi_1-\bar\phi_1)-b_2 U_1^{b_1} U_2^{b_2-1} (\phi_2-\bar\phi_2)\Big]  \\
 \end{aligned}$$
we can prove $\|\mathcal T(\phi)-\mathcal T(\bar \phi)\|=O(\| \mathscr Q(\phi)- \mathscr Q(\bar\phi)\|_*)  \leq \mathtt L \|\phi-\bar\phi\|_*$ for any $\phi,  \bar\phi \in B$, for some $\mathtt L\in(0,1)$, in view of \eqref{0845} and the mean value theorem. Hence,  we can find $\Lambda_0>0$ large such that the map $\mathcal T:B_M\to B_M$ is a contraction for any $\Lambda\ge \Lambda_0$. By the contraction mapping principle a unique fixed point $\phi \in B_M$ for $\mathcal T$ exists, yielding a solution $\phi$ and ${\bf h}$ to \eqref{pro-pro} satisfying \eqref{orto1}-\eqref{orto2} and \eqref{1551}. The continuous dependence of the solution $\phi,\ {\bf h} $ upon $\alpha_j, \beta_j$ and $\gamma_j$ is standard (e.g.  \cite{mu-pa-wei-2012}).
\end{proof}

\section{The reduced problem} \label{sec1122}
In this section we aim to obtain expansions for the projection of the error $\mathscr E $ onto the kernel. 
We  decompose $\mathscr E$ in \eqref{E} into a {\em inner} part $\mathscr I$ and an {\em outer} part $\mathscr O$ as follows
\begin{equation} \label{1026}
\mathscr E_1:=\underbrace{\sum_{p\in\Pi_1}U^{{\mathfrak p}}(x-p)-U_1^{{\mathfrak p}}}_{={\mathscr I}_1}+\underbrace{\Lambda U_1^{a_1}U_2^{a_2}}_{={\mathscr O}_1},\quad \mathscr E_2:=\underbrace{\sum_{p\in\Pi_2}U^{{\mathfrak p}}(x-p)-U_2^{{\mathfrak p}}}_{={\mathscr I}_2}+\underbrace{\Lambda U_1^{b_1}U_2^{b_2}}_{={\mathscr O}_2}
\end{equation}
in view of \eqref{bubble} and \eqref{u1}-\eqref{u2}. In the following we agree that  
$D_\Lambda:=D_\Lambda( \boldsymbol \alpha, \boldsymbol \beta, \boldsymbol \gamma)$   
and  $Q_\Lambda:=Q_\Lambda(\boldsymbol \alpha, \boldsymbol \beta, \boldsymbol \gamma)$ denote continuous expressions which are 
linear and quadratic, respectively, in $\boldsymbol \alpha, \boldsymbol \beta, \boldsymbol \gamma$ and $\delta>0$ is a small number independent of 
$\boldsymbol \alpha, \boldsymbol \beta, \boldsymbol \gamma$. They might vary from line to line.\\

Recalling the definition of $\mathtt t$ and $\mathtt n$ in \eqref{vect},  introduce their $x_2-$reflections 
\begin{equation}\label{tstar}
\mathtt t^*:=-\sin\frac\pi k \mathtt e_1-\cos\frac\pi k \mathtt e_2,\quad \mathtt n^*:=\cos\frac \pi k \mathtt e_1-\sin\frac\pi k \mathtt e_2, \end{equation}
and notice that
\begin{equation}\label{rotate}
R_{-k}\mathtt t=-\mathtt t^*, \quad R_{-k}\mathtt n=\mathtt n^*.
 \end{equation}

\subsection{The outer error}
First let us make expansions for the {\em outer error} $\mathscr O$. As already explained in the Introduction,  these estimates are new and delicate because of the non-linear interaction among peaks.

\begin{lm}\label{new-1} 
 ${}$\\
As $\Lambda \to +\infty$ the first component ${\mathscr O}_1$ satisfies
\begin{eqnarray*}
\frac{\int\limits_{\mathbb R^N} {\mathscr O}_1(x) \nabla U(x-y_i) dx}{ \Psi^{(0)}(\ell)}  &=& \mathcal O\Big(  e^{- \delta \ell}\Big)\\
\frac{\int\limits_{\mathbb R^N} {\mathscr O}_1(x) \nabla U(x-y_{m+1})  dx}{\Lambda \Psi^{(1)}(\bar \ell) }&=& [1
-  a_2 ( \beta_1+\alpha_{m+1} \sin \frac{\pi}{k})+\mathfrak c_2 \gamma_1] \mathtt e_1\\
&&+o(1)D_\Lambda+\mathcal O(e^{- \delta \bar\ell^\frac{1}{4}}+\bar \ell^K Q_\Lambda^\frac{\theta}{2}) \\
\frac{\int_{\mathbb{R}^N} {\mathscr O}_1(x) \nabla U(x -z_{2h})  dx}{\Lambda \Psi^{(1)}(\bar \ell) } &=& \mathfrak c_3 (\beta_{2h+1}+\beta_{2h-1}-2\beta_{2h}) \mathtt t+
\mathfrak c_4 (\gamma_{2h+1}+\gamma_{2h-1}-2 \gamma_{2h}) \mathtt n\\
&&+o(1)D_\Lambda+\mathcal O( e^{-\delta\bar\ell^\frac{1}{4}}+\bar \ell^K Q_\Lambda^\frac{\theta}{2})
\end{eqnarray*}
for  $i=1, \dots, m$ and $h=1, \dots, n-1$,  where $K>0$ is large and  (see \eqref{psiuno})
$$\Psi^{(1)}(\bar\ell)=\bar \ell^{-\frac{N-1}{2}a_2} e^{-a_2 \bar \ell }(\mathfrak c_1+o(1)),\ \mathfrak c_1>0.$$

As $\Lambda \to +\infty$ the second component ${\mathscr O}_2$ satisfies
\begin{eqnarray*}
\frac{\int\limits_{\mathbb R^N}{\mathscr O}_2(x) \nabla U(x -z_{1}) dx}{\Lambda\Psi^{(2)}(\bar\ell)} &=& \mathfrak c_5 (\beta_{2}-2\beta_1-\alpha_{m+1}\sin\frac\pi k) \mathtt t+\mathfrak c_6 (\gamma_2-2\gamma_1) \mathtt n\\
&&+o(1)D_\Lambda+\mathcal O(e^{-\delta \bar\ell^\frac{1}{4}}+\bar \ell^K Q_\Lambda^\frac{\theta}{2})\\
\frac{\int\limits_{\mathbb R^N}  {\mathscr O}_2(x)  \nabla U(x -z_{2h-1})   dx}{\Lambda\Psi^{(2)}(\bar\ell)} &=& \mathfrak c_5 (\beta_{2h}+\beta_{2h-2}-2\beta_{2h-1}) \mathtt t+\mathfrak c_6 (\gamma_{2h}+\gamma_{2h-2}-2 \gamma_{2h-1}) \mathtt n\\
&&+o(1)D_\Lambda+\mathcal O(e^{-\delta \bar\ell^\frac{1}{4}}+\bar \ell^K Q_\Lambda^\frac{\theta}{2})
\end{eqnarray*}
for $h=2,\dots, n$,  where $\Psi^{(2)}(\bar\ell)=\bar \ell^{-\frac{N-1}{2}b_1} e^{-b_1 \bar \ell }$ (see \eqref{psidue}). Here $\mathfrak c_3, \dots, \mathfrak c_6 \not=0$ and $\theta:=\min\{a_2,2\}>1.$  
 
\end{lm}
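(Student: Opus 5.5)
The plan is to localize each projection near the peak $p$ and then compute the local integral by Taylor expansion in the translation parameters. Write $\mathscr O_1=\Lambda U_1^{a_1}U_2^{a_2}$ and, for $p\in\Pi_1$, split $U_1=U(x-p)+\sum_{q\in\Pi_1\setminus\{p\}}U(x-q)$ and $U_2=\sum_{q\in\Pi_2^p}U(x-q)+(\text{rest})$. By \eqref{key} and \eqref{1217}, every contribution containing a non-closest peak, or a power $U^{a_1}(x-q)$ with $q\ne p$, carries an extra factor $e^{-\delta\ell}$ relative to $\Lambda\Psi^{(1)}(\bar\ell)$. This uses \eqref{ab} and \eqref{d5}: $a_2<a_1+1$ and $c>1$ make the competing products decay strictly faster. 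Hence
$$\int \mathscr O_1\nabla U(x-p)=\frac{\Lambda}{a_1+1}\int \Big[\sum_{q\in\Pi_2^p}U(x+p-q)\Big]^{a_2}\nabla U^{a_1+1}(x)\,dx+\text{lower order}.$$
For $p=y_i$ with $i\le m$, the set $\Pi_2^p$ is far: $|z_1-y_m|/\bar\ell>1$ by \eqref{2001}. Comparing $\Lambda e^{-a|z_1-y_m|}$ with $\Psi^{(0)}(\ell)$ through \eqref{expansion} then gives the bound $\mathcal O(e^{-\delta\ell})\Psi^{(0)}(\ell)$, exactly as in the discussion after \eqref{1623}.

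The core is an asymptotic formula for the two-peak integral
$$F(\xi_1,\xi_2)=\int\big[U(x-\xi_1)+U(x-\xi_2)\big]^{a_2}\nabla U^{a_1+1}(x)\,dx,\qquad |\xi_i|=\bar\ell+\mathcal O(1),$$
which fails to superpose because $a_2>1$. I would split $\mathbb R^N$ into three regions: a neighbourhood $|x|\le\bar\ell^{1/4}$ of the origin, neighbourhoods of $\xi_1,\xi_2$, and the rest. Near the origin, use $U(x-\xi)=U(|\xi|)e^{x\cdot\xi/|\xi|}(1+o(1))$ from \eqref{1819}. Then $[U(x-\xi_1)+U(x-\xi_2)]^{a_2}\approx \Psi$-scale$\times\big(e^{x\cdot\nu_1-s_1}+e^{x\cdot\nu_2-s_2}\big)^{a_2}$, where $\nu_i=\xi_i/|\xi_i|$ and $s_i=|\xi_i|-\bar\ell$. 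Near the origin the integral therefore becomes the explicit function
$$G(s,\nu)=\int (e^{x\cdot\nu_1-s_1}+e^{x\cdot\nu_2-s_2})^{a_2}\nabla U^{a_1+1}\,dx,$$
which converges since $U^{a_1+1}$ decays like $e^{-(a_1+1)|x|}$ and $a_2<a_1+1$. The cut-off tails cost $e^{-\delta\bar\ell^{1/4}}$, which is where that error term originates. Away from the origin, \eqref{key} with $\min\{a_1+1,a_2\}$ gives contributions that are exponentially smaller.

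One then linearizes $G$ in the perturbations: $s_i$ is linear in $\alpha_{m+1},\beta_h,\bar\ell\gamma_h$, and $\nu_i$ moves by $\mathcal O(\bar\ell^{-1})\cdot$(tangential shift) $+\gamma$. The remainder is estimated using the Hölder regularity of $t\mapsto t^{a_2}$ only up to order $\theta=\min\{a_2,2\}$, which produces $\bar\ell^KQ_\Lambda^{\theta/2}$. At the unperturbed configuration for $y_{m+1}$ the two directions are $\mathtt t$ and $-\mathtt t^*$, by \eqref{rotate}. By symmetry the $\mathtt e_2$ parts cancel and the $\mathtt e_1$ part gives $-\Psi^{(1)}(\bar\ell)\mathtt e_1$. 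Differentiating gives the coefficient $-a_2(\beta_1+\alpha_{m+1}\sin\frac\pi k)$, since the distance to $z_1$ changes by $\beta_1+\alpha_{m+1}\sin\frac\pi k$ at first order; the $\gamma_1$ term comes from the change in direction. For $z_{2h}$, and for $\mathscr O_2$ at $z_{2h-1}$, both neighbours lie on a line and the zeroth-order forces cancel. Only the first-order differences survive, in the combinations $\beta_{h+1}+\beta_{h-1}-2\beta_h$ and the analogous one for $\gamma$. For $z_1$, its neighbour $z_0$ moves by $\alpha_{m+1}\mathtt e_1$, whose $\mathtt t$-component is $-\alpha_{m+1}\sin\frac\pi k$. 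The symmetry \eqref{symme} makes the $R_{-k}$-images consistent. The constants $\mathfrak c_3,\dots,\mathfrak c_6$ are derivatives of explicit integrals of the type of $G$, and their nonvanishing follows from monotonicity of the radial profiles.

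The main obstacle is this nonlinear two-peak expansion. Specifically, one must show that the cross terms of $(e^{x\cdot\nu_1}+e^{x\cdot\nu_2})^{a_2}$ remain negligible. They are: when $\nu_1\ne\pm\nu_2$ the overlap region lies at distance $\sim\bar\ell$ from both peaks, and the angle-$\pi/k$ geometry together with \eqref{fund} makes its weight smaller by $e^{-\delta\bar\ell}$. This explains why the leading terms agree with formal superposition. I would first prove a pointwise bound of the form $(A+B)^{a_2}-A^{a_2}-B^{a_2}\le C\min(A,B)\max(A,B)^{a_2-1}$ and integrate it against $|\nabla U^{a_1+1}|$ using \eqref{key}.
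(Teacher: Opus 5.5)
Your overall route is the paper's. You localize to the nearest peaks via the Appendix estimate \eqref{us1}, cut off at $|x|\le\bar\ell^{1/4}$ using \eqref{1148}, replace $U(x-\xi)$ by its exponential asymptotics, Taylor-expand in the translation parameters with a remainder of order $\theta=\min\{a_2,2\}$, and use reflection symmetries for the zeroth-order terms. Your $G$ is the right limiting object. The gap is the step you single out as the main obstacle. The cross terms of $(e^{\langle x,\nu_1\rangle}+e^{\langle x,\nu_2\rangle})^{a_2}$ are not negligible, and no factor $e^{-\delta\bar\ell}$ can be extracted from them. After factoring out $\bar\ell^{-\frac{N-1}{2}a_2}e^{-a_2\bar\ell}$ this weight does not depend on $\bar\ell$ at all. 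The integrand $(e^{\langle x,\nu_1\rangle}+e^{\langle x,\nu_2\rangle})^{a_2}U^{a_1}\nabla U$ lives on $|x|=\mathcal O(1)$, since it decays like $e^{-(a_1+1-a_2)|x|}$. There both exponentials are of order one, and they coincide at $x=0$ because $|\xi_1|=|\xi_2|=\bar\ell+\mathcal O(1)$. So the \emph{overlap region} is exactly where the mass sits. Concretely, $(A+B)^{a_2}-A^{a_2}-B^{a_2}=(2^{a_2}-2)A^{a_2}$ when $A=B$. Your pointwise bound, integrated against $|\nabla U^{a_1+1}|$, therefore only gives $\mathcal O(\bar\ell^{-\frac{N-1}{2}a_2}e^{-a_2\bar\ell})$, the same order as $\Psi^{(1)}(\bar\ell)$. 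Neither the angle $\pi/k$ nor \eqref{fund} produces any smallness here; \eqref{fund} only decides which peaks dominate the force on $z_{2h-1}$. If you replaced $G$ by the sum of single-peak integrals, you would get the wrong limit $\mathfrak c_1$ for $\Psi^{(1)}$, which \eqref{1604} defines as the full nonlinear integral, and wrong $\mathfrak c_2,\dots,\mathfrak c_6$. That $\mathcal O(1)$ relative error cannot be absorbed into $o(1)D_\Lambda+\mathcal O(e^{-\delta\bar\ell^{1/4}}+\bar\ell^KQ_\Lambda^{\theta/2})$.

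The remedy is to drop that step and keep the nonlinear weight; nothing needs to be negligible. In the paper the constants are genuinely non-superposed. For instance $\mathfrak c_1=c_N^{a_2}\int(e^{\langle x,\mathtt t^*\rangle}+e^{\langle x,\mathtt t\rangle})^{a_2}U^{a_1}\partial_1U$, and $\mathfrak c_3$ carries the factor $(a_2-1)(e^{\langle x,\mathtt t\rangle}-e^{-\langle x,\mathtt t\rangle})^2+(e^{\langle x,\mathtt t\rangle}+e^{-\langle x,\mathtt t\rangle})^2$. What matches formal superposition is only three things. First, the scale $\bar\ell^{-\frac{N-1}{2}a_2}e^{-a_2\bar\ell}$. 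Second, the direction, which the reflections force. Third, the coefficient $-a_2$ of $\beta_1+\alpha_{m+1}\sin\frac\pi k$, which comes from homogeneity: for $s_1=s_2=s$ one has $G(s,\nu)=e^{-a_2 s}G(0,\nu)$. The signs of the constants then follow from integration by parts, as in the paper. Equivalently, symmetrize in $x_1$ for $\mathfrak c_1$ and in $\langle x,\mathtt t\rangle$ for $\mathfrak c_3,\mathfrak c_5$, and use $U'<0$; this is what your monotonicity remark must mean, applied to the nonlinear weight. Also, Taylor-expand the true integrand around the starred configuration first, as in \eqref{taylor}, and only then pass to the exponential limit in the coefficients of the linear terms. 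If you replace the integral by $G$ before linearizing, the zeroth-order term picks up an absolute $o(1)$ error instead of being exactly $\Psi^{(1)}(\bar\ell)\mathtt e_1$. Finally, two sign slips. By \eqref{1617}--\eqref{1618} the neighbour directions at $y_{m+1}$ are $\mathtt t$ and $\mathtt t^*$, not $-\mathtt t^*$; with $-\mathtt t^*$ the $\mathtt e_1$-components would cancel instead of the $\mathtt e_2$ ones. And with the normalization in the statement, the leading term is $+\Psi^{(1)}(\bar\ell)\mathtt e_1$ with $\Psi^{(1)}(\bar\ell)>0$.
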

\begin{proof}
First of all,  in the Appendix we show that
\begin{equation}\label{us1}
\int_{\mathbb{R}^N} {\mathscr O}_1(x)\nabla U(x -s) dx =\Lambda {\boldsymbol\Psi}^{(1)} (s)+ \mathcal O(\Lambda e^{-(1+\delta)a_2\bar\ell}) \end{equation}
when $s\in\Pi_1$ and
\begin{equation}\label{us2}
\int_{\mathbb{R}^N} {\mathscr O}_2(x)\nabla U(x -s) dx= \Lambda {\boldsymbol\Psi}^{(2)} (s)+ \mathcal O( \Lambda e^{-(1+\delta)b_1\bar\ell})
\end{equation}
when $s\in\Pi_2$. \\


Now, let us estimate the  leading term ${\boldsymbol\Psi}^{(i)}(s),$ for $s\in \Pi_i$ and $i=1,2.$
\\

{\em The first component.}\\
If \fbox{$s=y_1,\dots,y_m$} then
$|s-q|\geq (1+\delta) \ell$  for any $q\in \Pi_2^s$ and  ${\boldsymbol\Psi}^{(1)}(y_i)= \mathcal O(e^{-(1+\delta) a_2\ell})$.\\

The case   \fbox{$s=z_{2h} $}, 
$0\leq h \leq n-1,$  is much more involved. Since $\Pi_2^s=\{z_{2h-1},z_{2h+1} \}$ the leading term reduces to
$${\boldsymbol\Psi}^{(1)} (z_{2h})= \int_{\mathbb{R}^N} \Big[U(x-p_1)+U(x-p_2) \Big]^{a_2} U^{a_1}\nabla U,$$
where $p_1=z_{2h-1}-z_{2h}$ and $p_2=z_{2h+1}-z_{2h}$. First of all,  let us perform the following Taylor expansion of ${\boldsymbol\Psi}^{(1)}$ around $z^*_{2h}$:
\begin{equation}\label{taylor} {\boldsymbol\Psi}^{(1)}(z_{2h}) ={\boldsymbol\Psi}^{(1)}(z^*_{2h})-a_2  \mathtt D {\boldsymbol\Psi}^{(1)} (z^*_{2h})+\mathcal O( e^{-a_2 \bar \ell-\delta \bar \ell^\frac{1}{4}} +\bar \ell^\theta e^{-a_2 \bar \ell} Q_\Lambda^\frac{\theta}{2})
\end{equation}
with
$$\begin{aligned}
{\boldsymbol\Psi}^{(1)}(z^*_{2h})&= \int_{\mathbb{R}^N} \left[U(x-p_1^*)+U(x-p_2^*)\right]^{a_2} U^{a_1}\nabla U, \\
\mathtt D {\boldsymbol\Psi}^{(1)} (z^*_{2h})&=\int_{\mathbb{R}^N}  [U(x-p_1^*)+U(x-p_2^*)]^{a_2-1}[\langle \nabla U(x-p_1^*), \mathfrak {d} p_1 \rangle+\langle \nabla U(x-p_2^*), \mathfrak {d} p_2 \rangle] U^{a_1}\nabla U,\end{aligned}
$$
where  $p_1^*=z^*_{2h-1}-z^*_{2h}$, $p_2^*=z^*_{2h+1}-z^*_{2h}$ and  $\mathfrak {d} p_i=p_i-p_i^*$. Hereafter, we will often use that $\bar \ell^m Q_\Lambda=\mathcal O(1)$ for any $m$, property which will be true a posteriori because of the exponential smallness of $\boldsymbol \alpha$,  $\boldsymbol \beta$ and $\boldsymbol \gamma$ in $\bar \ell$.\\

Since
\begin{eqnarray}\label{1617}
p_1=\left\{ \begin{array}{ll} (\bar \ell +\beta_1) \mathtt t^* +\bar \ell \gamma_1 \mathtt n^*-\alpha_{m+1}\mathtt e_1 &\hbox{if }h=0\\
-(\bar \ell +\beta_{2h}-\beta_{2h-1})\mathtt t +\bar \ell (\gamma_{2h-1}-\gamma_{2h}) \mathtt n &\hbox{if }h\geq 1 \end{array} \right. 
\end{eqnarray}
and
\begin{eqnarray}\label{1618}
p_2=\left\{\begin{array}{ll}(\bar \ell +\beta_1 )\mathtt t +\bar \ell \gamma_1 \mathtt n-\alpha_{m+1}\mathtt e_1&\hbox{if }h=0\\
(\bar \ell +\beta_{2h+1} -\beta_{2h})\mathtt t +\bar \ell (\gamma_{2h+1}-\gamma_{2h}) \mathtt n
&\hbox{if }h \geq 1 \end{array} \right.
\end{eqnarray}
in view of $z_{-1}=R_{-k}z_{2n-1}$, \eqref{symme} and \eqref{rotate},  then $p_1$ and $p_2$ satisfy 
\begin{eqnarray}
|p_1|= |p_2|=\bar \ell +\beta_1+\alpha_{m+1} \sin \frac{\pi}{k}+\mathcal O\left( \bar \ell Q_\Lambda\right) \label{1619}
\end{eqnarray}
if $h=0$ and
\begin{eqnarray}
 |p_1|=\bar \ell +\beta_{2h}-\beta_{2h-1}+\mathcal O\left(\bar \ell Q_\Lambda\right),
|p_2|=\bar \ell + \beta_{2h+1} -\beta_{2h}+\mathcal O\left(\bar \ell Q_\Lambda\right)
\label{1620}
\end{eqnarray}
if $h \ge1$.  Next, by a Taylor expansion around $p_i^*$:
$$U(x-p_i)=U(x-p_i^*)- \langle \nabla U(x-p_i^*), \mathfrak {d} p_i \rangle+\mathcal  O(|\mathfrak {d} p_i|^2 U(x-p_i^*)),\ i=1,2,$$
in view of \eqref{cpabg} and $e^{|x-q|-|x-r|}\leq e^{|r-q|}$,  we deduce that 
\begin{eqnarray*}
\Big[U(x-p_1)+U(x-p_2)\Big]^{a_2}&=& \Big[U(x-p_1^*)+U(x-p_2^*)\Big]^{a_2}\\
&&-a_2 \Big[U(x-p_1^*)+U(x-p_2^*)\Big]^{a_2-1} \sum_{j=1}^2 \langle \nabla U(x-p_j^*), \mathfrak {d} p_j \rangle\\
&&+\Big[U(x-p_1^*)+U(x-p_2^*)\Big]^{a_2} \mathcal  O(\bar \ell^\theta Q_\Lambda^\frac{\theta}{2})
\end{eqnarray*}
for $|x| \leq \bar \ell^\frac{1}{4}$ and $\theta=\min\{a_2,2\}$ thanks to \eqref{1617}-\eqref{1618}. Therefore, there holds
 \begin{align*}
 {\boldsymbol\Psi}^{(1)}(z_{2h})	 &=\int_{\{|x|\le\bar\ell^\frac14\}}[U(x-p_1^*)+U(x-p_2^*)]^{a_2} U^{a_1} \nabla U\\
& -a_2\int_{\{|x|\le\bar\ell^\frac14\}}
 [U(x-p_1^*)+U(x-p_2^*)]^{a_2-1}\sum_{j=1}^2  \langle \nabla U(x-p_j^*), \mathfrak {d} p_j \rangle
U^{a_1} \nabla  U\\
&+\int_{\{|x|\le\bar\ell^\frac14\}}[U(x-p_1^*)+U(x-p_2^*)]^{a_2} U^{a_1} \nabla U\mathcal  O(\bar \ell^\theta Q_\Lambda^\frac{\theta}{2})\\
&+\int_{\{|x|\ge\bar\ell^\frac14\}} \left[U(x-p_1)+U(x-p_2)\right]^{a_2}U^{a_1} \nabla U\\
&=   {\boldsymbol\Psi}^{(1)}(z^*_{2h})-a_2  \mathtt D{\boldsymbol\Psi}^{(1)}(z^*_{2h})+\mathcal O\Big(\sum_{j=1}^2 \int_{\{|x|\ge\bar\ell^\frac14\}}[U^{a_2}(x-p_j^*)+U^{a_2}(x-p_j)] U^{a_1}| \nabla U| \Big)
\\
&+\mathcal  O\Big (  \bar \ell^\theta Q_\Lambda^\frac{\theta}{2} \int_{\{|x|\le\bar\ell^\frac14\}}[U^{a_2}(x-p_1^*)+U^{a_2}(x-p_2^*)] U^{a_1}| \nabla U| \Big)\\
&= {\boldsymbol\Psi}^{(1)}(z^*_{2h})-a_2 \mathtt D {\boldsymbol\Psi}^{(1)}(z^*_{2h})+\mathcal O( e^{-a_2 \bar \ell-\delta \bar \ell^\frac{1}{4}} +\bar \ell^\theta e^{-a_2 \bar \ell} Q_\Lambda^\frac{\theta}{2})
\end{align*}
because  of \eqref{1619}-\eqref{1620} and
\begin{equation} \label{1148}
U^{a_2}(x-p) U^{a_1}(x)|\nabla  U(x)|=\mathcal O\left(\frac{e^{-a_2 |p|}  e^{-(a_1+1-a_2)|x|}}{(1+|x-p|)^{\frac{N-1}{2}a_2} (1+|x|)^\frac{(N-1)(a_1+1)}{2}}\right)
\end{equation}
in view of  \eqref{0849}  and \eqref{key}. Once \eqref{taylor} has been established,  we have to expand ${\boldsymbol\Psi}^{(1)}(z^*_{2h})$ and $ \mathtt D {\boldsymbol\Psi}^{(1)}(z^*_{2h})$ when $h=0$ and $h\ge1.$
\\


First of all, let us consider the case \fbox{$h=0$}.
Since $\mathtt t_1=(\mathtt t^*)_1$, $\mathtt t_2=-(\mathtt t^*)_2$, $(\mathfrak {d} p_1)_1=(\mathfrak {d} p_2)_1$ and $(\mathfrak {d} p_1)_2=-(\mathfrak {d} p_2)_2$ in view of \eqref{vect} and \eqref{tstar}-\eqref{rotate},  by oddness under the reflection $x \to (x_1,\dots,-x_i,\dots, x_N)$, $i \geq 2$, notice that
\begin{equation}\label{1604}
{\boldsymbol\Psi}^{(1)}(z^*_{0})=\Psi^{(1)}(\bar \ell) \mathtt e_1, \quad  \Psi^{(1)}(\bar \ell):=
\int_{\mathbb{R}^N} \left[U(x-\bar \ell \mathtt t^*)+U(x-\bar \ell \mathtt t)\right]^{a_2} U^{a_1} \partial_1 U,
\end{equation}
and $\mathtt D {\boldsymbol\Psi}^{(1)}(z^*_{0})=\mathtt D _0(\bar\ell)  \mathtt  e_1$ with
$$ \mathtt D _0(\bar\ell)= \int_{\mathbb{R}^N}  [U(x-\bar \ell \mathtt t^*)+U(x-\bar \ell \mathtt t)]^{a_2-1}
\left[\langle \nabla U(x-\bar \ell \mathtt t^*), \mathfrak {d} p_1 \rangle+\langle \nabla U(x-\bar \ell \mathtt t), \mathfrak {d} p_2 \rangle\right] U^{a_1}\partial_1 U.
$$
The coefficient $\Psi^{(1)}(\bar \ell)$
plays a crucial role in the balance condition at $s=z_0^*=y_{m+1}^*$ and its expansion will be   necessary for the appropriate choice of $\ell$ and $\bar \ell$. We claim that
\begin{equation}\label{psi1barell-2}\Psi^{(1)}(\bar \ell)=\bar \ell^{-\frac{N-1}{2}a_2} e^{-a_2 \bar \ell}\left(\mathfrak c_1+o(1)\right),\  \mathfrak c_1>0.\end{equation}
Indeed, by $|x-p|=|p|-\langle x,\frac{p}{|p|}\rangle+\mathcal O(\frac{|x|^2}{|p|})$ we get
\begin{equation} \label{1110}
\begin{aligned}&e^{-|x-p|} =e^{-|p|}  e^{\langle x,\frac{p}{|p|}\rangle}\left[1+\mathcal O\left(\frac{|x|^2}{|p|}\right)\right] \\
& \frac{x-p}{|x-p|}=-\frac{p}{|p|}+\frac{1}{|p|}\left[x-\left\langle x, \frac{p}{|p|}\right\rangle \frac{p}{|p|}\right]+\mathcal O\left(\frac{|x|^2}{|p|^2}\right)\end{aligned}
\end{equation}
for $|x|\leq \bar \ell^{\frac{1}{4}}$ and so
$$\begin{aligned} &\bar \ell^{\frac{N-1}{2}a_2} e^{a_2 \bar \ell} \Psi^{(1)}(\bar \ell)\\
&=\bar \ell^{\frac{N-1}{2}a_2} e^{a_2 \bar \ell}  \int_{B(0,\bar \ell^\frac{1}{4})} \left[U(x-\bar \ell \mathtt t^*)+U(x-\bar \ell \mathtt t)\right]^{a_2} U^{a_1}\partial_1 U +\mathcal O( \bar \ell^{\frac{N-1}{2}a_2} e^{-\delta \bar \ell^{\frac14}})  \\
&=\underbrace{c_N^{a_2} \int_{\mathbb{R}^N} \left[e^{\langle x,  \mathtt t^* \rangle} +e^{\langle x, \mathtt t \rangle} \right]^{a_2} U^{a_1}\partial_1 U}_{=:\mathfrak c_1>0} +o(1)
\end{aligned}$$
as $\bar \ell \to +\infty$ in view of \eqref{1148}, where $c_N=\displaystyle \lim_{|x|\to +\infty} |x|^\frac{N-1}{2} e^{|x|} U(x)$.
It is important to point out that $\mathfrak c_1>0$ as it easily follows through an integration by parts:
 $$\int_{\mathbb{R}^N} \left[e^{\langle x,  \mathtt t^* \rangle} +e^{\langle x, \mathtt t \rangle} \right]^{a_2} U^{a_1}\partial_1 U= \frac{a_2}{a_1+1} \sin \frac{\pi}{k} \int_{\mathbb{R}^N}  \left[e^{\langle x,  \mathtt t^* \rangle} +e^{\langle x, \mathtt t \rangle} \right]^{a_2}U^{a_1+1}>0.$$ 

Next, we  expand the coefficient of the  linear term $ \mathtt D {\boldsymbol\Psi}^{(1)}  (z^*_{0})$. We have
$$\begin{aligned} 
&   \mathtt D _0(\bar\ell)\\  &=
\gamma_1 \int_{B(0,\bar \ell^\frac{1}{4})}  [U(x-\bar \ell \mathtt t^*)+U(x-\bar \ell \mathtt t)]^{a_2-1} [U'(x-\bar \ell \mathtt t^*) \langle x,\mathtt n^* \rangle+U'(x-\bar \ell \mathtt t)\langle x,\mathtt n \rangle] U^{a_1}\partial_1 U\\
&-\left(\beta_1+\alpha_{m+1} \sin \frac{\pi}{k}\right) \int_{B(0,\bar \ell^\frac{1}{4})}  [U(x-\bar \ell \mathtt t^*)+U(x-\bar \ell \mathtt t)]^{a_2-1}[U'(x-\bar \ell \mathtt t^*)+U'(x-\bar \ell \mathtt t)] U^{a_1}\partial_1 U\\
&+o(\bar \ell^{-\frac{N-1}{2}a_2} e^{-a_2 \bar \ell })D_\Lambda
\end{aligned}$$
because of \eqref{1148} and 
$$\langle \frac{x-\bar \ell \mathtt t^*}{|x-\bar \ell \mathtt t^*|},\mathfrak {d} p_1\rangle-\gamma_1 \langle x, \mathtt n^* \rangle,\  \langle \frac{x-\bar \ell \mathtt t}{|x-\bar \ell \mathtt t|}, \mathfrak {d} p_2 \rangle-\gamma_1 \langle x,\mathtt n \rangle=-\left(\beta_1+\alpha_{m+1} \sin \frac{\pi}{k}\right) +o(1)D_\Lambda$$
for $|x|\leq \bar \ell^{\frac{1}{4}}$ thanks to \eqref{1110}. Hence there holds
$$\begin{aligned} 
& \bar \ell^{\frac{N-1}{2}a_2} e^{a_2 \bar \ell } \mathtt D  {\boldsymbol\Psi}^{(1)}(z^*_{0})\\ &= \left(\beta_1+\alpha_{m+1} \sin \frac{\pi}{k}\right) c_N^{a_2}  \int_{\mathbb{R}^N}  \left[e^{\langle x,  \mathtt t^* \rangle} +e^{\langle x, \mathtt t \rangle} \right]^{a_2}U^{a_1}  \partial_1  U \mathtt e_1\\
&-\gamma_1 \underbrace{c_N^{a_2}  \int_{\mathbb{R}^N} \left[e^{\langle x,  \mathtt t^* \rangle} +e^{\langle x, \mathtt t \rangle} \right]^{a_2-1}[e^{\langle x,  \mathtt t^* \rangle} \langle x, \mathtt n^* \rangle+e^{\langle x, \mathtt t \rangle} \langle x,\mathtt n \rangle] U^{a_1} \partial_1 U}_{=\frac{\mathfrak c_1 \mathfrak c_2}{a_2}}\mathtt e_1\\ &+o(1)D_\Lambda \\
&=\left[\mathfrak c_1 \left(\beta_1+ \alpha_{m+1} \sin \frac{\pi}{k}\right)-  \frac{\mathfrak c_1 \mathfrak c_2 }{a_2} \gamma_1 \right]\mathtt e_1+o(1)D_\Lambda\end{aligned}
$$
in view of \eqref{1110}.
 
Next, we consider the case \fbox{$h\ge1$}.
 Decomposing $\nabla =\mathtt t \partial_{\mathtt t}+\mathtt n \partial_{\mathtt n} +\displaystyle \sum_{j=3}^N e_j \partial_{e_j}$,  we have that
$$\begin{aligned}
  {\boldsymbol\Psi}^{(1)}(z^*_{2h})&= \int_{\mathbb{R}^N} \left[U(x+\bar \ell \mathtt t)+U(x-\bar \ell \mathtt t)\right]^{a_2} U^{a_1} \nabla  U\\ &
=\mathtt t \int_{\mathbb{R}^N} \left[U(x+\bar \ell \mathtt t)+U(x-\bar \ell \mathtt t)\right]^{a_2} U^{a_1} \partial_{\mathtt t}U  \\ &\quad +\mathtt n \int_{\mathbb{R}^N} \left[U(x+\bar \ell \mathtt t)+U(x-\bar \ell \mathtt t)\right]^{a_2}U^{a_1} \partial_{\mathtt n} U
\\ &\quad+\sum_{j=3}^N e_j \int_{\mathbb{R}^N} \left[U(x+\bar \ell \mathtt t)+U(x-\bar \ell \mathtt t)\right]^{a_2}U^{a_1} \partial_{e_j} U
=0
\end{aligned}$$
 by oddness of $i-$th term above under the reflection $x \to T_i(x)$, $i=1,\dots,N$, where
$$T_i(x)=\left\{\begin{array}{ll} - \langle x,  \mathtt t\rangle \mathtt t+\langle x,  \mathtt n\rangle \mathtt n+\displaystyle \sum_{j=3}^N \langle x, e_j \rangle e_j & \hbox{if }i=1\\
\langle x,  \mathtt t\rangle \mathtt t-\langle x,  \mathtt n\rangle \mathtt n+\displaystyle \sum_{j=3}^N \langle x, e_j \rangle e_j&\hbox{if }i=2\\
\langle x,  \mathtt t\rangle \mathtt t+\langle x,  \mathtt n\rangle \mathtt n+\displaystyle \sum_{j \not= i} \langle x, e_j \rangle e_j-\langle x, e_i \rangle e_i &\hbox{if }i\geq 3. \end{array} \right.$$
Next, we expand the derivative  $ \mathtt D {\boldsymbol\Psi}^{(1)}(z^*_{2h})$. By exploiting all the symmetries we can write
$$\begin{aligned}
&\mathtt D {\boldsymbol\Psi}^{(1)}(z^*_{2h})\\
&= \mathtt t \int_{\mathbb{R}^N}  [U(x+\bar \ell \mathtt t)+U(x-\bar \ell \mathtt t)]^{a_2-1}[\langle \nabla U(x+\bar \ell  \mathtt t), \mathfrak {d} p_1 \rangle+\langle \nabla U(x-\bar \ell \mathtt t), \mathfrak {d} p_2 \rangle] U^{a_1} \partial_{\mathtt t} U\\
&+\mathtt n \int_{\mathbb{R}^N}  [U(x+\bar \ell \mathtt t)+U(x-\bar \ell \mathtt t)]^{a_2-1}[\langle \nabla U(x+\bar \ell  \mathtt t), \mathfrak {d} p_1 \rangle+\langle \nabla U(x-\bar \ell \mathtt t), \mathfrak {d} p_2 \rangle] U^{a_1} \partial_{\mathtt n} U\\
& =-(\beta_{2h}-\beta_{2h-1}) \mathtt t \int_{\mathbb{R}^N}  [U(x+\bar \ell \mathtt t)+U(x-\bar \ell \mathtt t)]^{a_2-1}
\partial_{\mathtt t} U(x+\bar \ell  \mathtt t)  U^{a_1} \partial_{\mathtt t} U\\
&+(\beta_{2h+1}-\beta_{2h}) \mathtt t \int_{\mathbb{R}^N}  [U(x+\bar \ell \mathtt t)+U(x-\bar \ell \mathtt t)]^{a_2-1} \partial_{\mathtt t} U(x-\bar \ell \mathtt t) U^{a_1} \partial_{\mathtt t} U\\
&+\bar \ell (\gamma_{2h-1}-\gamma_{2h}) \mathtt n \int_{\mathbb{R}^N}  [U(x+\bar \ell \mathtt t)+U(x-\bar \ell \mathtt t)]^{a_2-1} \partial_{\mathtt n} U(x+\bar \ell  \mathtt t)  U^{a_1} \partial_{\mathtt n} U\\
&+\bar \ell (\gamma_{2h+1}-\gamma_{2h})  \mathtt n \int_{\mathbb{R}^N}  [U(x+\bar \ell \mathtt t)+U(x-\bar \ell \mathtt t)]^{a_2-1}\partial_{\mathtt n} U(x-\bar \ell \mathtt t) U^{a_1} \partial_{\mathtt n} U\\
& =(\beta_{2h+1}+\beta_{2h-1}-2\beta_{2h}) \mathtt t \underbrace{\int_{\mathbb{R}^N}  [U(x+\bar \ell \mathtt t)+U(x-\bar \ell \mathtt t)]^{a_2-1} \partial_{\mathtt t} U(x-\bar \ell \mathtt t) U^{a_1} \partial_{\mathtt t} U}_{=\mathtt D_{\mathtt t}(\bar\ell)}\\
&+\bar \ell (\gamma_{2h+1}+\gamma_{2h-1}-2\gamma_{2h})  \mathtt n \underbrace{\int_{\mathbb{R}^N}  [U(x+\bar \ell \mathtt t)+U(x-\bar \ell \mathtt t)]^{a_2-1}\partial_{\mathtt n} U(x-\bar \ell \mathtt t)  U^{a_1} \partial_{\mathtt n} U}_{=\mathtt D_{\mathtt n}(\bar\ell)}
\end{aligned}$$
in view of $$\mathfrak {d} p_1=-(\beta_{2h}-\beta_{2h-1})\mathtt t +\bar \ell (\gamma_{2h-1}-\gamma_{2h}) \mathtt n,\ \mathfrak {d} p_2=(\beta_{2h+1} -\beta_{2h})\mathtt t +\bar \ell (\gamma_{2h+1}-\gamma_{2h}) \mathtt n.$$
By symmetry and through some integration by parts the coefficients $\mathtt D_{\mathtt t}(\bar\ell)$ and $\mathtt D_{\mathtt n}(\bar\ell)$ have the following expansions:
$$\begin{aligned}
&\bar \ell^{\frac{N-1}{2}a_2} e^{a_2 \bar \ell }  \mathtt D_{\mathtt t}(\bar \ell)\\
&=c_N^{a_2}\int_{\mathbb{R}^N}  (e^{-\langle x,\mathtt t \rangle}+e^{\langle x,\mathtt t\rangle})^{a_2-1}e^{\langle x,\mathtt t\rangle}  U^{a_1} \partial_{\mathtt t} U+o(1)\\
&= \frac{c_N^{a_2}}{2} \int_{\mathbb{R}^N}  (e^{-\langle x,\mathtt t \rangle}+e^{\langle x,\mathtt t\rangle})^{a_2-1}
(e^{\langle x,\mathtt t\rangle}-e^{-\langle x,\mathtt t\rangle})U^{a_1}  \partial_{\mathtt t}  U+o(1)\\
&=- \frac{c_N^{a_2}}{2(a_1+1)} \int_{\mathbb{R}^N}  [ (a_2-1)(e^{\langle x,\mathtt t\rangle}-e^{-\langle x,\mathtt t\rangle})^2+(e^{-\langle x,\mathtt t \rangle}+e^{\langle x,\mathtt t\rangle})^2](e^{-\langle x,\mathtt t \rangle}+e^{\langle x,\mathtt t\rangle})^{a_2-2}U^{a_1+1}\\
&\quad+o(1)
\end{aligned}$$
and
$$\begin{aligned}
\bar \ell^{\frac{N-1}{2}a_2+1} e^{a_2 \bar \ell }  \mathtt D_{\mathtt n}(\bar \ell) &=-c_N^{a_2}\int_{\mathbb{R}^N}  (e^{-\langle x,\mathtt t \rangle}+e^{\langle x,\mathtt t\rangle})^{a_2-1}e^{\langle x,\mathtt t\rangle}\langle x, \mathtt n \rangle U^{a_1} \partial_{\mathtt n}  U+
o(1)\\
&=\frac{c_N^{a_2}}{a_1+1} \int_{\mathbb{R}^N}  (e^{-\langle x,\mathtt t \rangle}+e^{\langle x,\mathtt t\rangle})^{a_2-1}e^{\langle x,\mathtt t\rangle}  U^{a_1+1}+
o(1)\\ &=\frac{c_N^{a_2}}{2(a_1+1)} \int_{\mathbb{R}^N} (e^{-\langle x,\mathtt t \rangle}+e^{\langle x,\mathtt t\rangle})^{a_2}U^{a_1+1}+o(1)
\end{aligned}$$
in view of \eqref{1110} and
\begin{equation} \label{0952}
\left\langle \frac{x-\bar \ell \mathtt t}{|x-\bar \ell \mathtt t|},\mathtt t \right\rangle= -1+\mathcal O\left(\frac{|x|^2}{\bar \ell^2}\right),\quad 
\left\langle \frac{x-\bar \ell \mathtt t}{|x-\bar \ell \mathtt t|},\mathtt n \right\rangle=  \frac{\langle x, \mathtt n \rangle }{\bar \ell} +\mathcal O\left(\frac{|x|^2}{\bar \ell^2}\right).
\end{equation}
Hence, when $h \geq 1$ there holds
$$\bar \ell^{\frac{N-1}{2}a_2} e^{a_2 \bar \ell } \mathtt D {\boldsymbol\Psi}^{(1)}(z^*_{2h})=-\frac{\mathfrak c_1 \mathfrak c_3}{a_2}(\beta_{2h+1}+\beta_{2h-1}-2\beta_{2h})\mathtt t-\frac{\mathfrak c_1 \mathfrak c_4}{a_2} (\gamma_{2h+1}+\gamma_{2h-1}-2 \gamma_{2h}) \mathtt n+o(1)D_\Lambda$$
with 
$$\mathfrak c_3:=\frac{a_2 c_N^{a_2}}{2 \mathfrak c_1(a_1+1)} \int_{\mathbb{R}^N}  [ (a_2-1)(e^{\langle x,\mathtt t\rangle}-e^{-\langle x,\mathtt t\rangle})^2+(e^{-\langle x,\mathtt t \rangle}+e^{\langle x,\mathtt t\rangle})^2](e^{-\langle x,\mathtt t \rangle}+e^{\langle x,\mathtt t\rangle})^{a_2-2}U^{a_1+1}>0$$
and
$$\mathfrak c_4:=- \frac{a_2 c_N^{a_2}}{2 \mathfrak c_1(a_1+1)} \int_{\mathbb{R}^N} (e^{-\langle x,\mathtt t \rangle}+e^{\langle x,\mathtt t\rangle})^{a_2}U^{a_1+1}<0.$$
Collecting all the previous results into \eqref{taylor} we  deduce the estimates of the first component.\\

{\em The second component.}\\
If $s\in \Pi_2$ then $s=z_{2h-1}$ and $\Pi_1^s=\{z_{2h-2},z_{2h}\}$ for any $1\leq h \leq n.$ Therefore
$${\boldsymbol\Psi}^{(2)}(z_{2h-1})= \int_{\mathbb{R}^N} \Big[U(x-p_1)+U(x-p_2) \Big]^{b_1} U^{b_2}\nabla U.$$
where $p_1=z_{2h-2}-z_{2h-1}$ and $p_2=z_{2h}-z_{2h-1}$. Arguing exactly as in the previous case, we derive a Taylor expansion of ${\boldsymbol\Psi}^{(2)}$ around $z^*_{2h-1}$:
$$ {\boldsymbol\Psi}^{(2)}(z_{2h-1}) ={\boldsymbol\Psi}^{(2)} (z^*_{2h-1})-b_1  \mathtt D {\boldsymbol\Psi}^{(2)} (z^*_{2h-1})+\mathcal O(e^{-b_1 \bar \ell-\delta \bar \ell^\frac{1}{4}} +\bar \ell^\theta e^{-b_1  \bar \ell} Q_\Lambda ^\frac{\theta}{2}).$$
As in the previous paragraph (when $h\ge1$), by using the symmetries we have that 
$${\boldsymbol\Psi}^{(2)}(z^*_{2h-1})=0.$$
 It only remains to expand the derivative $ \mathtt D {\boldsymbol\Psi}^{(2)} (z^*_{2h-1})$ and we distinguish two cases.\\

If  \fbox{$h=2,\dots,n$}, since
$$p_1=
-(\bar \ell +\beta_{2h-1}-\beta_{2h-2})\mathtt t +\bar \ell (\gamma_{2h-2}-\gamma_{2h-1}) \mathtt n, \ p_2=
(\bar \ell +\beta_{2h} -\beta_{2h-1})\mathtt t +\bar \ell (\gamma_{2h}-\gamma_{2h-1}) \mathtt n, 
$$
we get
$$\bar \ell^{\frac{N-1}{2}b_1} e^{b_1 \bar \ell } \mathtt D{\boldsymbol\Psi}^{(2)}(z^*_{2h-1})=
-\frac{ \mathfrak c_5}{b_1} (\beta_{2h}+\beta_{2h-2}-2\beta_{2h-1}) \mathtt t-\frac{ \mathfrak c_6}{b_1} (\gamma_{2h}+\gamma_{2h-2}-2 \gamma_{2h-1})  \mathtt n+o(1)D_\Lambda$$
where  
$$\mathfrak c_5:=\frac{b_1 c_N^{b_1}}{2 (b_2+1)} \int_{\mathbb{R}^N}  [ (b_1-1)(e^{\langle x,\mathtt t\rangle}-e^{-\langle x,\mathtt t\rangle})^2+(e^{-\langle x,\mathtt t \rangle}+e^{\langle x,\mathtt t\rangle})^2](e^{-\langle x,\mathtt t \rangle}+e^{\langle x,\mathtt t\rangle})^{b_1-2}U^{b_2+1}>0$$
and 
$$\mathfrak c_6:=- \frac{b_1 c_N^{b_1}}{2 (b_2+1)} \int_{\mathbb{R}^N} (e^{-\langle x,\mathtt t \rangle}+e^{\langle x,\mathtt t\rangle})^{b_1}U^{b_2+1}<0.$$

If \fbox{$h=1$} then
$$p_1=
 -(\bar \ell +\beta_1+\alpha_{m+1}\sin\frac\pi k)\mathtt t -(\bar \ell \gamma_{1}-\alpha_{m+1}\cos\frac\pi k)\mathtt n   
, \quad p_2=
(\bar \ell +\beta_{2 } -\beta_{1})\mathtt t +\bar \ell (\gamma_{2 }-\gamma_{1}) \mathtt n
$$
and
$$\mathfrak d p_1= -(\beta_1+\alpha_{m+1}\sin\frac\pi k)\mathtt t -(\bar \ell \gamma_{1}-\alpha_{m+1}\cos\frac\pi k)\mathtt n, \quad
\mathfrak d p_2= ( \beta_{2 } -\beta_{1})\mathtt t +\bar \ell (\gamma_{2 }-\gamma_{1}) \mathtt n$$
in view of  $\mathtt e_1=-\sin\frac\pi k \mathtt t +\cos\frac\pi k \mathtt n$. Therefore arguing as before
$$\begin{aligned}
&\mathtt D  {\boldsymbol\Psi}^{(2)}(z^*_1)\\ &= \mathtt t \int_{\mathbb{R}^N}  [U(x+\bar \ell \mathtt t)+U(x-\bar \ell \mathtt t)]^{b_1-1}[\langle \nabla U(x+\bar \ell  \mathtt t), \mathfrak {d} p_1 \rangle+\langle \nabla U(x-\bar \ell \mathtt t), \mathfrak {d} p_2 \rangle] U^{b_2} \partial_{\mathtt t} U\\
&+\mathtt n \int_{\mathbb{R}^N}  [U(x+\bar \ell \mathtt t)+U(x-\bar \ell \mathtt t)]^{b_1-1}[\langle \nabla U(x+\bar \ell  \mathtt t), \mathfrak {d} p_1 \rangle+\langle \nabla U(x-\bar \ell \mathtt t), \mathfrak {d} p_2 \rangle] U^{b_2} \partial_{\mathtt n}U\\
& =-( \beta_1+\alpha_{m+1}\sin\frac\pi k)\mathtt t \int_{\mathbb{R}^N}  [U(x+\bar \ell \mathtt t)+U(x-\bar \ell \mathtt t)]^{b_1-1}
\partial_{\mathtt t} U(x+\bar \ell  \mathtt t) U^{b_2} \partial_{\mathtt t}  U\\
&+ ( \beta_{2 } -\beta_{1})\mathtt t \int_{\mathbb{R}^N}  [U(x+\bar \ell \mathtt t)+U(x-\bar \ell \mathtt t)]^{b_1-1} \partial_{\mathtt t} U(x-\bar \ell \mathtt t) U^{b_2} \partial_{\mathtt t} U\\
&-(\bar \ell \gamma_{1}-\alpha_{m+1}\cos\frac\pi k) \mathtt n \int_{\mathbb{R}^N}  [U(x+\bar \ell \mathtt t)+U(x-\bar \ell \mathtt t)]^{b_1-1} \partial_{\mathtt n} U(x+\bar \ell  \mathtt t)  U^{b_2} \partial_{\mathtt n} U\\
&+\bar \ell (\gamma_{2 }-\gamma_{1})   \mathtt n \int_{\mathbb{R}^N}  [U(x+\bar \ell \mathtt t)+U(x-\bar \ell \mathtt t)]^{b_1-1}\partial_{\mathtt n} U(x-\bar \ell \mathtt t)  U^{b_2} \partial_{\mathtt n} U\\
& =(\beta_{2}-2\beta_1-\alpha_{m+1}\sin\frac\pi k) \mathtt t \int_{\mathbb{R}^N}  [U(x+\bar \ell \mathtt t)+U(x-\bar \ell \mathtt t)]^{b_1-1} \partial_{\mathtt t} U(x-\bar \ell \mathtt t)U^{b_2} \partial_{\mathtt t}  U\\
&+\bar \ell (\gamma_2-2\gamma_1 +\bar\ell^{-1}\alpha_{m+1}\cos\frac\pi k)  \mathtt n \int_{\mathbb{R}^N}  [U(x+\bar \ell \mathtt t)+U(x-\bar \ell \mathtt t)]^{b_1-1}\partial_{\mathtt n} U(x-\bar \ell \mathtt t)  U^{b_2} \partial_{\mathtt n} U
\end{aligned}$$
and then
$$\bar \ell^{\frac{N-1}{2}b_1} e^{b_1 \bar \ell } \mathtt D{\boldsymbol\Psi}^{(2)}(z^*_1)=
-\frac{ \mathfrak c_5}{b_1} (\beta_2-2\beta_1 -\alpha_{m+1} \sin \frac{\pi}{k}) \mathtt t-\frac{ \mathfrak c_6}{b_1} (\gamma_2-2 \gamma_1)  \mathtt n+o(1)D_\Lambda$$
in view of  \eqref{0952}. Collecting all the previous results we  deduce the estimates of the second component.\\
\end{proof}

\subsection{The inner error}

\begin{lm}\label{new-2} 
${}$\\
As $\Lambda \to  +\infty$ the first component ${\mathscr I}_1$ satisfies
$$\begin{aligned}
\frac{\int\limits_{\mathbb R^N} {\mathscr I}_1(x) \nabla U(x-y_1) dx}{\Psi^{(0)}(\ell)}&=\left[\alpha_2-\alpha_1-2\sin\frac\pi k \alpha_1\right]\mathtt e_1 +o(1)D_\Lambda+O(e^{- \delta \ell^\frac{1}{4}}+\ell^K Q_\Lambda^\frac{1}{2})\\ 
\frac{\int\limits_{\mathbb R^N} {\mathscr I}_1(x) \nabla U(x-y_i)  dx}{\Psi^{(0)}(\ell)}&=\left[\alpha_{i+1}-2\alpha_i+\alpha_{i-1}\right]\mathtt e_1 +o(1)D_\Lambda+O(e^{- \delta \ell^\frac{1}{4}}+\ell^K Q_\Lambda^\frac{1}{2})\\
\frac{\int\limits_{\mathbb R^N} {\mathscr I}_1(x) \nabla U(x-y_{m+1})  dx}{\Psi^{(0)}(\ell)}&=
-(1+\alpha_m-\alpha_{m+1}) \mathtt e_1+o(1)D_\Lambda+O(e^{- \delta \ell^\frac{1}{4}}+\ell^K Q_\Lambda^\frac{1}{2})\\ 
\frac{ \int_{\mathbb{R}^N}{\mathscr I}_1(x) \nabla U(x -z_{2h}) dx}{\Lambda \Psi^{(1)}(\bar \ell)}&= O(e^{- \delta \bar \ell})
 \end{aligned}$$
if $i=2, \dots, m$ and $h=1, \dots, n-1$, where $K>0$ is large.  As $\Lambda \to  +\infty$ the second component ${\mathscr I}_2$ satisfies
$$ \frac{\int\limits_{\mathbb R^N}{\mathscr I}_2(x) \nabla U(x -z_{2h-1})   dx}{\Lambda \Psi^{(2)}(\bar \ell)} =
O(e^{- \delta \bar \ell})$$
if $h=1,\dots, n$. Here (see \eqref{psizero})
$$\Psi^{(0)} (t)=t^{-\frac{N-1}{2} } e^{-  t }(\mathfrak c_0+o(1))\ \hbox{as}\ t\to\infty,\ c_0>0.$$
\end{lm}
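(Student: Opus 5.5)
The plan is to follow the scheme of \cite{mu-pa-wei-2012,musso-pacard-wei}, which rests on linear superposition within one component. The error $\mathscr I_i=\sum_{p\in\Pi_i}U^{\mathfrak p}(x-p)-U_i^{\mathfrak p}$ is localized near a peak $s\in\Pi_i$. On the ball $B(s,\rho_i/2)$ we write $U_i=U(x-s)+\varphi_s$ with $\varphi_s=\sum_{q\neq s}U(x-q)$ small. A Taylor expansion gives
$$U_i^{\mathfrak p}-U^{\mathfrak p}(x-s)-\sum_{q\ne s}U^{\mathfrak p}(x-q)=\mathfrak p U^{\mathfrak p-1}(x-s)\varphi_s+\mathcal O\big(U^{\mathfrak p-2}(x-s)\varphi_s^2+\varphi_s^{\mathfrak p}\big),$$
where $\mathfrak p\ge 2$ controls the quadratic remainder. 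Outside the balls we use \eqref{key} and the decay of $\partial U(x-s)$. Hence
$$\int_{\mathbb{R}^N}\mathscr I_i\nabla U(x-s)=-\sum_{q\in\Pi_i,\,q\ne s}\int_{\mathbb{R}^N}\mathfrak pU^{\mathfrak p-1}(x)U(x+s-q)\nabla U(x)\,dx+\text{h.o.t.}=-\sum_{q\ne s}\Psi^{(0)}(|s-q|)\frac{s-q}{|s-q|}+\text{h.o.t.},$$
using \eqref{1531}. The h.o.t. are $\mathcal O(e^{-(1+\delta)\rho_i})$, because the quadratic terms cost $e^{-2\rho_i}$ times a polynomial factor, and the cross terms involving two different distant peaks are smaller by \eqref{key}. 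Only the closest peaks $q\in\Pi_i^s$ contribute at leading order. The others are at distance at least $(1+\delta)\rho_i$, by \eqref{1838} and \eqref{1119}.

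\textbf{First component, on the rays.} For $s=y_i$ with $2\le i\le m$, the neighbours are $y_{i\pm1}$ at distances $\ell+\alpha_{i+1}-\alpha_i$ and $\ell+\alpha_i-\alpha_{i-1}$. Expanding $\Psi^{(0)}(\ell+t)=\Psi^{(0)}(\ell)(1-t+o(1)t+\mathcal O(t^2))$ by \eqref{psizero} gives the forces $\Psi^{(0)}(\ell)[(1-\alpha_i+\alpha_{i-1})-(1-\alpha_{i+1}+\alpha_i)]\mathtt e_1$. This yields $\alpha_{i+1}-2\alpha_i+\alpha_{i-1}$ after checking the signs against the formula above. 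For $s=y_1$ the neighbours are $y_2$ and $R_{\pm k}y_1$. Here $|R_ky_1-y_1|=\bar\ell'+2\alpha_1\sin\frac\pi k$ and the direction is $\pm$ the tangent, so the $\mathtt e_1$ components add up to $2\sin\frac\pi k\,\Psi^{(0)}(\bar\ell'+2\alpha_1\sin\frac\pi k)$. Transverse components cancel by the $x_2$-reflection symmetry. The zeroth-order terms cancel thanks to the balancing condition \eqref{ba1}, and the linear terms give $\alpha_2-\alpha_1-2\sin\frac\pi k\,\alpha_1$, where the factor $\sin\frac\pi k$ multiplying $\Psi^{(0)}(\bar\ell')$ equals $\frac12\Psi^{(0)}(\ell)$ at leading order. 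For $s=y_{m+1}=z_0$ the only neighbour in $\Pi_1$ at distance $\approx\ell$ is $y_m$. The point $z_2$ is farther by \eqref{1838}, with an $e^{-\delta\ell}$ gain. So we get $-\Psi^{(0)}(\ell+\alpha_{m+1}-\alpha_m)\mathtt e_1=-(1+\alpha_m-\alpha_{m+1})\Psi^{(0)}(\ell)\mathtt e_1+\dots$. This unbalanced term is the one that will be compensated by \eqref{ba2} combined with Lemma \ref{new-1}.

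\textbf{First component at $z_{2h}$, and second component.} For $s=z_{2h}$ with $h\ge1$, the closest points of $\Pi_1$ are $z_{2h\pm2}$ at distance $2\bar\ell$. The inner interaction is then $\mathcal O(\Psi^{(0)}(2\bar\ell))$. We compare it with $\Lambda\Psi^{(1)}(\bar\ell)\sim e^{-\ell}$ via \eqref{ba2} and \eqref{expansion}; since $2\mu>1$ by \eqref{d5}, the ratio is $\mathcal O(e^{-(2\mu-1+o(1))\ell})=\mathcal O(e^{-\delta\bar\ell})$. The same argument applies to $z_{2h-1}\in\Pi_2$. There $\rho_2=2\mu\cos\frac\pi k\,\ell$, and $\Lambda\Psi^{(2)}(\bar\ell)\approx e^{-((b-a)\mu+1)\ell}$. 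The ratio is small exactly because $2\mu\cos\frac\pi k>(b-a)\mu+1$, which is the third lower bound in \eqref{d5}.

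The main obstacle is bookkeeping the remainder uniformly in $(\boldsymbol\alpha,\boldsymbol\beta,\boldsymbol\gamma)$ in the form $o(1)D_\Lambda+\mathcal O(e^{-\delta\ell^{1/4}}+\ell^KQ_\Lambda^{1/2})$. To handle it, we would split each integral into $|x-s|\le\ell^{1/4}$ and its complement, exactly as in the proof of Lemma \ref{new-1}. Inside the ball we use the expansions \eqref{1110}. Outside we use the decay bound in the spirit of \eqref{1148} to get the $e^{-\delta\ell^{1/4}}$ term. Finally, we Taylor expand in the displacement as in \eqref{taylor}, with the exponent $\frac12$ coming from the quadratic remainder of the linear superposition.
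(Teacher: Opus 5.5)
Your proposal is correct and is essentially the paper's proof. The common route is: reduce $\int_{\mathbb{R}^N}\mathscr I_i\nabla U(x-s)\,dx$ to the nearest-neighbour pairwise forces up to $\mathcal O(e^{-(1+\delta)\rho_i})$, linearize in the displacements using \eqref{ba1} at $y_1$, and bound the $z$-peaks crudely against $\Lambda\Psi^{(i)}(\bar\ell)$ via $\mu>\frac12$ and $\mu>\frac1{a-b+2\cos\frac\pi k}$. The paper does the first step globally via \eqref{711} and the three-point bound \eqref{1953} rather than by splitting off $B(s,\rho_i/2)$, but for $\mathfrak p\ge 2$ your splitting works equally well.

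Two small corrections:
\begin{itemize}
\item The first-order expansion of $\Psi^{(0)}(\ell+t)$ needs the asymptotics of $(\Psi^{(0)})'$ and a bound on $(\Psi^{(0)})''$. These do not follow from \eqref{psizero} alone; you get them by differentiating the integral representation, as in \eqref{1509}.
\item Your identity is off by a factor $\mathfrak p$ relative to \eqref{1531}. Also, tracking signs with your own formula gives the $y_1,\dots,y_m$ rows with the opposite overall sign to the statement. Both slips are already present in the paper's statement and proof, and both are harmless: rescale $\Psi^{(0)}$, and note that those rows are homogeneous equations in Section~\ref{secmain}.
\end{itemize}
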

\begin{proof}

First of all,  in the Appendix we show that
\begin{equation}\label{pint1}
\int_{\mathbb{R}^N}{\mathscr I}_1(x) \nabla U(x-s) \, dx=- {\boldsymbol\Psi}^{(0)}(s) +\mathcal O(e^{-(1+\delta) \ell })
\end{equation}  
when $s \in \Pi_1$ and
\begin{equation}\label{pint2}
\int_{\mathbb{R}^N} {\mathscr I}_2(x) \nabla U(x-s) \, dx=-{\boldsymbol\Psi}^{(0)}(s)+\mathcal O( e^{-(1+\delta)2 \mu \ell\cos\frac\pi k })
\end{equation}  
when $s\in \Pi_2$, where ${\boldsymbol\Psi}^{(0)}$ is given by \eqref{in-1}. The  estimate of the leading term ${\boldsymbol\Psi}^{(0)}(s)$ is contained in \cite{musso-pacard-wei}. However, to make the paper self-contained, we can use the arguments already developed in  Lemma \ref{new-1}; in particular, we will repeatedly apply \eqref{taylor} with $a_2=1$, $a_1+1={\mathfrak p}$ and $\bar \ell$ replaced by $\ell$.\\

{{\em The  first   component.}}\\
 
If  \fbox{$s=y_1$} then $\Pi_1^s=\{y_2, R_k y_1,R_{-k}y_1\}.$ Since $p_1=y_2-y_1=\ell \mathtt e_1+(\alpha_2-\alpha_1)\mathtt e_1$ and
\begin{equation*}\begin{split}p_2=R_k y_1-y_1=(\bar\ell' +2\sin\frac\pi k \alpha_1) \mathtt t, \quad p_3=(\bar\ell'  +2\sin\frac\pi k \alpha_1) \mathtt t^*
\end{split}\end{equation*}
in view of \eqref{y1star}, $R_k \mathtt e_1-\mathtt e_1=2 \sin \frac{\pi}{k}\mathtt t$ and $R_{-k} \mathtt e_1-\mathtt e_1=2 \sin \frac{\pi}{k}\mathtt t^*$, by \eqref{1531} and \eqref{taylor} there holds
\begin{eqnarray*}
{\boldsymbol\Psi}^{(0)}(y_1)&=& - \Big[\Psi^{(0)}(\ell)\mathtt e_1+\Psi^{(0)}(\bar\ell')(\mathtt t +\mathtt t^*)\Big]
-\sum_{j=1}^3 \int_{\mathbb{R}^N}  \langle \nabla U(x-p_j^*), \mathfrak {d} p_j \rangle U^{{\mathfrak p}-1}\nabla U\\
&&+O( e^{-\ell -\delta \ell^\frac{1}{4}}+\ell e^{-\ell}Q_\Lambda^\frac{1}{2}).
\end{eqnarray*}
Arguing as in the expansions of $ \Psi^{(1)} (\bar \ell)$ and $\mathtt D _0(\bar\ell)$, thanks to \eqref{1531} we have that $\Psi^{(0)} (t)=- \int_{\mathbb{R}^N} U(x-t \mathtt e_1) U^{{\mathfrak p}-1} \partial_1 U$ can be expanded as in \eqref{psizero} with
\begin{equation}\label{1328}
{\mathfrak c}_0=\frac{c_N}{{\mathfrak p}}  \int_{\mathbb{R}^N}  e^{x_1 } U^{{\mathfrak p}}>0
\end{equation}
and, through an integration by parts and a rotation,
\begin{eqnarray}
&&t^{\frac{N-1}{2}}e^{t} \int_{\mathbb{R}^N}  \langle \nabla U(x-t e), \mathfrak {d} p \rangle U^{{\mathfrak p}-1}\nabla U=
c_N \langle e, \mathfrak {d} p \rangle \int_{\mathbb{R}^N}  e^{\langle x, e \rangle }  U^{{\mathfrak p}-1}\nabla U
+o(1)D_t \nonumber \\
&&=- {\mathfrak c}_0 \langle e, \mathfrak {d} p \rangle e+o(1)D_t 
=-  t^{\frac{N-1}{2}}e^{t} \Psi^{(0)} (t) \langle e, \mathfrak {d} p \rangle e+o(1)D_t \label{1509}
\end{eqnarray}
for $|e|=1$.  Since
$$\Psi^{(0)}(\ell)\mathtt e_1+\Psi^{(0)}(\bar\ell')(\mathtt t+\mathtt t^*)=
\Psi^{(0)}(\ell)-2\Psi^{(0)}(\bar\ell')\sin\frac\pi k =0$$
thanks to the balance condition \eqref{ba1}, by \eqref{1509} we deduce that
$$\begin{aligned}
&{\boldsymbol\Psi}^{(0)}(y_1)\\ & 
=-  \Psi^{(0)} (\ell) \Big[  (\alpha_2-\alpha_1)\mathtt e_1+\frac{2 \Psi^{(0)}(\bar\ell') \sin \frac{\pi}{k}}{\Psi^{(0)}(\ell)}\alpha_1 (\mathtt t+\mathtt t^*)
+o(1)D_\Lambda +O( e^{ -\delta \ell^\frac{1}{4}}+\ell^K Q_\Lambda^\frac{1}{2})\Big]\\
&= -  \Psi^{(0)} (\ell) \Big[  (\alpha_2-\alpha_1-2\sin \frac{\pi}{k}\alpha_1)\mathtt e_1 +o(1)D_\Lambda +O( e^{ -\delta \ell^\frac{1}{4}}+\ell^K Q_\Lambda^\frac{1}{2})\Big].
\end{aligned}$$

If    \fbox{$s=y_i,\ i=2,\dots,m$}, then $ \Pi_1^s=\{ y_{i-1},y_{i+1}\}$ and
$$p_1=y_{i-1}-y_i=-(\ell +\alpha_{i}-\alpha_{i-1}) \mathtt e_1,\quad p_2=
y_{i+1}-y_i=(\ell +\alpha_{i+1}-\alpha_i)\mathtt e_1.$$
Since ${\boldsymbol\Psi}^{(0)}(y_1^*)= - \Psi^{(0)}(\ell)[\mathtt e_1-\mathtt e_1]=0$, by \eqref{1509} we deduce that
\begin{eqnarray*}
{\boldsymbol\Psi}^{(0)}(y_i)&=& 
-  \Psi^{(0)} (\ell)\Big[(\alpha_{i+1}-2\alpha_i+\alpha_{i-1})\mathtt e_1+o(1)D_\Lambda +O( e^{ -\delta \ell^\frac{1}{4}}+\ell^K Q_\Lambda^\frac{1}{2})\Big].
\end{eqnarray*}

If  \fbox{$s=y_{m+1}$} then $\Pi_1^s=\{y_m\}$ and $p_1=y_m-y_{m+1}=-(\ell +\alpha_{m+1}-\alpha_m)\mathtt e_1$. Since ${\boldsymbol\Psi}^{(0)}(y_{m+1}^*)=\Psi^{(0)}(\ell)\mathtt e_1$,  by \eqref{1509} we deduce that
\begin{eqnarray*}
{\boldsymbol\Psi}^{(0)}(y_{m+1})&=&  \Psi^{(0)} (\ell)\Big[(1 +\alpha_m-\alpha_{m+1})\mathtt e_1+o(1)D_\Lambda +O( e^{ -\delta \ell^\frac{1}{4}}+\ell^K Q_\Lambda^\frac{1}{2})\Big].
\end{eqnarray*}

Finally, if  \fbox{$s=z_{2h},\ h=1,\dots,n-1$} then $\Pi_1^s=\{ z_{2h-2},z_{2h+2}\}$ and
\begin{eqnarray*}
{\boldsymbol\Psi}^{(0)}(z_{2h})=\mathcal O(e^{-2\bar \ell})=\mathcal O (\Lambda \Psi^{(1)}(\bar \ell)e^{-\delta \bar \ell})
\end{eqnarray*}
in view of $\mu >\frac{1}{2}$.\\

{ {\em The  second   component.}}\\
Finally, if \fbox{$s=z_{2h-1}, h=1, \dots, n$} then $\Pi_2^s=\{z_3,R_{-k} z_{2n-1}\}$ when $h=1$ and $\Pi_2^s=\{z_{2h-3},z_{2h+1}\}$ when $h=2, \dots, n$. Therefore
\begin{eqnarray*}
{\boldsymbol\Psi}^{(0)}(z_{2h-1})=\mathcal O(e^{-2\mu \cos \frac{\pi}{k} \ell})=\mathcal O (\Lambda \Psi^{(2)}(\bar \ell)e^{-\delta \bar \ell})
\end{eqnarray*}
in view of \eqref{1217} and $\mu>\frac 1{a-b+2\cos\frac\pi k}$.\\
\end{proof}
 
\section{Proof of main Theorem completed} \label{secmain}
In this section we complete the proof of Theorem \ref{main} by solving \eqref{sys}. It is enough to show that, if $\Lambda$ is large enough, we can find  the $(2n+m)$ parameters $\boldsymbol\alpha:=(\alpha_1,\dots,\alpha_{m+1})\in\mathbb R^{m+1}$, $\boldsymbol\beta:=(\beta_1,\dots,\beta_{n-1})\in\mathbb R^{n-1}$ and $\boldsymbol\gamma:=(\gamma_1,\dots,\gamma_n)\in\mathbb R^{n}$ as in \eqref{cpabg}-\eqref{symme} such that the term $\mathscr M({\bf h})$ in \eqref{pro-pro} (see also \eqref{MMM}) is zero.  To estimate the   contribution coming from the linear term $\mathscr L(\phi)$ in \eqref{L} and the quadratic one $\mathscr Q(\phi)$ in\eqref{Q}, we have the following result.
\begin{lm} \label{linear-rate}
 Let $i=1,2$ and $p\in \Pi_i$. For any $j=1,\dots,n$ there hold  
$$\int_{\mathbb{R}^N} \mathscr L_i(\phi) \partial_j U(x-p) dx, \ \int_{\mathbb{R}^N} \mathscr Q_i(\phi) \partial_j U(x-p) dx=   
 \mathcal O\left(\Lambda \Psi^{(i)}(\bar\ell)e^{-\delta\bar\ell}\right)$$
for some $\delta>0$, provided ${\mathfrak p}>\frac{3}{2}$  and $\mu$ satisfies \eqref{d5} and \eqref{1642}-\eqref{1653}.
\end{lm}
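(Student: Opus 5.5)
The plan is to use the fact that $\partial_j U(\cdot-p)$ lies in the approximate kernel of the limiting operator. Then $\int \mathscr L_i(\phi)\partial_jU(x-p)$ reduces to interaction terms of size $\|\phi\|_*$ times an exponentially small factor, while $\int \mathscr Q_i(\phi)\partial_jU(x-p)$ is superlinear in $\|\phi\|_*$. Both are then compared with $\Lambda\Psi^{(i)}(\bar\ell)$, using the bound \eqref{1551} from Proposition \ref{nonlinear}, namely $\|\phi\|_*=\mathcal O(\epsilon)$ with $\epsilon=\Lambda e^{-(c-\eta)\mu\ell}+e^{-(\min\{{\mathfrak p}-1,1\}-\eta)\ell}$. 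Throughout I rely on \eqref{expansion}, which gives $\Lambda = e^{(a\mu-1)\ell+\mathcal O(\log\ell)}$ and $\Lambda\Psi^{(1)}(\bar\ell)\approx e^{-\ell}$ and $\Lambda\Psi^{(2)}(\bar\ell)\approx e^{-(1+(b-a)\mu)\ell}$ up to polynomial factors. Since $a\le b$, the target bound is at worst $e^{-(1+(b-a)\mu+\delta)\ell}$.

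\textbf{Linear term.} I integrate by parts and use $-\Delta\partial_jU+\partial_jU={\mathfrak p}U^{{\mathfrak p}-1}\partial_jU$. This gives, for $i=1$,
\begin{equation*}
\int \mathscr L_1(\phi)\partial_jU(x-p)=\int\big[{\mathfrak p}U^{{\mathfrak p}-1}(x-p)-f'(U_1)\big]\phi_1\partial_jU(x-p)+\Lambda\int\big[\partial_sA\,\phi_1+\partial_tA\,\phi_2\big]\partial_jU(x-p).
\end{equation*}
For the first integral, $|f'(U_1)-{\mathfrak p}U^{{\mathfrak p}-1}(x-p)|$ is controlled by $U^{{\mathfrak p}-2}(x-p)\sum_{q\ne p}U(x-q)$ when ${\mathfrak p}\ge2$ (with a $\min\{{\mathfrak p}-1,1\}$ power otherwise). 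Using \eqref{key} and \eqref{1217}, this integral is $\mathcal O(e^{-(\min\{{\mathfrak p}-1,1\}-\eta)\ell}\|\phi\|_*)=\mathcal O(\epsilon\, e^{-(\min\{{\mathfrak p}-1,1\}-\eta)\ell})$. The $\Lambda$-terms carry at least $U_2^{a_2-1}$ near $p\in\Pi_1$, or $U_1^{b_1-1}$ near $p\in\Pi_2$. Hence they are $\mathcal O(\Lambda e^{-(a-1-\eta)\mu\ell}\epsilon)$, and similarly $\mathcal O(\Lambda e^{-(c-\eta)\mu\ell}\epsilon)$ for the mixed term coming from the other component.

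\textbf{Quadratic term.} I reuse the pointwise bounds \eqref{Q1p}–\eqref{Q1pp}, multiplied by $|\nabla U(x-p)|$, in place of the $\|\cdot\|_*$ bounds. The gain comes from a second factor of $\epsilon$: $\int|\mathscr Q_i(\phi)||\nabla U(x-p)| = \mathcal O(\epsilon^{\min\{{\mathfrak p},2\}}+\Lambda\epsilon^{c}+\Lambda e^{-(c-2)\mu\ell}\epsilon^2)$.

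\textbf{Exponent comparison (main obstacle).} It remains to check that every bound obtained is $\le e^{-(1+(b-a)\mu+\delta)\ell}$, i.e.\ that each exponent strictly exceeds $1+(b-a)\mu$ in the $i=2$ case and exceeds $1$ in the $i=1$ case. This is a finite list of linear inequalities in $\mu$. The typical one is $\Lambda\epsilon^2\le e^{-(1+\delta)\ell}$. With the first piece of $\epsilon$ it requires $2(a\mu-1)-2c\mu<-1$; with the second it requires $a\mu-1-2\min\{{\mathfrak p}-1,1\}<-1$, which for ${\mathfrak p}\ge2$ means $a\mu<2$. Similarly, the linear contribution $e^{-(\min\{{\mathfrak p}-1,1\})\ell}\epsilon$ needs $\epsilon\ll e^{-(1-\min\{{\mathfrak p}-1,1\})\ell}$. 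This is where ${\mathfrak p}>\frac32$ enters: it guarantees $2\min\{{\mathfrak p}-1,1\}>1$.

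I expect these comparisons to hold using $\frac1a<\mu<\frac1{a+1-c}$ from \eqref{d5}. Any remaining inequalities, especially those for the $\Pi_2$ peaks, where the target carries the extra factor $e^{-(b-a)\mu\ell}$, would be collected as the constraints \eqref{1642}–\eqref{1653}, with $\eta$ and $\delta$ chosen small at the end. The delicate point is the $i=2$ case when $b>a$. There the $\Pi_1$-driven error $e^{-(\min\{{\mathfrak p}-1,1\})\ell}\epsilon$ must be beaten by the smaller scale $\Lambda\Psi^{(2)}$, so I would first check whether the symmetry $\phi$ satisfying \eqref{roto} cancels the leading part. Failing that, I would impose it explicitly as one of the conditions \eqref{1642}–\eqref{1653}.
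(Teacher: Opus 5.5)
You have the paper's skeleton: integrate by parts against $\partial_jU(\cdot-p)$, bound everything by powers of $\|\phi\|_*$ via \eqref{1551}, and compare exponents using \eqref{expansion}. The gap is that the decisive step, the exponent comparison, is only announced. With the quadratic bound you actually wrote down, that comparison is false in part of the range the lemma covers.

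\textbf{The quadratic bound is too lossy.} Your bound $\Lambda\epsilon^{c}+\Lambda e^{-(c-2)\mu\ell}\epsilon^2$ is just $\epsilon$ times the $\|\cdot\|_*$-estimate from Proposition \ref{nonlinear}, and it loses roughly a factor $e^{(a-c)\mu\ell}$. A concrete case where it fails:
take $a_1=b_2=\frac32$ and $a_2=b_1=2$, so $a=b=2$, $c=\frac32$ and \eqref{ab} holds. Take ${\mathfrak p}=2$, $k$ large and an admissible $\mu$ near $0.65$. Then \eqref{d5} holds, and \eqref{1642}--\eqref{1653} are automatic for ${\mathfrak p}\ge2$.
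Up to powers of $\ell$ and factors $e^{\mathcal O(\eta)\ell}$, you get $\Lambda\approx e^{0.3\ell}$ and $\epsilon\approx e^{-0.675\ell}$. Hence $\Lambda\epsilon^{c}\approx e^{-0.71\ell}$ and $\Lambda e^{-(c-2)\mu\ell}\epsilon^2\approx e^{-0.725\ell}$. Both are much larger than $\Lambda\Psi^{(1)}(\bar\ell)=\Psi^{(0)}(\ell)\approx e^{-\ell}$.
So the leftover inequalities cannot be collected into \eqref{1642}--\eqref{1653}. You would have to shrink \eqref{d5}, which would also break Theorem \ref{main}.
Separately, your sample computation drops a factor $\Lambda$: with the first piece of $\epsilon$, $\Lambda\epsilon^2$ has exponent $3(a\mu-1)-2c\mu$, not $2(a\mu-1)-2c\mu$.

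\textbf{The missing idea.} Integrate the pointwise bounds \eqref{Q1p}--\eqref{Q1pp} against $|\nabla U(x-p)|$ while tracking which bump sits at $p$. For $p\in\Pi_1$:
\begin{itemize}
\item $\Lambda U_1^{a_1}|\phi_2|^{a_2}$ gives $\Lambda\|\phi\|_*^{a_2}$;
\item $\Lambda U_2^{a_2}|\phi_1|^{a_1}$ gives $\Lambda e^{-\bar\ell}\|\phi\|_*^{a_1}$;
\item $\Lambda U_1^{a_1-1}U_2^{a_2-1}|\phi_1||\phi_2|$ gives $\Lambda e^{-(a_2-1)\bar\ell}\|\phi\|_*^2$;
\item $\Lambda U_1^{a_1}U_2^{a_2-2}\phi_2^2$ gives $\Lambda e^{-(a_2-2)\bar\ell}\|\phi\|_*^2$; the remaining terms are handled the same way;
\item the coupling terms of $\mathscr L_1$ give $\Lambda e^{-(a_2-1)\bar\ell}\|\phi\|_*$.
\end{itemize}
Since $a=a_2$ by \eqref{0849} and $\Lambda\Psi^{(1)}(\bar\ell)\approx\Lambda e^{-a_2\bar\ell}$, every $\Lambda$-term is beaten once $\|\phi\|_*\ll e^{-\bar\ell}$, i.e.\ $\epsilon\ll e^{-\mu\ell}$. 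That is exactly $\mu(a+1-c)<1$ together with $\mu<\min\{{\mathfrak p}-1,1\}$, which is \eqref{1642}. In the example, $\Lambda\|\phi\|_*^{a_2}\approx e^{-1.05\ell}$, which is fine. The case $i=2$ is identical with $b=b_1$.

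\textbf{The $i=2$ linear term.} You do not need any symmetry cancellation for $p\in\Pi_2$. The factor ${\mathfrak p}U^{{\mathfrak p}-1}(x-p)-f'(U_2)$ involves only $\Pi_2$-bumps, which are $\rho_2=2\mu\cos\frac\pi k\,\ell+\mathcal O(1)$ apart, and \eqref{d5} gives $2\mu\cos\frac\pi k>1+(b-a)\mu$. This is what yields \eqref{1653}. Your crude distance $\ell$ would instead require the strictly stronger condition $2\min\{{\mathfrak p}-1,1\}-1>(b-a)\mu$ when ${\mathfrak p}<2$ and $b>a$.
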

%
\begin{proof}
 Since  $(-\Delta +1-{\mathfrak p} U^{{\mathfrak p}-1})(\partial_j U) =0 $ in $\mathbb{R}^N$, through an integration by parts notice that
$$\begin{aligned}
& \int_{\mathbb{R}^N} \mathscr L_i(\phi) \partial_j U(x-p) dx=
\int_{\mathbb{R}^N} \Big({\mathfrak p} U^{{\mathfrak p}-1} (x-p)-{\mathfrak p} U_i^{{\mathfrak p}-1}\Big)\phi_i \partial_j U (x-p)dx\\
&+\Lambda \cdot \left\{\begin{array}{ll}
a_1  \int_{\mathbb{R}^N} U_1^{a_1-1}U_2^{a_2}\phi_1 \partial_j U (x-p)dx
 +a_2  \int_{\mathbb{R}^N}  U_1^{a_1}U_2^{a_2-1}\phi_2 \partial_j U (x-p)dx &\hbox{if }i=1\\
b_2  \int_{\mathbb{R}^N}  U_1^{b_1}U_2^{b_2-1}\phi_2 \partial_j U (x-p)dx+
b_1  \int_{\mathbb{R}^N}  U_1^{b_1-1}U_2^{b_2}\phi_1 \partial_j U (x-p)dx&\hbox{if }i=2. \end{array} \right.
\end{aligned}$$ 
Since 
$$|U^{{\mathfrak p}-1} (x-p)-U_i^{{\mathfrak p}-1}|=\mathcal O\Big(
\sum_{q\in \Pi_i \setminus\{p\} }U^{{\mathfrak p}-1}(x-q)+ \underbrace{U^{{\mathfrak p}-2}(x-p) \sum_{q\in \Pi_i \setminus \{p\}}U(x-q)}_{\hbox{if }{\mathfrak p}\geq 2}\Big),$$
we have that
$$\int_{\mathbb{R}^N} \mathscr L_i(\phi) \partial_j U(x-p) dx
=\mathcal O\Big(  e^{-\min\{{\mathfrak p}-1,1\}\rho_i}+\underbrace{\Lambda e^{-(a-1)\rho}}_{\hbox{if }i=1}
+\underbrace{\Lambda e^{-(b-1)\rho}}_{\hbox{if }i=2}\Big)  \| \phi\|_* $$
in view of \eqref{1217}, \eqref{key} and $U_i^\gamma |\partial_j U(x-p)|=\mathcal O\Big(\displaystyle \sum_{q \in \Pi_i} U^{\gamma+1}(x-q) \Big)$. By \eqref{expansion}, \eqref{1217}  and Proposition \ref{nonlinear} we have that for some $\delta>0$ there hold:
\begin{itemize} \item $\mathcal O\Big(\underbrace{\Lambda e^{-(a-1)\rho}}_{\hbox{if }i=1}
+\underbrace{\Lambda e^{-(b-1)\rho}}_{\hbox{if }i=2}\Big)  \| \phi\|_* = \mathcal O\left(\Lambda \Psi^{(i)}(\bar\ell)e^{-\delta\bar\ell}\right)$ provided $\mu<\frac{1}{a+1-c}$ and 
\begin{equation} \label{1642}
 \mu<\min\{{\mathfrak p}-1,1\};
\end{equation}
\item $\mathcal O\Big(e^{-\min\{{\mathfrak p}-1,1\}\rho_1} \Big) \| \phi\|_* =\mathcal O(\Lambda \Psi^{(1)}(\bar\ell)e^{-\delta\bar\ell})$ provided ${\mathfrak p}>\frac{3}{2}$ and $\mu$ satisfies \eqref{1642} in view of $a-c<1$;
\item $\mathcal O\Big( e^{-\min\{{\mathfrak p}-1,1\}\rho_2} \Big) \| \phi\|_* =\mathcal O(\Lambda \Psi^{(2)}(\bar\ell)e^{-\delta\bar\ell})$ provided ${\mathfrak p}>\frac{3}{2}$  and $\mu$ satisfies
\begin{equation} \label{1653}
(b-a)\Big(1-\min\{{\mathfrak p}-1,1\}\Big)\mu< 2\min\{{\mathfrak p}-1,1\}-1
\end{equation}
in view of $2\mu \cos \frac{\pi}{k}>\mu(b-a)+1$.\\
\end{itemize}

Concerning $\mathscr Q$, we first consider the case $i=1$.  Since $\Lambda e^{-(c-1)\rho}\to 0$ and $\Lambda e^{-(a_2-2)\rho}\to +\infty$ as $\Lambda \to +\infty$ in view of \eqref{expansion}, \eqref{1217} and \eqref{d5}, by \eqref{Q1}-\eqref{Q1pp} we get that
$$\begin{aligned}
&\int_{\mathbb{R}^N}\mathscr Q_1(\phi)\partial_j U(x-p)\\
&=\mathcal O\Big(\|\phi\|_*^2+\Lambda e^{-\rho}\|\phi\|_*^{a_1}+\Lambda \|\phi\|_*^{a_2}+\Lambda e^{-(c-1)\rho}\|\phi\|_*^2+\Lambda e^{-(a_2-2)\rho}\|\phi\|_*^2\Big)\\&=\mathcal O\Big(\Lambda e^{-\rho}\|\phi\|_*^{a_1}+\Lambda \|\phi\|_*^{a_2}+\Lambda e^{-(a_2-2)\rho}\|\phi\|_*^2\Big).
\end{aligned}$$
By \eqref{expansion}, \eqref{1217}  and Proposition \ref{nonlinear} we have that 
\begin{eqnarray*}
\int_{\mathbb{R}^N}\mathscr Q_1(\phi)\partial_j U(x-p)=\mathcal O(\Lambda \Psi^{(1)}(\bar\ell)e^{-\delta\bar\ell})
\end{eqnarray*}
does hold for some $\delta>0$ provided  $\mu<\frac{1}{a+1-c}$ and $\mu$ satisfies \eqref{1642} (in view of $a<a_1+1$).
In a similar way,  we argue for the estimate of $\int_{\mathbb{R}^N}\mathscr Q_2(\phi)\partial_j U(x-p)$ with $p \in \Pi_2$.
\end{proof}

\subsection{Solving the reduced problem}
Since $\{\int_{\mathbb{R}^N} \mathscr M_i({\bf h})\nabla U(x-s)\ dx\}_{i=1,2, \ s \in \Pi_i}$,  provides an expression in ${\bf h}$ which is almost diagonal,  to have ${\bf h}=0$ it's enough to impose $\int_{\mathbb{R}^N} [\mathscr L_i(\phi)+\mathscr E_i+\mathscr Q_i(\phi)]\nabla U(x-s)\ dx=0$ for any $i=1,2$ and $s \in \Pi_i$ in view of \eqref{pro-pro}. 
By symmetry arguments and the expansions in Lemmas \ref{new-1},  \ref{new-2} and \ref{linear-rate}
we are lead to find
$(\boldsymbol\alpha,\boldsymbol\beta,\boldsymbol\gamma)\to 0$ as $\Lambda\to\infty$  which solve the non-linear system
\begin{equation}\label{fine1}\mathtt L(\boldsymbol\alpha,\boldsymbol\beta,\boldsymbol\gamma)=\mathtt E_\Lambda +\mathtt Q_\Lambda(\boldsymbol\alpha,\boldsymbol\beta,\boldsymbol\gamma)\end{equation}
where the linear operator $\mathtt L :\mathbb R^{m+1}\times\mathbb R^{n-1}\times\mathbb R^{n}\to \mathbb R^{m+1}\times\mathbb R^{n-1}\times\mathbb R^{n}$ is a small perturbation of
$$\mathtt L_0 (\boldsymbol\alpha,\boldsymbol\beta,\boldsymbol\gamma)=
\left(
\begin{aligned}
&  \alpha_2-2\alpha_1+\left(1 -2 \sin\frac\pi k\right) \alpha_1   \\
&\quad\vdots\\
& \alpha_{i+1}-2\alpha_i+\alpha_{i-1}\\
&\quad\vdots\\
&  \alpha_{m}-2\alpha_{m+1}+a_2 \beta_1+\left(1+a_2\sin\frac\pi k \right)\alpha_{m+1} -\mathfrak c_2 \gamma_1 \\
& \beta_2-2\beta_1-\sin\frac\pi k \alpha_{m+1} \\
&\quad\vdots\\
& \beta_{h+1}- 2\beta_{h}+\beta_{h-1} \\
&\quad\vdots\\
&  \beta_{n-2}-2\beta_{n-1} \\
&\gamma_2-2\gamma_1 \\
&\quad\vdots\\ &\gamma_{h+1}-2\gamma_{h}+\gamma_{h-1}  \\
&\quad\vdots\\
&2 \gamma_{n-1}-2\gamma_{n} \\
\end{aligned}\right).$$
in view of  \eqref{symme} and \eqref{ba2}. The error term $\mathtt E_\Lambda:\mathbb R^{m+1}\times\mathbb R^{n-1}\times\mathbb R^{n}\to \mathbb R^{m+1}\times\mathbb R^{n-1}\times\mathbb R^{n}$ is a  continuous function in $(\boldsymbol\alpha,\boldsymbol\beta,\boldsymbol\gamma)$ so that $\mathtt E_\Lambda(\boldsymbol\alpha,\boldsymbol\beta,\boldsymbol\gamma)=\mathcal O(e^{-\delta \ell^\frac{1}{4}})$ as 
$ \Lambda\to\infty$, uniformly with respect to  $(\boldsymbol\alpha,\boldsymbol\beta,\boldsymbol\gamma)$ in bounded sets.
The non-linear term $\mathtt Q_\Lambda:\mathbb R^{m+1}\times\mathbb R^{n-1}\times\mathbb R^{n}\to \mathbb R^{m+1}\times\mathbb R^{n-1}\times\mathbb R^{n}$ is a continuous function in $(\boldsymbol\alpha,\boldsymbol\beta,\boldsymbol\gamma)$ so that 
$$\mathtt Q_\Lambda(\boldsymbol\alpha,\boldsymbol\beta,\boldsymbol\gamma)=\mathcal O\Big(\ell^K [|\alpha| +|\beta| +|\gamma| ]^{ \theta}\Big)$$
for some $1<\theta\leq 2$, uniformly with respect to  $(\boldsymbol\alpha,\boldsymbol\beta,\boldsymbol\gamma)$ in bounded sets. \\

If the operator $\mathtt L_\Lambda$  is invertible,  we can use  a simple contraction argument to solve \eqref{fine1} with $(\boldsymbol\alpha,\boldsymbol\beta,\boldsymbol\gamma)=\mathcal O(e^{-\delta \ell^\frac{1}{4}})$.  Since 
$\mathtt L_\Lambda=\mathtt L_0+o(1)$,  it is therefore enough to prove that the linear problem
$$ \left\{\begin{aligned}
&  \alpha_2-2\alpha_1+\left(1 -2 \sin\frac\pi k\right) \alpha_1 =0 \\
& \alpha_{i+1}-2\alpha_i+\alpha_{i-1}=0,\ 
 \hbox{if}\ i=2, \dots, m \\
& \alpha_{m}-2\alpha_{m+1}+a_2 \beta_1+\left(1+a_2\sin\frac\pi k \right)\alpha_{m+1} -\mathfrak c_2 \gamma_1=0\\
& \beta_2-2\beta_1-\sin\frac\pi k \alpha_{m+1}=0\\
& \beta_{h+1}- 2\beta_{h}+\beta_{h-1}=0,\   \hbox{if}\ h=2, \dots, n-2 \\
&   \beta_{n-2}-2\beta_{n-1}=0\\
&\gamma_2-2\gamma_1=0 \\
&\gamma_{h+1}-2\gamma_{h}+\gamma_{h-1}=0 ,\    \hbox{if}\ h=2, \dots, n-1 \\
& 2\gamma_{n-1}-2\gamma_{n}=0
\end{aligned}\right.$$
has only the trivial solution. Since $\gamma_{n}-\gamma_{n-1}=0$ and $\gamma_{h+1}-\gamma_h=\gamma_h -\gamma_{h-1}$ for $h=2,\dots,n-1$,  we have that $\gamma_1=\gamma_2-\gamma_1=0$ and then $\gamma_i=0$ for any $i=1,\dots,n.$  Let us solve in $\alpha_i$ and $\beta_i$. 
 Given $M\ge2$ we introduce the $M\times M$ matrix
\begin{equation}\label{tn}T_M:=\left(\begin{matrix}
&-2&1&0&\cdots&0\\
&1&-2&1&\cdots&0\\
&0&1&\ddots&\vdots&0\\
&\vdots&\vdots&1&-2&1\\
&0&\cdots&0&1&-2\\
\end{matrix}\right).\end{equation}
It is easy to check that
\begin{equation}\label{tnt}T^{-1}_M\left(\begin{matrix}
1\\
 0\\
\vdots \\
0 \\
0 \\
\end{matrix}\right)=-\frac1{M+1}\left(\begin{matrix}
M\\
 M-1\\
\vdots \\
2 \\
1 \\
\end{matrix}\right)\quad \hbox{and}\quad T^{-1}_M\left(\begin{matrix}
0\\
 0\\
\vdots \\
0 \\
1 \\
\end{matrix}\right)=-\frac1{M+1}\left(\begin{matrix}
1\\
 2\\
\vdots \\
M-1 \\
M \\
\end{matrix}\right)\end{equation}Using \eqref{tn}  we can    rewrite the equations as
$$T_{m+1}\left(\begin{matrix}
\alpha_1\\
\vdots \\
\alpha_{m+1} \\
\end{matrix}\right)=-\left(\begin{matrix}
\left(1 -2 \sin\frac\pi k\right) \alpha_1\\
0\\
\vdots \\
0\\
a_2 \beta_1+\left(1+a_2\sin\frac\pi k \right)\alpha_{m+1} \\
\end{matrix}\right)\ \hbox{and}\ T_{n-1}\left(\begin{matrix}
\beta_1\\
\vdots \\
\beta_{n-1} \\
\end{matrix}\right)=\left(\begin{matrix}
\sin\frac\pi k \alpha_{m+1}\\
0\\
\vdots \\
0\\
0\\
\end{matrix}\right)$$
 
By \eqref{tnt} applied to the matrix $T_{n-1}$ it follows that
\begin{equation} \label{183511}
\beta_1=-\frac{n-1}n\sin\frac\pi k \alpha_{m+1}
\end{equation}
and then,  by \eqref{tnt} applied to the matrix $T_{m+1}$,  we get that 
$$\left\{\begin{aligned}&\left(1-2\sin\frac\pi k\right)(m+1)\alpha_1+\left(1+\frac{a_2}n\sin\frac\pi k \right)\alpha_{m+1}=(m+2)\alpha_1\\
&\left(1-2\sin\frac\pi k\right)\alpha_1+\left(1+\frac{a_2}n\sin\frac\pi k \right)(m+1)\alpha_{m+1}=(m+2)\alpha_{m+1}\\\end{aligned}\right.$$
in view of \eqref{183511}. Since the matrix
$$ A:=\left(\begin{matrix}
\left(1-2\sin\frac\pi k\right)(m+1)-(m+2)&\left(1+\frac{a_2}n\sin\frac\pi k \right)\\
 & \\
 \left(1-2\sin\frac\pi k\right)&\left(1+\frac{a_2}n\sin\frac\pi k \right)(m+1)-(m+2)\\
\end{matrix}\right) 
$$
satisfies
$$\hbox{det }A=(m+2) \sin \frac{\pi}{k}\Big[2-\frac{a_2}{n}(2m \sin \frac{\pi}{k}+1)\Big]=
2(m+2) \sin \frac{\pi}{k}(1- a \mu )$$
in view of \eqref{0849} and \eqref{drate},  we deduce that the matrix $A$ is invertible thanks to \eqref{d5} and then $\alpha_1=\alpha_{m+1}=0$. Once $\alpha_{m+1}=0$, we have  that $-\beta_{n-1}=\beta_{n-1}-\beta_{n-2}=\dots=\beta_2-\beta_1=\beta_1$ and then $\beta_h=-h \beta_{n-1}$ for all $h=1,\dots,n-1$, which implies $\beta_1=\dots=\beta_{n-1}=0$.  Since $\alpha_1=\alpha_{m+1}=0$, we also easily deduce that $\alpha_1=\dots=\alpha_{m+1}=0$.

\appendix \section{Proof of (\ref{us1})-(\ref{us2}) and (\ref{pint1})-(\ref{pint2}) }\label{app1}
Let $\gamma>1$.  Since
$$(1+t)^{\gamma}=1+t^\gamma+\mathcal O(t+t^{\gamma-1})=1+\gamma t+t^\gamma+\mathcal O(t^{\alpha_1}+t^{\alpha_2})$$ 
holds uniformly in $t \geq 0$ and for suitable $1<\alpha_1 \leq \alpha_2<\gamma$ (with $\alpha_1 \leq 2$ and $\alpha_2 \geq \gamma-1$ if $\gamma\geq 2$),
we deduce the validity of 
$$(t_1+t_2)^{\gamma}=t_1^{\gamma}+t_2^{\gamma}+\mathcal O(t_1^{\gamma-1} t_2+t_1 t_2^{\gamma-1})=t_1^{\gamma}+\gamma t_1^{\gamma-1}t_2+t_2^{\gamma}+\mathcal O(t_1^{\gamma-\alpha_1}t_2^{\alpha_1}+t_1^{\gamma-\alpha_2} t_2^{\alpha_2})$$  
uniformly for $t_1,t_2\geq 0$, which by iteration implies
\begin{eqnarray} \label{710}
(\sum_i t_i)^{\gamma}=\sum_i t_i^{\gamma}+\mathcal  O\Big(\sum_{i\not=j}t_i^{\gamma-1}t_j\Big)  
\end{eqnarray}
and
\begin{eqnarray} 
&&(\sum_i t_i)^{\gamma}=
t_1^{\gamma}+\gamma t_1^{\gamma-1} \sum_{i\not=1} t_i+(\sum_{i\not=1} t_i)^{\gamma}+\mathcal  O\Big(t_1^{\gamma-\alpha_1} (\sum_{i\not=1} t_i)^{\alpha_1} +t_1^{\gamma-\alpha_2} (\sum_{i\not=1} t_i)^{\alpha_2}\Big) \nonumber \\
&&=\sum_i t_i^{\gamma}+\gamma t_1^{\gamma-1} \sum_{i\not=1} t_i+\mathcal  O\Big( \sum_{j=1}^2  \sum_{i \not=1}  t_1^{\gamma-\alpha_j}  t_i^{\alpha _j}+\sum_{i, j \not=1 \atop i\not=j}t_i^{\gamma-1}t_j\Big) 
\label{711}
\end{eqnarray}
uniformly for nonnegative $t_i$'s in view of \eqref{710}.\\

First, let us prove \eqref{us1}-\eqref{us2}.  Given $s\in\Pi_1$, since
$$U_1^{a_1}=U^{a_1}(x-s)+\mathcal O\Big(U^{a_1-1}(x-s)\sum_{p\in\Pi_1\atop p\neq s} U(x-p)+\sum_{p\in\Pi_1\atop p\neq s} U^{a_1}(x-p)\Big)$$ 
and 
$$\begin{aligned}U_2^{a_2}&=\Big(\sum_{q \in \Pi_2^s} U(x-q)\Big)^{a_2}+\mathcal O\Big(\sum_{q \in \Pi_2 \setminus \Pi_2^s} U^{a_2}(x-q)\Big)\\ 
&+\mathcal O\Big(\sum_{q \in \Pi_2^s,t \in \Pi_2 \setminus \Pi_2^s}U^{a_2-1}(x-q)  U(x-t)+U(x-q)  U^{a_2-1}(x-t) \Big)\end{aligned}$$
in view of \eqref{710}, by $|\nabla U|\leq c U$ we have that
\begin{eqnarray}
&&\int_{\mathbb{R}^N} {\mathscr O}_1(x)\nabla U(x -s) dx =\Lambda {\boldsymbol\Psi}^{(1)}(s)
+ \mathcal O\Big(\Lambda \sum_{q \in\Pi_2 \setminus \Pi_2^s}\int_{\mathbb R^N}U^{a_1+1}(x-s)U^{a_2}(x-q)\Big) 
 \nonumber\\
&&+ \mathcal O\Big(\Lambda \sum_{q \in \Pi_2^s,t \in \Pi_2 \setminus \Pi_2^s}\int_{\mathbb R^N}U^{a_1+1}(x-s)[U^{a_2-1}(x-q)U(x-t)+U(x-q)U^{a_2-1}(x-t)]\Big) \nonumber\\
&&+\mathcal O\Big(\Lambda \sum_{p\in\Pi_1\atop p\neq s}\sum_{q\in\Pi_2}\int_{\mathbb R^N}[U^{a_1}(x-s)U(x-p)+U(x-s) U^{a_1}(x-p)]U^{a_2}(x-q)\Big) \label{1702}
\end{eqnarray} 
where ${\mathscr O}_1$ is given by \eqref{1026}.  Since \eqref{1217} implies
\begin{equation} \label{1710}
|r-q| \geq \left\{ \begin{array}{ll}\bar \ell+\mathcal O(1)&\hbox{if }q \in \Pi_2\\
(1+\delta) \bar \ell & \hbox{if }q \in \Pi_2 \setminus \Pi_2^r \end{array}\right., \
|s-p| \geq \left\{ \begin{array}{ll}(1+\delta) \bar \ell&\hbox{if }s,p \in \Pi_1^q\\
\bar \ell +\mathcal O(1)& \hbox{otherwise}\end{array}\right.
\end{equation}
for some $\delta>0$ when $r,s,p \in \Pi_1$, $s \not=p$ and $q \in \Pi_2$, by \eqref{0849} and \eqref{key} we have that 
\begin{eqnarray}
&& \int_{\mathbb R^N} U^{a_1+1}(x-s) U^{a_2}(x-q)\, dx=\mathcal O(e^{-a_2|s-q|}) \nonumber \\
& &=\mathcal O(e^{-(1+\delta)a_2 \bar \ell })\hbox{ if }q  \in \Pi_2 \setminus \Pi_2^s \label{1659}\\
&& \int_{\mathbb R^N} U^{a_1+1}(x-s)[U^{a_2-1}(x-q)U(x-t)+U(x-q)U^{a_2-1}(x-t)]\, dx \nonumber \\
&& =\mathcal O(e^{-(a_2-1)|s-q|-|s-t|}+e^{-(a_2-1)|s-t|-|s-q|})\nonumber \\
&& =\mathcal O(e^{-(1+\delta)a_2 \bar \ell })\hbox{ if }q\in \Pi_2^s, t  \in \Pi_2 \setminus \Pi_2^s \label{1700}\\
&&  \int_{\mathbb R^N}[U^{a_1}(x-s) U(x-p)+U(x-s) U^{a_1}(x-p)]U^{a_2}(x-q)\, dx \nonumber\\
&& =\mathcal O(e^{-(a-1)|s-q|-\delta_0|s-p|-(1-\delta_0)|p-q|}
+e^{-(a-1)|p-q|-\delta_0|s-p|-(1-\delta_0)|s-q|})\nonumber \\&& =\mathcal O(e^{-(1+\delta)a_2 \bar \ell })\label{1701}
\end{eqnarray}
hold for some $\delta>0$, where $0<\delta_0<\min\{1, a_1+1-a_2\}$.  In the last estimate,  notice we have used that $s \in \Pi_1 \setminus \Pi_1^q$ is equivalent to $q \in \Pi_2 \setminus \Pi_2^s$. Inserting \eqref{1659}-\eqref{1701} into \eqref{1702} we deduce the validity of  \eqref{us1}. In a similar way, replacing \eqref{1710} with the property
$$|r-q| \geq \left\{ \begin{array}{ll}\bar \ell+\mathcal O(1)&\hbox{if }q \in \Pi_1\\
(1+\delta) \bar \ell & \hbox{if }q \in \Pi_1 \setminus \Pi_1^r \end{array}\right., \
|s-p| \geq \left\{ \begin{array}{ll}(1+\delta) \bar \ell&\hbox{if }s,p \in \Pi_2^q\\
\bar \ell +\mathcal O(1)& \hbox{otherwise}\end{array}\right.$$
for some $\delta>0$ when $q \in \Pi_1$ and $r,s,p \in \Pi_2$, $s \not=p$, we can establish the validity of  \eqref{us2} too. \\

Now,  let us establish (\ref{pint1})-(\ref{pint2}).  Since $|\nabla U|\leq c U$, we can use \eqref{711} with $t_1=U(x-s)$, $\{t_i\}_{i \not=1}=\{U(x-p)\}_{p \in \Pi_1, p\not=s}$, $\gamma={\mathfrak p}$ to get
\begin{eqnarray}
&&\int_{\mathbb{R}^N} {\mathscr I}_1 \nabla U(x-s) \, dx=-\int_{\mathbb{R}^N}\left[\Big(\sum_{p\in\Pi_1}U(x-p)\Big)^{{\mathfrak p}}-\sum_{p\in\Pi_1}U^{{\mathfrak p}}(x-p)\right] \nabla U(x-s)\, dx \nonumber\\
&&=-\frac{1}{{\mathfrak p}}\sum_{p\in\Pi_1\atop p\neq s} \int_{\mathbb R^N} U(x-p) \nabla U^{{\mathfrak p}}(x-s)+\mathcal  O\Big(\sum_{j=1}^2 \sum_{p\in\Pi_1\atop p\neq s}\int_{\mathbb R^N} U^{{\mathfrak p}+1-\alpha_j}(x-s)U^{\alpha_j} (x-p)\, dx\Big)\nonumber \\
&&+\mathcal O\Big(\sum_{p,q\in\Pi_1 \setminus \{s\}\atop p\not=q}\int_{\mathbb R^N} U(x-s) U^{{\mathfrak p}-1}(x-p)U(x-q)\, dx\Big), \label{1736}
\end{eqnarray} 
where ${\mathscr I}_1$ is given by \eqref{1026}. Since $\Pi_1$ cannot contain three distinct points with all mutual distances  of minimal order $\ell+O(1)$ (see \eqref{1217}), by \eqref{1623}-\eqref{1117} we have that,  if $p \in \Pi_1 \setminus \{s\}$ and $q\in\Pi_1^s$, there hold
\begin{equation} \label{1746}|p-s| \geq \left\{ \begin{array}{ll}\ell+\mathcal O(1)&\hbox{if }p\in \Pi_1\\
(1+\delta) \ell & \hbox{if }p\notin \Pi_1^s \end{array}\right., \
\max\{|p-s|,|p-q| \} \geq (1+\delta)\ell \hbox{ for }p \not= q
\end{equation} 
for some $\delta>0$. By \eqref{key} and \eqref{1746} we deduce that for some $\delta>0$ there hold
\begin{eqnarray}
&&\hspace{-1.7cm} \int_{\mathbb R^N} U(x-p) \nabla U^{{\mathfrak p}}(x-s)\, dx=\mathcal O(e^{-|p-s|})=\mathcal O(e^{-(1+\delta)\ell })\hbox{ if }p  \in \Pi_1 \setminus \Pi_1^s \label{1800}\\
&&\hspace{-1.7cm} \int_{\mathbb R^N} U^{{\mathfrak p}+1-\alpha_j}(x-s)U^{\alpha_j}(x-p)\, dx
=\mathcal O(e^{-(1+\delta)|s-p|})=\mathcal O(e^{-(1+\delta)\ell })\label{1801} \\
&&\hspace{-1.7cm} \int_{\mathbb R^N} U(x-s) U^{{\mathfrak p}-1}(x-p)U(x-q)\, dx=\mathcal O(e^{-|s-q|})=\mathcal O(e^{-(1+\delta) \ell})\hbox{ if }q \in \Pi_1 \setminus \Pi_1^s\label{1802}
\end{eqnarray}
thanks to $\min\{{\mathfrak p}+1-\alpha_j,\alpha_j\}>1$ and
\begin{eqnarray}
\int_{\mathbb R^N} U(x-s) U^{{\mathfrak p}-1}(x-p)U(x-q)\, dx&=&\mathcal O(e^{-\delta_0 \max\{|p-s|,|p-q|\}
-(1-\delta_0)|s-q|})\nonumber \\
&=& \mathcal O(e^{-(1+\delta) \ell})
\label{1953}
\end{eqnarray}
if $q \in \Pi_1^s$ and $p\not= q,s$, where $0<\delta_0<\min\{1,\frac{{\mathfrak p}-1}{2}\}$. Inserting \eqref{1800}-\eqref{1953} into \eqref{1736} we deduce the validity of \eqref{pint1}. Since \eqref{1746} is valid for $\Pi_2$ with $\ell$ replaced by the minimal distance $2\mu \cos \frac{\pi}{k} \ell$ in $\Pi_2$ (see \eqref{1217}),  in a similar way we can establish the validity of \eqref{pint2}.

\section*{Acknowledgments}
Pierpaolo Esposito acknowledges support by INDAM-GNAMPA and by the project PNRR-M4C2-I1.1-PRIN 2022-PE1-Variational and Analytical aspects of Geometric PDEs-F53D23002690006 - Funded by the U.E.-NextGenerationEU.
 Angela Pistoia acknowledges support of INDAM-GNAMPA project “Problemi di doppia curvatura su variet\`a a bordo e legami con le EDP di tipo ellittico” and of the project “Pattern formation in nonlinear phenomena” funded by the MUR Progetti di Ricerca di Rilevante Interesse Nazionale (PRIN) Bando 2022 grant 20227HX33Z. 
Giusi Vaira acknowledges support of 
the MUR-PRIN-P2022YFAJH ``Linear and Nonlinear PDE’s: New directions and Applications" and the support of the INdAM-GNAMPA project ``Fenomeni non lineari: problemi locali e non locali e loro applicazioni" CUP E5324001950001.
Pablo Figueroa acknowledges support by ANID (Chile) through the project FONDECYT 1201884. Part of this work was carried out while P. F. was visiting the Dipartimento di Matematica e Fisica, Universit\`a degli Studi Roma Tre, Roma, Valdivia. He would like to thank Professors Pierpaolo Esposito and Angela Pistoia for their warming hospitality and support.
\\

\bibliography{system.bib}
\bibliographystyle{abbrv}
 \end{document}